  \newtheorem{theorem}{Theorem}[section]
  \newtheorem{lemma}[theorem]{Lemma}
  \theoremstyle{definition}
  \newtheorem{definition}[theorem]{Definition}
  \newtheorem{remark}[theorem]{Remark}
  \numberwithin{equation}{section}
  \title[Infinite dimensional mixed economies with asymmetric
  information]
  {Infinite dimensional mixed economies\\ with asymmetric
  information}
  \author[A. Bhowmik]{Anuj Bhowmik}
  \address{School of Computing and Mathematical Sciences,
  Auckland University of Technology, Private Bag 92006, Auckland
  1142, New Zealand}
  \email{anuj.bhowmik@aut.ac.nz}
  \author[J. Cao]{Jiling Cao}
  \address{School of Computing and Mathematical Sciences,
  Auckland University of Technology, Private Bag 92006, Auckland
  1142, New Zealand}
  \email{jiling.cao@aut.ac.nz}
  \thanks{\hspace{-1.66em} \emph{JEL classification:} C71; D41;
  D43; D51; D82.}
  \thanks{\noindent \emph{Keywords.} Asymmetric information;
  Exactly feasible; Ex-post core; mixed economy; $NY$-fine
  core; $NY$-private core; Robustly efficient allocation;
  $NY$-strong fine core; $RW$-fine core; Walrasian expectations
  allocation.}
  \date{}
\begin{document}

  \maketitle

  \begin{abstract}
  In this paper, we study asymmetric information economies
  consisting of both non-negligible and negligible agents
  and having ordered Banach spaces as their commodity spaces.
  In answering a question of Herv\'{e}s-Beloso and
  Moreno-Garc\'{i}a in \cite{Herves-Beloso-Moreno-Garcia:08},
  we establish a characterization of Walrasian expectations
  allocations by the veto power of the grand coalition. It is
  also shown that when an economy contains only negligible
  agents a Vind's type theorem on the private core with the
  exact feasibility can be restored. This solves a problem
  of Pesce in \cite{Pesce:10}.
  \end{abstract}

  \section{Introduction} \label{sec:intro}
  In their seminal papers \cite{Arrow-Debreu:54} and
  \cite{McKenzie:59}, Arrow, Debreu and McKenzie considered an
  economic model consisting of finitely many agents. Since only
  finitely many coalitions can be formed in such an economy,
  the characterization of Walrasian allocations by the veto
  mechanism is asymptotic \cite{Debreu-Scarf:63}. Later,
  Aumann \cite{Aumann:64} considered an economic model
  consisting of a continuum of agents by taking [0,1] with
  Lebesgue measure as the space of agents and established
  a characterization of Walrasian allocations in terms of
  the core. The main advantage of Aumann's model is that
  perfect competition prevails, that is, the influence of
  any individual agent on the economy is negligible. However,
  the competition in many real economies is imperfect, for
  instance, in an economy which has some individual agents who
  own large portions of initial endowments of some commodities.
  This is the main motivation to consider mixed economies or
  oligopolistic markets, refer to \cite{De Simone-Graziano:03},
  \cite{Greenberg-Shitovitz:86}, \cite{Pesce:10}, and
  \cite{Shitovitz:73}.
  In Chapter 7 of \cite{Debreu:59}, uncertainty was introduced
  in the Arrow-Debreu-McKenzie model by allowing finitely many
  states of nature and viewing the commodities as differentiated
  by state. In this model, each agent possesses the same full
  information and makes a contract contingent on the realized
  state of nature. However, such a model does not capture the
  idea of contracts under asymmetric information. This analysis
  was extended by Radner in \cite{Radner:68}, where each agent
  is characterized by a private information set, a
  state-dependent utility function, a random initial endowment
  and a prior belief. The trade of an agent is measurable with
  respect to his information so that he cannot act differently
  on states that he cannot distinguish and an agent makes a
  contract for trading commodities before he obtains any
  information about the realized state of nature. Radner also
  extended the notion of a Walrasian equilibrium in the
  Arrow-Debreu-McKenzie model to that of a Walrasian expectations
  equilibrium in his model so that better informed agents are
  generally better off.

  In this paper, we consider a mixed economy with asymmetric
  information and infinitely many commodities. In Section
  \ref{sec:model}, we provide a general description on our model.
  Section \ref{sec:atomless} is devoted to study a special case
  of our model, where the space of agents is an atomless measure
  space. Two results on the private blocking power of a coalition
  are established, and measures of blocking coalitions when
  agents are asymmetrically informed are studied. Schmeidler
  \cite{Schmeidler:72} first improved Aumann's equivalence
  result by only considering the blocking power of small
  coalitions in a complete information and atomless economy
  with finitely many commodities. Schmeidler's result was further
  generalized in Grodal \cite{Grodal:72}. Finally, Vind \cite{Vind:72}
  showed that if some coalition blocks an allocation then there
  is a blocking coalition with any measure less than the measure
  of the grand coalition. Although Herv\'{e}s-Beloso et al.
  \cite{Herves-Beloso-Moreno-Garcia-Nunez-Sanz-Pascoa:00}
  pointed out that analogous results of Vind's theorem are
  generally false for an atomless economy with the space of
  real bounded sequences as the commodity space, extensions of
  Vind's theorem for special economies with asymmetric
  information and the free disposal condition can be found in
  \cite{Bhowmik-Cao:submitted},
  \cite{Herves-Beloso-Moreno-Garcia-Yannelis:05a} and
  \cite{Herves-Beloso-Moreno-Garcia-Yannelis:05b}. Recently,
  Herv\'{e}s-Beloso et al. \cite{Herves-Beloso:prepint} established
  a Vind's type theorem for the process of information shared by
  coalitions in an asymmetric information economy having a finite
  dimensional commodity space and the free disposal assumption.
  Considering an ordered Banach space whose positive cone admitting
  an interior point as the commodity space and a complete finite
  positive atomless measure space of agents, Evren and
  H\"{u}sseinov \cite{Evern-Husseinov:08} established a Vind's
  type result on the private core of an economy under the free
  disposal condition and other additional assumptions.
  However, as mentioned in \cite{Pesce:10}, whether there is a
  version of Vind's theorem on the private core of an economy
  with the exact feasibility for finite dimensional economies
  is still an open problem. Here, we investigate this question
  for an asymmetric information economy with an ordered Banach
  space whose positive cone has an interior point as the
  commodity space and give a full solution. As a result, the
  equivalence theorem for finite dimensional economies in
  \cite{angeloni:09} is further generalized. The corresponding
  problems on the (strong) fine core of an economy are also
  considered.

  Concerning a complete information economy, Herv\'{e}s-Beloso
  and Mareno-Garc\'{i}a \cite{Herves-Beloso-Moreno-Garcia:08}
  provided a characterization of Walrasian allocations by
  robustly efficient allocations when the economy has a continuum
  of agents and finitely many commodities. More precisely, if
  $f$ is a Walrasian allocation then it is non-dominated in not
  only the initial economy but also all economies obtained by
  modifying the initial endowments of any coalition in the
  direction of $f$. In the same paper, they also showed that
  such a result holds for economies with asymmetric information
  and the space of real bounded sequences as the commodity space.
  In Section \ref{sec:mixed}, a similar result is established
  in an asymmetric information economy whose space of agents
  is a complete finite positive measure space and commodity space
  is an ordered separable Banach space whose positive cone has
  an interior point. Other results in Section \ref{sec:mixed}
  concern the relationships among different types of cores.
  Einy et al. \cite{Einy-Moreno-Shitovitz:00} showed that the
  fine core is a subset of the ex-post core for an asymmetric
  information economy with an atomless measure space of agents
  and a finite dimensional commodity space. One year later,
  they established a characterization of the weak fine core by
  the private core in a complete information economy in
  \cite{Einy-Moreno-Shitovitz:01}, where it was assumed that
  the grand coalition is a finite union of pairwise disjoint
  measurable subsets having positive measure and any
  two agents in the same measurable subset have the same
  information. Here, these results are extend to mixed
  economies with asymmetric information and ordered separable
  Banach spaces whose positive cones contain interior points as
  commodity spaces. Furthermore, in our framework there may exist
  an information type associated with a null measurable subset of
  the grand coalition.

  \section{The model} \label{sec:model}
  Let $\mathcal{E}$ be an exchange economy with asymmetric
  information as in \cite{Radner:68} and \cite{Radner:82}. Suppose
  that $(\Omega, \mathcal{F})$ is a measurable space, where $\Omega$
  is a finite set denoting all possible states of nature and the
  $\sigma$-algebra $\mathcal{F}$ denotes all events. Following
  from the well-known mixed market model, the space of agents is
  a measure space $(T,\Sigma,\mu)$ with a complete, finite and
  positive measure $\mu$, where $T$ is the set of agents, $\Sigma$
  is the $\sigma$-algebra of measurable subsets of $T$ whose economic
  weights on the market are given by $\mu$. Following from a
  classical result in measure theory, $T$ can be decomposed into
  two parts: one is atomelss and the other contains countably
  many atoms. That is, $T=T_0\cup T_1$, where $T_0$ is the atomless
  part and $T_1$ is the countable union of $\mu$-atoms. Since each
  $\mu$-atom is treated as an agent, $A\in T_1$ is used instead
  of $A\subseteq T_1$ if $A$ is a $\mu$-atom. Agents in $T_0$ are
  called ``\emph{small agents}" and those in $T_1$ are called
  ``\emph{large agents}". In each state, infinitely many
  commodities are assumed. Throughout, the commodity space of
  $\mathcal{E}$ is an ordered Banach space $Y$ whose positive
  cone has an interior point. The order on $Y$ is denoted by
  $\leq$, and $Y_+ =\{x\in Y: x \geq 0\}$ denotes the positive
  cone of $Y$. The symbol $x\gg 0$ (resp. $x> 0$) denotes a
  strictly positive (resp. non-zero positive) element $x$ of $Y_+$.
  The economy extends over two time periods $\tau = 0, 1$.
  Consumption takes place at $\tau= 1$. At $\tau= 0$, there is
  uncertainty over the states and agents make contracts that are
  contingent on the realized state at $\tau= 1$. Thus,
  ${\mathcal E}$ can be defined by
  \[
  {\mathcal E} = \{(\Omega, \mathcal{F});\ (T,\Sigma,\mu);\ Y_+;\
  ({\mathcal F}_t, U_t, a(t, \cdot), q_t)_{t\in T}\}.
  \]
  Here, $Y_+$ is the \emph{consumption set} in every state
  $\omega \in \Omega$ for every agent $t\in T$; ${\mathcal F}_t$
  the $\sigma$-algebra generated by a partition $\Pi_t$ of
  $\Omega$ representing the \emph{private information}
  of agent $t$; $U_{t}:\Omega \times Y_+\to\mathbb R$ is the
  \emph{state-dependent utility function} of agent $t$; $a(t,
  \cdot): \Omega\rightarrow Y_+$ is the \emph{random initial
  endowment} of agent $t$, assumed to be constant on elements
  of $\Pi_t$; and $q_t$ is a probability measure on $\Omega$
  giving the \emph{prior} of agent $t$. It is assumed that $q_t$
  is positive on all elements of $\Omega$. The quadruple
  $({\mathcal F}_t, U_t, a(t, \cdot), q_t)$ is called the
  \emph{characteristics} of the agent $t\in T$. A function $x:
  \Omega\to Y_+$ is interpreted as a random consumption bundle
  in $\mathcal E$. The \emph{ex ante expected utility} of an
  agent $t$ for a given random consumption bundle $x$ is defined
  by $V_{t}(x)= \sum_{\omega\in \Omega} U_{t} (\omega,x)q_t(\omega)$.

  Any set $S \in \Sigma$ with $\mu(S)> 0$ is called a \emph{coalition}
  of $\mathcal E$.  If $S$ and $S'$ are two coalitions of $\mathcal
  E$ with $S' \subseteq S$, then $S'$ is called a \emph{sub-coalition}
  of $S$. For a coalition $S$ in $\mathcal E$, an
  \emph{$S$-assignment} in $\mathcal E$ is a function $f: S\times
  \Omega\rightarrow Y_+$ such that $f(\cdot, \omega)\in L_1^S (\mu,
  Y_+)$ for all $\omega \in \Omega$, where $L_1^S (\mu, Y_+)$ is
  the set of all Bochner integrable functions from $S$ into $Y_+$.
  It is assumed that $a(\cdot,\omega)\in L_1^T(\mu, Y_+)$ for
  each $\omega \in \Omega$. Put $L_t = \{ x\in (Y_+)^\Omega:
  x \mbox{ is }\mathcal{F}_{t} \mbox{-measurable}\}$. An $
  S$-assignment $f$ in $\mathcal{E}$ is called an
  \emph{$S$-allocation} if $f(t,\cdot)\in L_t$ for almost all
  $t\in S$, and it is said to be \emph{$S$-feasible} if $\int_S
  f(\cdot, \omega) d\mu\leq \int_S a(\cdot, \omega)d\mu$ for all
  $\omega \in \Omega$. $T$-assignments, $T$-allocations and
  $T$-feasible allocations are simply called \emph{assignments},
  \emph{allocations} and \emph{feasible allocations}. A coalition
  $S$ \emph{privately blocks an allocation} $f$ in $\mathcal E$
  if there is an $S$-feasible allocation $g$ such that $V_t(g(t,
  \cdot))> V_t(f(t,\cdot))$ for almost all $t\in S$. The
  \emph{private core} of $\mathcal E$ is the set of all feasible
  allocations which are not privately blocked by any coalition.
  A \emph{price system} is an $\mathcal{F}$-measurable,
  non-zero function $\pi:\Omega\to Y_+^\ast$, where $Y_+^\ast$
  is the positive cone of the norm-dual space $Y^\ast$ of $Y$.
  The \emph{budget set} of agent $t$ can be defined by
  \[
  B_{t}(\pi) = \left\{ x\in L_t:\sum_{\omega\in
  \Omega} \langle\pi(\omega), x(\omega)\rangle\leq
  \sum_{\omega\in \Omega}\langle\pi(\omega), a(t, \omega)
  \rangle\right\}.
  \]
  A \emph{Walrasian expectations equilibrium} of
  $\mathcal{E}$ in the sense of Radner  is a pair $(f, \pi)$,
  where $f$ is a feasible allocation and $\pi$ is a price
  system such that for almost all $t\in T$, $f(t, \cdot)\in B_t
  (\pi)$ and $f(t, \cdot)$ maximizes $V_t$ on $B_t(\pi)$, and
  \[
  \sum_{\omega\in \Omega} \left \langle\pi(\omega),\
  \int_T f(\cdot, \omega) d\mu \right\rangle=
  \sum_{\omega\in \Omega} \left \langle\pi(\omega),\
  \int_T a(\cdot, \omega)d\mu \right\rangle.
  \]
  Two agents are said to be the \emph{same type} if they
  have the same characteristics. The family of partitions of
  $\Omega$ is denoted by $\mathfrak P$. For any $\mathcal Q\in
  \mathfrak P$, let $T_{\mathcal Q}= \{t\in T: \Pi_t=
  \mathcal Q\}$. For any coalition $S$, put $\mathfrak P_S =
  \{\mathcal Q\in \mathfrak P:S\cap T_\mathcal Q \neq \emptyset
  \}$ and $\mathfrak P(S)= \{\mathcal Q\in \mathfrak P_S: \mu(S
  \cap T_\mathcal Q)> 0\}$. Then,
  $\bigcup_{\mathcal Q\in \mathfrak P_T}T_{\mathcal Q}= T$ and
  $L_t= L_{t^\prime}$ if and only if $t, t^\prime\in T_{\mathcal
  Q}$ for some $\mathcal Q\in \mathfrak P_T$.
  For any $S \in \Sigma$, $\bigvee \mathfrak Q$ denotes the
  $\sigma$-algebra generated by the smallest refinement of all
  members of $\mathfrak Q\subseteq \mathfrak P_S$.

  \medskip
  \noindent
  {\bf Assumptions:}

  \begin{itemize}
  \item[(A$_1$)] \emph{Measurability}: The functions $t\mapsto q_t$
  and $t\mapsto \mathcal{F}_t$ are measurable. This means that
  $\{t\in T: q_t\in A\} \in \Sigma$ for any Borel subset $A$ of
  $|\Omega|-1$ dimensional unit simplex, and $T_{\mathcal Q}\in
  \Sigma$ for all $\mathcal Q\in \mathfrak P$.

  \item[(A$_2$)] \emph{Carath\'{e}odory}: For each $\omega\in
  \Omega$, $(t, x)\mapsto U_t(\omega, x)$ is a Carath\'{e}odory
  function on $T\times Y_+$. This means that $U_{(\cdot)}(\omega,
  x)$ is measurable for all $(\omega, x)\in \Omega\times Y_+$,
  and $U_t(\omega, \cdot)$ is norm-continuous for all $(t, \omega)
  \in T\times \Omega$.

  \item[(A$_3$)] \emph{Monotonicity}: For each $(t, \omega) \in T
  \times \Omega$, if $x, y\in Y_+$ with $y\gg 0$, then $U_t(\omega,
  x+y)>  U_t(\omega, x)$.

  \item[(A$_3^\prime$)] \emph{Strong monotonicity}: For each $(t,
  \omega) \in T\times \Omega$, if $x, y\in Y_+$ with $y> 0$, then
  $U_t(\omega, x+y)> U_t(\omega, x)$.

  \item[(A$_4$)] \emph{Partial concavity}: For each $(t_0,\omega_0)
  \in T_1 \times \Omega$ and $S$-feasible assignment $f$
  with $\mu(S\cap T_1)> 0$ in $\mathcal E$, $U_{t_0}(\omega_0,
  \hat f(\omega_0))\ge \frac{1}{\mu(S\cap T_1)}\int_{S\cap T_1}
  U_{t_0}(\omega_0, f(\cdot, \omega_0))d\mu$, where $\hat
  f(\omega_0)= \frac{1}{\mu(S\cap T_1)}\int_{S\cap T_1}f(\cdot,
  \omega_0)d\mu$.

  \item[(A$_4^\prime$)]\emph{Concavity}: For each $(t, \omega)
  \in T\times \Omega$, $U_t(\omega, \cdot)$ is concave.

  \item[(A$_5$)] \emph{Strict positivity}: For each $(t,\omega)
  \in T\times \Omega$, $a(t, \omega)\gg 0$.

  \item[(A$_6$)] \emph{Similarity}: All large agents are the
  same type.

  \item[(A$_7$)] \emph{Minimality}: $T_1$ contains at least two
  elements.

  \item[(A$_8$)] \emph{Informativeness}: $\bigvee \mathfrak P_T=
  \mathcal F$.

  \item[(A$_9$)] \emph{$\mathcal F$-measurability}: For almost
  all $t\in T$ and $x\in Y_+$, $U_t(\cdot, x)$ is $\mathcal
  F$-measurable.
  \end{itemize}
  Under (A$_1$) and (A$_2$), $V_{(\cdot)}(\cdot): T \times
  (Y_+)^\Omega \to \mathbb R$ is a Carath\'{e}odory function.
  Under (A$_3$) (resp. (A$_3^\prime$)), $V_t$ is monotone (resp.
  strongly monotone) in the sense that if $x, y\in (Y_+)^\Omega$
  with $y(\omega)\gg 0$ (resp. $y(\omega)> 0$) for some $\omega
  \in \Omega$, then $V_t(x+y)> V_t(x)$. Clearly, (A$_4$) implies
  that $V_{t_0}$ is partially concave for all $t_0\in T_1$,
  that is, $V_{t_0}(\hat{f}(\cdot)) \ge \frac{1}{\mu(S\cap T_1)}
  \int_{S \cap T_1}V_{t_0}(f(\cdot,\cdot))d\mu$ for all $t_0
  \in T_1$ and $S$-feasible assignment $f$ in $\mathcal E$
  with $\mu(S\cap T_1)> 0$, where $\hat{f}$ is defined in (A$_4$).
  Similarly, (A$_4^\prime$) implies that $V_t$ is
  concave for all $t\in T$. By (A$_6$), all agents in $T_1$ have
  the same characteristics, so we use $(\mathcal F_{T_1},
  U_{T_1}, a (T_1, \cdot), q_{T_1})$ to denote their common
  characteristics. Similarly, $V_{T_1}$ denotes the common ex
  ante expected utility of agents in $T_1$. Note that (A$_8$)
  is similar to $(A.4)$ in \cite{Einy-Moreno-Shitovitz:00}, and
  (A$_1$)-(A$_3$), (A$_5$) are the same as those in
  \cite{Evern-Husseinov:08}. For undefined mathematical concepts
  and terminologies in this paper, refer to \cite{Aliprantis-Border:06}.

  \section{Privately blocking and exact feasibility\\ in
  atomless economies} \label{sec:atomless}

  In this section, we study privately blocking and exactly feasible
  allocations in an atomless economy. Thus, we assume $T= T_0$ in
  this section. Two lemmas are established in Subsection
  \ref{subsec:pribck}, which are used in Section \ref{sec:mixed}.
  Similar to that in \cite{Vind:72}, we also investigate the
  blocking power of a coalition for the (strong) fine core and
  the private core when the exact feasibility is imposed on allocations.

  \subsection{Privately blocking coalitions} \label{subsec:pribck}

  The following result is similar to Lemma 1 in
  \cite{Evern-Husseinov:08}. In order to obtain a slightly different
  conclusion, we provide a proof here.

  \begin{lemma}\label{lem: block 1}
  Let an allocation $f$ in $\mathcal E$ be privately blocked
  by a coalition $S$ and $\alpha\in (0, 1)$. Under \emph{(A$_1$),
  (A$_2$)} and \emph{(A$_5$)}, there exist an $S$-allocation $g$
  and a sub-coalition $S^\prime$ of $S$ such that
  \begin{itemize}
  \item[{\rm(i)}] $g(t, \omega)\gg 0$ for all $(t,\omega) \in
  S^\prime \times \Omega$, and $V_t(g(t, \cdot))> V_t(f(t,
  \cdot))$ for almost all $t\in S$
  \item[(ii)] $\int_S (a(\cdot, \omega)- g(\cdot, \omega)) d\mu
  \gg 0$ for all $\omega\in \Omega$,
  \item[{\rm(iii)}] $\mu(S^\prime\cap T_\mathcal Q)>
  \alpha \mu(S\cap T_\mathcal Q)$ for all $\mathcal Q \in
  \mathfrak P(S)$.
  \end{itemize}
  \end{lemma}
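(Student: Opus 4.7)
The plan is to perturb a given blocking allocation slightly toward the endowment and then carve off a good sub-coalition. Since $S$ privately blocks $f$, fix an $S$-feasible $h$ with $V_t(h(t,\cdot)) > V_t(f(t,\cdot))$ for almost all $t \in S$. For $\epsilon \in (0,1)$ set $g_\epsilon(t,\omega) = (1-\epsilon)h(t,\omega) + (\epsilon/2)\,a(t,\omega)$. Each $g_\epsilon$ is an $S$-allocation because the convex combination inherits $\mathcal{F}_t$-measurability and $Y_+$-valuedness; by (A$_5$), $g_\epsilon(t,\omega) \geq (\epsilon/2)\,a(t,\omega) \gg 0$, and
\[
\int_S (a - g_\epsilon)\,d\mu = \bigl(1-\tfrac{\epsilon}{2}\bigr)\int_S a\,d\mu - (1-\epsilon)\int_S h\,d\mu \geq \tfrac{\epsilon}{2}\int_S a\,d\mu \gg 0,
\]
using $\int_S h\,d\mu \leq \int_S a\,d\mu$ and the fact that (A$_5$) places $\int_S a\,d\mu$ in the interior of $Y_+$.

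The blocking inequality is the delicate point. By the norm-continuity in (A$_2$), for almost every $t \in S$ the real function $\epsilon \mapsto V_t(g_\epsilon(t,\cdot))$ is continuous at $0$ with value $V_t(h(t,\cdot)) > V_t(f(t,\cdot))$. The measurable sets
\[
F_N = \bigcap_{k \geq N}\bigl\{t \in S : V_t(g_{1/k}(t,\cdot)) > V_t(f(t,\cdot))\bigr\}
\]
are thus increasing in $N$, and $\bigcup_N F_N$ has full measure in $S$. Since $\Omega$ is finite, the family $\mathfrak{P}(S)$ is finite, so there is a single $N$ with $\mu(F_N \cap T_\mathcal{Q}) > \alpha\,\mu(S \cap T_\mathcal{Q})$ for every $\mathcal{Q} \in \mathfrak{P}(S)$ simultaneously; enlarging $N$ if necessary, absolute continuity of the Bochner integral makes $\|\int_{S \setminus F_N}(a - h)\,d\mu\|$ as small as we wish.

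Put $S' = F_N$ and define $g$ on $S$ by $g = g_{1/N}$ on $F_N$ and $g = h$ on $S \setminus F_N$. Then (iii) holds by construction; $g \gg 0$ on $S' \times \Omega$ from (A$_5$); and the blocking inequality holds on $F_N$ by definition of $F_N$ and on $S \setminus F_N$ because $g$ coincides with $h$ there. For (ii), a piecewise expansion yields
\[
\int_S (a - g)\,d\mu = \tfrac{1}{2N}\int_{F_N} a\,d\mu + \bigl(1-\tfrac{1}{N}\bigr)\int_S (a-h)\,d\mu + \tfrac{1}{N}\int_{S \setminus F_N}(a - h)\,d\mu;
\]
the first two summands lie in $Y_+$, the first in its interior at distance of order $1/N$ from the boundary of $Y_+$, while the third is a norm-controlled perturbation. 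The main obstacle is balancing this third term against the strict-positivity margin of the first; this is resolved by the absolute-continuity choice of $N$ above and essentially uses the interior-point hypothesis on $Y_+$ together with (A$_5$).
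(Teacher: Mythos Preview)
Your proof is correct and follows essentially the same route as the paper: the same convex perturbation $g_\epsilon=(1-\epsilon)h+\tfrac{\epsilon}{2}a$, the same piecewise definition of $g$ (perturbed on the good set, equal to $h$ elsewhere), and the same absolute-continuity balance for (ii). Your handling of (iii) via finiteness of $\mathfrak P(S)$ is in fact cleaner than the paper's more elaborate $\mathcal Q_0$--$\delta$ argument; the one point you gloss over is the measurability of the sets $F_N$, which the paper secures through a projection-theorem detour via the function $t\mapsto\epsilon_t$, but which follows equally well from the Carath\'eodory property of $V_{(\cdot)}(\cdot)$ together with the essential separability of the range of the Bochner-integrable functions involved.
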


  \begin{proof}
  Since $f$ is privately blocked by the coalition $S$, there
  exists an $S$-feasible allocation $h$ such that $V_{t}(h(t,
  \cdot))> V_{t}(f(t,\cdot))$ for almost all $t\in S$. Define
  a correspondence $P_f:S\rightrightarrows
  (Y_+)^\Omega$ by $P_f(t)=\{y\in L_t: V_t(y)> V_t(f(t,
  \cdot))\}$ for each $t\in S$. Then $h(t, \cdot)\in P_f(t)$ for
  almost all $t\in S$. By ignoring a $\mu$-null subset of $S$,
  one can choose a separable, closed linear subspace $Z$ of
  $Y^\Omega$ such that $f(S,\cdot) \cup h(S, \cdot) \cup a(S,
  \cdot)\subseteq Z$. Consider a correspondence
  $\widetilde{P}_f:S\rightrightarrows Z$
  defined by $\widetilde{P}_f(t)= Z\cap P_f(t)$. By Remark 6 in
  \cite{Evern-Husseinov:08}, Gr$_{\widetilde{P}_f}\in \Sigma_S
  \otimes \mathfrak{B}(Z)$, where $\Sigma_S =\{A \in \Sigma:
  A \subseteq S\}$, Gr$_{\widetilde{P}_f}$ denotes the graph
  of ${\widetilde{P}_f}$ and $\mathfrak{B}(Z)$ is the family of
  Borel subsets of $Z$. For any $\epsilon> 0$, define a
  correspondence $N_{\epsilon}:S\rightrightarrows Z$ by
  $N_{\epsilon}(t)= \{y\in Z: \|y- h(t, \cdot)\|<\epsilon\}$.
  Then, Gr$_{N_\epsilon}\in \Sigma_S\otimes \mathfrak{B}(Z)$.
  Furthermore, Gr$_{\tilde L}\in \Sigma_S\otimes \mathfrak{B}
  (Z)$, where $\tilde L: S\rightrightarrows Z$ is defined by
  $\tilde L(t)= Z\cap L_t$. For all $t\in S$, choose $\epsilon_t$
  such that $\epsilon_t=\sup\{\epsilon>0: y\in \widetilde{P}_f(t)
  \mbox{ whenever } y\in \tilde{L}(t)\cap N_\epsilon(t)\}$.
  Continuity of $V_t$ implies $\epsilon_t> 0$ for almost all
  $t\in S$. Let $\beta> 0$. Then,
  \[
  \{t\in S: \epsilon_t< \beta\}= \bigcup_{r\in \mathbb{Q}\cap(0,
  \beta)}\{t\in S: N_{r}(t)\cap \tilde{L}(t)\cap (Z\setminus
  \widetilde{P}_f(t))\neq \emptyset\},
  \]
  which is the projection of the set
  \[
  \bigcup_{r\in \mathbb{Q}\cap(0, \beta)}\left({\rm Gr}_{N_{r}}
  \cap {\rm Gr}_{\tilde L}\cap \left(S\times Z\setminus
  {\rm Gr}_{\widetilde{P}_f}\right)\right)\in \Sigma_S\otimes
  \mathfrak B(Z)
  \]
  on $S$. By the projection theorem \cite[p.608]{Aliprantis-Border:06},
  the set $\{t\in S: \epsilon_t < \beta\}\in \Sigma$, which
  means that the function $t\mapsto \epsilon_t$ is measurable.
  Choose a sequence $\{c_m\}\subset (0, 1)$ such that $c_m
  \rightarrow 0$ as $m\to \infty$. For each $m\geq 1$, define
  $h_m: S\times \Omega\to Y_+$ such that $h_m(t,
  \omega)= (1-c_m) h(t, \omega)+ \frac{c_m}{2} a(t, \omega)$.
  Then, $h_m$ is an $S$-allocation, and $h_m(t, \omega)\gg 0$
  for all $(t, \omega)\in S\times \Omega$. For each $m\geq 1$,
  put $S_m= \{t\in S: \|h_m(t, \cdot)- h(t, \cdot)\|<
  \epsilon_t\}$. Clearly, $S_m\in \Sigma_S$ and $S_m
  \subseteq S_{m+1}$ for all $m\geq 1$. Moreover, $\bigcup_{m}
  S_m\sim S$ and hence, $\lim_{m\to \infty}\mu(S\setminus S_m)=
  0$. By the definition of $\epsilon_t$, $h_m(t,\cdot)\in
  P_{f}(t)$ for almost all $t\in S_m$. For each $m\geq 1$,
  we now define a function $g_m:S\times \Omega\to Y_+$ by
  \[
  g_m(t, \omega)= \left\{
  \begin{array}{ll}
  h(t, \omega), & \mbox{if $t\in (S\setminus S_m)\times
  \Omega$;}\\[0.5em]
  h_{m}(t, \omega), & \mbox{if $(t, \omega)\in S_m\times
  \Omega$.}
  \end{array}
  \right.
  \]
  Then $g_m$ is an $S$-allocation, $V_t(g_m(t, \cdot))> V_t(f(t,
  \cdot))$ for almost all $t\in S$, and $g_m(t, \omega)\gg 0$
  for all $(t, \omega)\in S_m\times \Omega$. Now, for each
  $\omega\in \Omega$,
  \begin{eqnarray*}
  \int_{S}g_m(\cdot, \omega)d \mu &=& \int_{S\setminus S_m}
  h(\cdot,\omega)d\mu+ \int_{S_m}h_{m}(\cdot, \omega)d \mu \\
  &=& \int_{S\setminus S_m}(h(\cdot, \omega)- h_m(\cdot, \omega))
  d \mu+\int_{S}h_{m}(\cdot, \omega)d \mu.
  \end{eqnarray*}
  In addition, $\int_S h_{m}(\cdot,\omega)d \mu\leq \left(1-
  \frac{c_m}{2}\right)\int_S a(\cdot, \omega)d \mu$. Consequently,
  we obtain
  \[
  \int_{S}g_m(\cdot,\omega)d \mu \leq \int_{S\setminus S_m}c_m
  \left(h(\cdot, \omega)-\frac{1}{2}a(\cdot, \omega)\right)d \mu
   +\left(1- \frac{c_m}{2}\right)\int_S a(\cdot, \omega)d \mu,
  \]
  which is equivalent to
  \[
  \int_{S}(a(\cdot, \omega)- g_{m}(\cdot, \omega))d \mu\geq c_m
  \left(\frac{1}{2}\int_S a(\cdot, \omega)d \mu - z_m(\omega)
  \right),
  \]
  where $z_m(\omega)= \int_{S\setminus S_m} \left(h(\cdot,\omega)-
  \frac{1}{2}a(\cdot,\omega)\right) d\mu$. Since $\int_S a(\cdot,
  \omega) d\mu\gg 0$, by absolute continuity of the Bochner
  integral, $\frac{1}{2}\int_S a(\cdot,\omega)d \mu- z_m(\omega)
  \gg 0$ for all $\omega\in\Omega$ when $m$ is sufficiently large.
  Pick a $\mathcal Q_0\in \mathfrak P(S)$ satisfying $\mu(S\cap
  T_{\mathcal Q_0})\le \mu(S\cap T_\mathcal Q)$ for all $\mathcal
  Q\in \mathfrak P(S)$ and select a $1<\delta < \frac{1}{\alpha}$.
  Then for all $\mathcal Q\in \mathfrak P(S)$,
  \begin{eqnarray} \label{eqn:q0}
  (1-\alpha \delta)\mu(S\cap T_{\mathcal Q_0}) <
  (1-\alpha) \mu(S\cap T_{\mathcal Q}).
  \end{eqnarray}
  Choose some integer $m$ such that $\mu(S_m)> \alpha \delta
  \mu(S\cap T_{\mathcal Q_0})+ \mu(S\setminus T_{\mathcal Q_0})$.
  Obviously, $\mu(S_m \cap T_{\mathcal Q_0}) > \alpha \mu
  (S\cap T_{\mathcal Q_0})$. It is claimed
  that $\mu(S_m \cap T_{\mathcal Q})\le \alpha \mu(S\cap
  T_{\mathcal Q})$ implies $(1- \alpha \delta) \mu(S\cap
  T_{\mathcal Q_0}) \ge (1- \alpha)\mu(S\cap T_{\mathcal Q})$
  for any $\mathcal Q\in \mathfrak P(S)\setminus \{\mathcal
  Q_0\}$. If not, there is some $\mathcal Q^\prime\in \mathfrak
  P(S)\setminus \{\mathcal Q_0\}$ such that
  $\mu(S_m \cap T_{\mathcal Q^\prime})\le \alpha \mu(S\cap
  T_{\mathcal Q^\prime})$ but $(1- \alpha \delta) \mu(S\cap
  T_{\mathcal Q_0}) < (1- \alpha)\mu(S\cap T_{\mathcal Q^\prime})$.
  It follows that
  \begin{eqnarray*}
  \mu(S_m) &=& \mu(S_m\cap T_{\mathcal Q^\prime})+ \sum_{\mathcal
  Q\in \mathfrak P(S)\setminus \{\mathcal Q^\prime \}}
  \mu(S_m\cap T_{\mathcal Q})\\
  &\le& \alpha \mu(S\cap T_{\mathcal Q^\prime})+ \mu(S\cap
  T_{\mathcal Q_0})+ \sum_{\mathcal Q\in \mathfrak P(S)\setminus
  \{\mathcal Q_0, \mathcal Q^\prime\}}\mu(S\cap T_{\mathcal Q})\\
  &<& \alpha \delta \mu(S\cap T_{\mathcal Q_0})+ \mu(S\setminus
  T_{\mathcal Q_0}),
  \end{eqnarray*}
  which contradicts with the choice of $S_m$. This verifies the
  claim. By (\ref{eqn:q0}) and the claim, we conclude that
  $\mu(S_m\cap T_\mathcal Q)>\alpha \mu(S\cap T_\mathcal Q)$ for
  all $\mathcal Q\in \mathfrak P(S)$. The proof is completed by
  letting $g= g_m$ and $S^\prime= S_m$.
  \end{proof}

  \begin{remark}
  The conclusion of Lemma \ref{lem: block 1} is also true if the
  atomless measure space is replaced by a complete finite positive
  measure space.
  \end{remark}

  \begin{lemma} \cite{uhl:69} \label{lem:lyapunov}
  Suppose that $(X, \Sigma, \mu)$ is an atomless measure space
  and $E$ is a Banach space. If $f\in L_1^X(\mu, E)$, then the
  set $H= {\rm cl} \{\int_B f: B\in \Sigma\}$ is convex.
  \end{lemma}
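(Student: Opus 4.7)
The plan is to interpret the integrand $f$ as generating a vector measure $\nu\colon\Sigma\to E$ via $\nu(B)=\int_B f\,d\mu$, check that $\nu$ is of bounded variation and non-atomic, and then approximate $\nu$ uniformly in total variation by vector measures with finite-dimensional (hence already convex) range, so that convexity passes through the closure operation defining $H$.

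First I would record the basic properties of $\nu$. Bochner integrability gives $|\nu|(X)=\int_X\|f\|\,d\mu<\infty$, so $\nu$ is of bounded variation, and $|\nu|\ll\mu$ with density $\|f\|$, so atomlessness of $\mu$ transfers to $|\nu|$ and hence to $\nu$. Next I would reduce to the simple-function case: choose simple functions $f_n=\sum_{i=1}^{k_n}x_{n,i}\chi_{A_{n,i}}$ with the $A_{n,i}\in\Sigma$ pairwise disjoint and $\|f-f_n\|_1\to 0$, and set $\nu_n(B)=\int_B f_n\,d\mu$. The uniform-in-$B$ estimate
\[
\sup_{B\in\Sigma}\|\nu_n(B)-\nu(B)\|\;\le\;\int_X\|f-f_n\|\,d\mu\;\longrightarrow\;0
\]
is then immediate.

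For each $n$ the range of $\nu_n$ is the linear image, under $(t_1,\dots,t_{k_n})\mapsto\sum_i t_i x_{n,i}$, of
\[
R_n=\bigl\{\bigl(\mu(B\cap A_{n,1}),\dots,\mu(B\cap A_{n,k_n})\bigr)\colon B\in\Sigma\bigr\}\subseteq\mathbb{R}^{k_n},
\]
and $R_n$ is compact and convex by the classical Lyapunov theorem applied to the non-atomic $\mathbb{R}^{k_n}$-valued measure $B\mapsto(\mu(B\cap A_{n,1}),\dots,\mu(B\cap A_{n,k_n}))$. Hence $\{\nu_n(B)\colon B\in\Sigma\}$ is convex for every $n$. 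Now take $y_1,y_2\in H$ and $\alpha\in(0,1)$; by definition of $H$ we may write $y_j=\lim_{k}\nu(B_j^{(k)})$. For each $k$ and $n$, convexity of the range of $\nu_n$ provides $C_{n,k}\in\Sigma$ with
\[
\nu_n(C_{n,k})=\alpha\,\nu_n(B_1^{(k)})+(1-\alpha)\,\nu_n(B_2^{(k)}).
\]
Combining the uniform estimate above with a diagonal selection $n=n(k)\to\infty$ yields $\nu(C_{n(k),k})\to\alpha y_1+(1-\alpha)y_2$, placing the convex combination in $H$.

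The main obstacle to watch out for is that in infinite dimensions the range of $\nu$ itself generally fails to be either convex or closed, so there is no hope of finding an exact preimage $C\in\Sigma$ with $\nu(C)=\alpha y_1+(1-\alpha)y_2$; the argument intrinsically produces only a convergent sequence, which is exactly why the statement is phrased in terms of the closure $H$ rather than the raw range, and why a uniform-variation approximation is the right vehicle rather than any pointwise construction.
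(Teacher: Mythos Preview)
The paper does not supply its own proof of this lemma; it simply quotes the result with a citation to Uhl (1969), so there is nothing in the paper to compare against beyond the reference itself. Your argument is correct and is essentially Uhl's original proof: pass from the Bochner-integrable $f$ to the induced vector measure $\nu$ of bounded variation, approximate uniformly in total variation by integrals of simple functions whose ranges are linear images of finite-dimensional Lyapunov ranges (hence convex and compact), and let convexity survive in the closure via the uniform estimate $\sup_{B}\|\nu_n(B)-\nu(B)\|\le\|f-f_n\|_1$. One cosmetic remark: the diagonal selection is not strictly needed, since for fixed $\varepsilon>0$ you may first choose $k$ so that $\|\nu(B_j^{(k)})-y_j\|<\varepsilon/4$ and then $n$ so that $\|f-f_n\|_1<\varepsilon/4$, giving directly $\|\nu(C_{n,k})-(\alpha y_1+(1-\alpha)y_2)\|<\varepsilon$; but what you wrote is also fine.
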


  The following result is an extension of the result used in the
  main theorem of \cite{Vind:72} to an asymmetric information
  economy whose commodity space is an ordered Banach space
  having an interior point in its positive cone.

  \begin{lemma}\label{lem: block 2}
  Let $f$ be an allocation in $\mathcal E$. Suppose there exist
  a coalition $S$, a sub-coalition $S^\prime$ of $S$ and an
  $S$-allocation $g$ such that $g(t, \omega)\gg 0$ for all
  $(t, \omega)\in S^\prime\times \Omega$, $\mathfrak P(S)=
  \mathfrak P(S^\prime)$ and $V_t(g(t, \cdot))> V_t(f(t,
  \cdot))$ for almost all $t\in S$. Under \emph{(A$_1$)-(A$_3$)},
  for each $r\in (0,1)$, there exists an $S$-allocation $h$ such
  that $V_t (h(t,\cdot))> V_t(f(t, \cdot))$ for almost all
  $t\in S$, and $\int_{S} h(\cdot,\omega)d \mu= \int_{S}(r
  g(\cdot,\omega)+ (1- r) f(\cdot, \omega))d\mu$ for all
  $\omega\in \Omega$.
  \end{lemma}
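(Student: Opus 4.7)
The idea is to construct $h$ by a Vind-style combination: on a measurable ``approximate partition'' of $S$ obtained from Lemma \ref{lem:lyapunov}, let $h$ equal $g$ on one part and $f$ plus a small positive boost on the other, and then absorb the resulting residual through a small correction on a sub-coalition of $S'$ where $g \gg 0$ provides slack, so as to achieve an \emph{exact} integral equality while preserving the strict-preference margin via continuity.

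First, by (A$_2$) and the hypothesis $V_t(g(t,\cdot)) > V_t(f(t,\cdot))$, for each $t \in S$ there is $\delta_t > 0$ such that $V_t(y) > V_t(f(t,\cdot))$ whenever $y \in L_t$ and $\|y - g(t,\cdot)\| < \delta_t$; the map $t \mapsto \delta_t$ is measurable by the projection-theorem argument used in the proof of Lemma \ref{lem: block 1}. Fix an interior point $e \gg 0$ of $Y_+$ and let $\hat e(\omega) = e$, which lies in $L_t$ for every $t$. For each $\mathcal{Q} \in \mathfrak{P}(S) = \mathfrak{P}(S')$, apply Lemma \ref{lem:lyapunov} to the Bochner-integrable map $(g-f)|_{S_\mathcal{Q}} \colon S_\mathcal{Q} \to Y^\Omega$: the closure of $\{\int_B(g-f)\,d\mu : B \in \Sigma_{S_\mathcal{Q}}\}$ is a convex subset of $Y^\Omega$ containing $0$ and $\int_{S_\mathcal{Q}}(g-f)\,d\mu$, and hence $r\int_{S_\mathcal{Q}}(g-f)\,d\mu$. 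For any prescribed $\varepsilon > 0$, there is thus $A_\mathcal{Q} \subseteq S_\mathcal{Q}$ with $\|\int_{A_\mathcal{Q}}(g-f) - r\int_{S_\mathcal{Q}}(g-f)\| < \varepsilon$; by atomlessness we may further ensure that $A_\mathcal{Q} \cap S'_\mathcal{Q}$ has positive measure and that, on this intersection, $g$ is bounded below by some $\beta_\mathcal{Q} e > 0$ while $\delta_t \ge \delta^0_\mathcal{Q} > 0$.

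Put $A = \bigcup_\mathcal{Q} A_\mathcal{Q}$ and define the preliminary function $\tilde h = g$ on $A$ and $\tilde h = f + \gamma \hat e$ on $S \setminus A$, with $\gamma > 0$ small. Then $\tilde h(t,\cdot) \in L_t \cap Y_+^\Omega$, and strict preference holds by hypothesis on $A$ and by (A$_3$) on $S \setminus A$, since $\gamma \hat e \gg 0$ componentwise. The residual $R(\omega) := \int_S \tilde h(\cdot,\omega)\,d\mu - \int_S[rg+(1-r)f](\cdot,\omega)\,d\mu$ decomposes type by type into $\mathcal{F}_\mathcal{Q}$-measurable vectors $R_\mathcal{Q}$, each of norm controlled by $\varepsilon$ and $\gamma$. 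Choose $\varepsilon, \gamma$ small enough that the constant $\mathcal{F}_\mathcal{Q}$-measurable functions $\xi_\mathcal{Q}(\omega) := -R_\mathcal{Q}(\omega)/\mu(A_\mathcal{Q} \cap S'_\mathcal{Q})$ satisfy $\|\xi_\mathcal{Q}\|_\infty < \delta^0_\mathcal{Q}$ and, using that $e$ is an interior point of $Y_+$, $-\beta_\mathcal{Q} e \le \xi_\mathcal{Q}(\omega) \le \beta_\mathcal{Q} e$ in $Y$ for every $\omega$. Set $h = \tilde h + \xi_\mathcal{Q}\cdot \chi_{A_\mathcal{Q}\cap S'_\mathcal{Q}}$ on each type. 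Then $h$ is an $S$-allocation (since each $\xi_\mathcal{Q}$ is $\mathcal{F}_\mathcal{Q}$-measurable and $g + \xi_\mathcal{Q} \ge \beta_\mathcal{Q} e - \beta_\mathcal{Q} e = 0$), and $V_t(h(t,\cdot)) > V_t(f(t,\cdot))$ a.e.\ in $S$---by continuity on $A_\mathcal{Q}\cap S'_\mathcal{Q}$ since $\|h(t,\cdot) - g(t,\cdot)\| = \|\xi_\mathcal{Q}\| < \delta^0_\mathcal{Q} \le \delta_t$, and by the previous step elsewhere. Finally, $\int_S h\,d\mu = \int_S \tilde h\,d\mu - R = \int_S[rg+(1-r)f]\,d\mu$, as required.

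The main obstacle is the passage from the mere approximate integral equality furnished by Lemma \ref{lem:lyapunov} to the \emph{exact} equality demanded by the statement. This is precisely where the hypotheses $\mathfrak{P}(S) = \mathfrak{P}(S')$ and $g \gg 0$ on $S' \times \Omega$ are essential: the former secures a positive-measure portion of $S'$ inside every information type, and the order-theoretic slack provided by the latter allows a small integral-correcting perturbation to be absorbed there without violating positivity or leaving the strict-preference neighbourhood $\{y \in L_t : \|y - g(t,\cdot)\| < \delta_t\}$.
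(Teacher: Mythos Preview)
Your strategy is sound in spirit, but there is a circularity in the order of choices that leaves the argument incomplete as written. You first pick $A_\mathcal{Q}$ (depending on $\varepsilon$) and only then extract from $A_\mathcal{Q}\cap S'_\mathcal{Q}$ the constants $\beta_\mathcal{Q}$, $\delta^0_\mathcal{Q}$ and the measure $\mu(A_\mathcal{Q}\cap S'_\mathcal{Q})$; afterwards you demand that $\varepsilon,\gamma$ be small enough to make $\|\xi_\mathcal{Q}\|=\|R_\mathcal{Q}\|/\mu(A_\mathcal{Q}\cap S'_\mathcal{Q})$ smaller than both $\delta^0_\mathcal{Q}$ and the threshold yielding $-\beta_\mathcal{Q}e\le\xi_\mathcal{Q}\le\beta_\mathcal{Q}e$. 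But all three of $\mu(A_\mathcal{Q}\cap S'_\mathcal{Q})$, $\beta_\mathcal{Q}$, $\delta^0_\mathcal{Q}$ depend on $A_\mathcal{Q}$ and hence on $\varepsilon$; nothing you have said prevents them from shrinking to zero faster than $\|R_\mathcal{Q}\|$ does, so the required inequality may fail for every $\varepsilon>0$. The fix is to choose the uniform lower bounds \emph{before} $\varepsilon$: pick a positive-measure $\hat S'_\mathcal{Q}\subseteq S'_\mathcal{Q}$ on which $g(t,\omega)\ge\beta_\mathcal{Q}e$ for all $\omega$ and $\delta_t\ge\delta^0_\mathcal{Q}$, with $\beta_\mathcal{Q},\delta^0_\mathcal{Q}>0$ fixed once and for all, and then include $\mu(\,\cdot\,\cap\hat S'_\mathcal{Q})$ as an extra scalar coordinate in the Lyapunov application so that $\mu(A_\mathcal{Q}\cap\hat S'_\mathcal{Q})$ stays near, say, $r\mu(\hat S'_\mathcal{Q})>0$ uniformly in $\varepsilon$. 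With these quantities fixed in advance, the final choice of $\varepsilon,\gamma$ becomes legitimate.

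By contrast, the paper avoids a signed correction altogether. It first shrinks $g$ to $y^\mathcal{Q}=(1-c_{m_\mathcal{Q}})g$ on a large subset $S^\mathcal{Q}_{m_\mathcal{Q}}$ (chosen via continuity so that strict preference persists), applies Lyapunov to $y^\mathcal{Q}-f$, and arranges matters so that the assembled integral $\int_{S\cap T_\mathcal{Q}} g^\mathcal{Q}$ is \emph{strictly below} the target $\int_{S\cap T_\mathcal{Q}}(rg+(1-r)f)$. The strictly positive residual $d^\mathcal{Q}(\omega)\gg 0$ is then simply added back, and strict preference is restored by monotonicity (A$_3$) rather than by a uniform continuity bound. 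This is why the paper never needs a lower bound on $\delta_t$ and why the role of $g\gg 0$ on $S'$ there is only to guarantee that $x^\mathcal{Q}\gg 0$ exists, not to absorb a two-sided perturbation.
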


  \begin{proof}
  For each $m\geq 1$, let $g_m: S\times \Omega\to Y_+$ be
  defined by $g_m(t,\omega)= (1-c_m) g(t, \omega)$. Then $g_m$
  is an $S$-allocation and $g_m(t, \omega)\gg 0$ for all $(t,
  \omega)\in S^\prime \times \Omega$.
  Pick an $r\in (0, 1)$ and a $\mathcal Q\in \mathfrak P(S)$.
  Let $\{c_m\}$ be a sequence in $(0, 1)$ such that $c_m\to 0$
  as $m\to \infty$. Applying an argument similar to
  that in Lemma \ref{lem: block 1}, it can be shown that there
  is an increasing sequence $\{S_m^\mathcal Q\}\subseteq
  \Sigma_{S\cap T_\mathcal Q}$ such that $\bigcup_{m}S_m^\mathcal
  Q\sim S\cap T_\mathcal Q$, $\lim_{m \to \infty} \mu((S\cap
  T_\mathcal Q)\setminus S_m^\mathcal Q)= 0$, and $V_t(g_m(t,
  \cdot))> V_t(f(t, \cdot))$ for almost all $t\in S_m^\mathcal Q$.
  Choose an $m_\mathcal Q$ such that $\mu(S^\prime\cap T_\mathcal
  Q\cap S_{m_\mathcal Q}^\mathcal Q)> 0$. Consider the function
  $y^\mathcal Q:(S\cap T_\mathcal Q)\times \Omega\to Y_+$
  defined by
  \[
  y^\mathcal Q (t, \omega)= \left\{
  \begin{array}{ll}
  g_{m_\mathcal Q}(t, \omega), & \mbox{if $(t, \omega)\in
  S_{m_\mathcal Q}^\mathcal Q\times \Omega$;}\\[0.5em]
  g(t, \omega), & \mbox{otherwise.}
  \end{array}
  \right.
  \]
  Obviously, $y^\mathcal Q$ is an $(S\cap T_\mathcal Q)$-allocation,
  and $V_t(y^\mathcal Q (t, \cdot))> V_t(f(t, \cdot))$ for almost
  all $t\in S\cap T_\mathcal Q$. Furthermore, for all $\omega\in
  \Omega$,
  \[
  \int_{S\cap T_\mathcal Q} y^\mathcal Q(\cdot, \omega)d \mu=
  \int_{S\cap T_\mathcal Q} g(\cdot, \omega)d \mu- c_{m_\mathcal
  Q}\int_{S_{m_\mathcal Q}^\mathcal Q}g(\cdot, \omega)d \mu.
  \]
  Let $x^\mathcal Q\gg 0$ be chosen such that $x^\mathcal Q \le
  \frac{c_{m_\mathcal Q}}{2}\int_{S_{m_\mathcal Q}^\mathcal Q}
  g(\cdot, \omega)d \mu$ for all $\omega\in \Omega$. Let $U(r,
  \mathcal Q)$ be an open neighborhood of $0$ such that $r
  x^\mathcal Q- U(r, \mathcal Q)\subseteq$ int$Y_+$. By Lemma
  \ref{lem:lyapunov},
  \[
  H_\mathcal Q= {\rm cl} \left\{\left(\mu(E^\mathcal Q),
  \int_{E^\mathcal Q}(y^\mathcal Q-f) d \mu\right)\in\mathbb R
  \times Y^\Omega: E^\mathcal Q\in\Sigma_{S\cap T_\mathcal Q}
  \right\}
  \]
  is a convex set. So, there is a sequence $\{E_n^\mathcal Q\}
  \subseteq \Sigma_{S\cap T_\mathcal Q}$ such that for each
  $\omega \in \Omega$,
  \[
  \lim_{n \to \infty} \left(\mu(E_n^\mathcal Q),
  \int_{E_n^\mathcal Q} (y^\mathcal Q(\cdot, \omega)-
  f(\cdot, \omega))d \mu\right)= r \left(\mu(S\cap T_\mathcal Q),
  \ z^\mathcal Q(\omega)\right),
  \]
  where $z^\mathcal Q(\omega)= \int_{S\cap T_\mathcal Q} (y^\mathcal
  Q(\cdot,\omega)- f(\cdot,\omega))d \mu$. Define a function
  $b_n^\mathcal Q:\Omega\to Y$ such that $b_n^\mathcal Q(\omega)
  = \int_{E_n^\mathcal Q}(y^\mathcal Q(\cdot, \omega)-f(\cdot,
  \omega)) d\mu-r z^\mathcal Q(\omega)$. Since $\|b_n^\mathcal
  Q(\omega)\|\to 0$ as $n\to \infty$, there is an $n_\mathcal Q$
  such that $b_{n_\mathcal Q}^\mathcal Q(\omega)\in U(r, \mathcal
  Q)$ for all $\omega\in \Omega$ and $\mu(E_{n_\mathcal Q}^\mathcal
  Q)< \mu(S\cap T_\mathcal Q)$. Consider the function $g^\mathcal
  Q:(S\cap T_\mathcal Q)\times \Omega\to Y_+$ defined by
  \[
  g^\mathcal Q(t, \omega)= \left\{
  \begin{array}{ll}
  y^\mathcal Q(t, \omega), & \mbox{if $(t, \omega)\in E_{n_\mathcal
  Q}^\mathcal Q\times \Omega$;}\\[0.5em]
  f(t, \omega)+ \frac{rx^\mathcal Q}{\mu \left((S\cap T_\mathcal
  Q)\setminus E_{n_\mathcal Q}^\mathcal Q\right)}, & \mbox{if
  $(t, \omega)\in (\left(S\cap T_\mathcal Q)\setminus E_{n_\mathcal
  Q}^\mathcal Q\right)\times \Omega$.}
  \end{array}
  \right.
  \]
  By (A$_3$), we have $V_t(g^\mathcal Q(t, \cdot))> V_t(f(t, \cdot))$
  for almost all $t\in S\cap T_\mathcal Q$ and $g^\mathcal Q$ is an
  $(S\cap T_\mathcal Q)$-allocation. Thus, we have
  \begin{eqnarray*}
  \int_{S\cap T_\mathcal Q} g^\mathcal Q(\cdot, \omega)d \mu &=&
  \int_{E_{n_\mathcal Q}^\mathcal Q} (y^\mathcal Q(\cdot,\omega) -
  f(\cdot,\omega))d \mu+ \int_{S\cap T_\mathcal Q} f(\cdot,\omega)
  d \mu+ rx^\mathcal Q.
  \end{eqnarray*}
  Furthermore, for all $\omega\in \Omega$,
  \[
  \int_{E_{n_\mathcal Q}^\mathcal Q} (y^\mathcal Q(\cdot,\omega)-
  f(\cdot, \omega))d \mu- b_{n_\mathcal Q}^\mathcal Q(\omega)= r
  \int_{S\cap T_\mathcal Q} (y^\mathcal Q(\cdot,\omega)
  -f(\cdot, \omega))d \mu.
  \]
  Consequently, we obtain
  \[
  \int_{S\cap T_\mathcal Q} g^\mathcal Q(\cdot, \omega)d\mu \ll
  \int_{S\cap T_\mathcal Q}(r y^\mathcal Q(\cdot,\omega)+ (1- r)
  f(\cdot, \omega))d\mu+ 2r x^\mathcal Q
  \]
  for each $\omega\in \Omega$, which implies that for each $\omega
  \in \Omega$,
  \[
  \int_{S\cap T_\mathcal Q} g^\mathcal Q(\cdot, \omega)d\mu \ll
  \int_{S\cap T_\mathcal Q}(r g(\cdot,\omega)+ (1- r)f(\cdot,
  \omega))d\mu.
  \]
  We now define a $\mathcal Q$-measurable $d^\mathcal Q: \Omega
  \to Y_+$ such that for each $\omega\in \Omega$,
  \[
  d^\mathcal Q(\omega)= \frac{1}{\mu(S\cap T_\mathcal Q)}
  \left[\int_{S\cap T_\mathcal Q}(r g(\cdot,\omega)+ (1- r)
  f(\cdot, \omega))d\mu- \int_{S\cap T_\mathcal Q} g^\mathcal
  Q(\cdot, \omega)d\mu\right].
  \]
  Clearly, $d^\mathcal Q(\omega)\gg 0$ for each $\omega\in \Omega$.
  Define an $(S\cap T_\mathcal Q)$-allocation by $h^\mathcal
  Q(t,\omega)= g^\mathcal Q(t, \omega)+ d^\mathcal Q(\omega)$
  for all $(t,\omega)\in (S\cap T_\mathcal Q) \times \Omega$.
  Then, $V_t(h^\mathcal Q(t, \cdot))> V_t(f(t,\cdot))$ for almost
  all $t\in S\cap T_\mathcal Q$ and $\int_{S\cap T_\mathcal Q}
  h^\mathcal Q(\cdot, \omega)d\mu= \int_{S\cap T_\mathcal Q}(r
  g(\cdot,\omega)+ (1- r)f(\cdot, \omega))d\mu$ for all $\omega
  \in \Omega$. Let $h:S\times \Omega\to Y_+$ be defined by
  \[
  h(t, \omega) = \left\{
  \begin{array}{ll}
  h^\mathcal Q(t, \omega), & \mbox{if $(t, \omega)\in (S\cap
  T_\mathcal Q)\times \Omega$ \mbox{ and} $\mathcal Q\in
  \mathfrak P(S)$ ;}\\[0.5em]
  g(t, \omega), & \mbox{otherwise.}
  \end{array}
  \right.
  \]
  It can be readily checked that $h$ is the desired $S$-allocation.
  \end{proof}

  \subsection{Allocations with the exact feasibility}
  \label{subsec:thfidom}
  In this subsection, we provide a characterization of exactly feasible
  allocations of $\mathcal E$ that are not in various types of cores.
  Given a coalition $S$ of $\mathcal E$, an $S$-assignment $f$ in
  $\mathcal E$ is called \emph{$S$-exactly feasible}
  if $\int_S f(\cdot,\omega)d\mu = \int_S a(\cdot, \omega)d\mu$ for
  all $\omega \in \Omega$. For simplicity, $T$-exactly feasible
  assignment is just termed as exactly feasible assignment. An
  allocation $f$ in $\mathcal E$ is \emph{NY-strongly
  fine\footnote{NY is the abbreviation of Nicholas Yannelis. Here,
  we follow some idea of his definition in \cite{Yannelis:91}, to
  distinguish it from the concept of Wilson in \cite{Wilson:78}.}
  blocked by a coalition $S$} \cite{Yannelis:91} if there exist
  a sub-coalition $S_0$ and an $S$-exactly feasible assignment $g$
  such that $g(t, \cdot)$ is $\bigvee \mathfrak P_S$-measurable and
  $V_t(g(t, \cdot))\geq V_t(f(t,\cdot))$ for almost all $t\in S$,
  and $V_t(g(t, \cdot))> V_t(f(t,\cdot))$ for almost all $t\in S_0$.
  The \emph{NY-strong fine core} \cite{Yannelis:91} of $\mathcal E$
  is the set of exactly feasible allocations which are not
  $NY$-strongly fine blocked by any coalition of $\mathcal E$.

  \begin{lemma}\label{lem: block 3}
  Let an allocation $f$ in $\mathcal E$ be $NY$-strongly fine
  blocked by a  coalition $S$ of $\mathcal E$. Under
  \emph{(A$_1$)-(A$_2$)}, \emph{(A$_3^\prime$)} and \emph{(A$_5$)},
  there exist a sub-coalition $S^\prime$ of $S$ and an $S$-assignment
  $g$ such that
  \begin{itemize}
  \item[{\rm (i)}] $g(t, \cdot)$ is $\bigvee \mathfrak P_S$-measurable
  and $V_t(g(t, \cdot))> V_t(f(t, \cdot))$ for almost all $t\in S$,

  \item[{\rm (ii)}] $g(t, \omega)\gg 0$ for all $(t, \omega)
  \in S^\prime \times \Omega$,

  \item[{\rm (iii)}] $\int_S (a(\cdot,\omega)- g(\cdot, \omega))d\mu
  \gg 0$ for all $\omega\in \Omega$.
  \end{itemize}
  \end{lemma}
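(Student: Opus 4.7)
The approach adapts the Lyapunov-style perturbation argument of Lemma \ref{lem: block 1}, exploiting two independent mechanisms for strict utility improvement: on $S_0$, where $V_t(h)> V_t(f)$ strictly, a continuity-radius perturbation suffices; on $S\setminus S_0$, where only $V_t(h)\ge V_t(f)$ is known, the strong monotonicity assumption (A$_3^\prime$) permits strict improvement by adding any $\gg 0$ element.

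Unpacking the hypothesis yields a sub-coalition $S_0\subseteq S$ and an $S$-exactly feasible assignment $h$ with $h(t,\cdot)$ being $\bigvee\mathfrak P_S$-measurable, $V_t(h(t,\cdot))\ge V_t(f(t,\cdot))$ for almost all $t\in S$, and strict on $S_0$. Mimicking the projection-theorem argument of Lemma \ref{lem: block 1} restricted to $S_0$, I would produce a measurable map $t\mapsto\epsilon_t$ with $\epsilon_t>0$ a.e.~on $S_0$ such that every $y\in L_t$ with $\|y-h(t,\cdot)\|<\epsilon_t$ satisfies $V_t(y)>V_t(f(t,\cdot))$. Choose $c_m\in(0,1)$ with $c_m\to 0$, put $h_m(t,\omega)=(1-c_m)h(t,\omega)+\tfrac{c_m}{2}a(t,\omega)$, which is $\gg 0$ by (A$_5$) and $\bigvee\mathfrak P_S$-measurable (since $\mathcal F_t\subseteq\bigvee\mathfrak P_S$ for $t\in S$, hence $a(t,\cdot)$ is $\bigvee\mathfrak P_S$-measurable), and let $S_m^0=\{t\in S_0:\|h_m(t,\cdot)-h(t,\cdot)\|<\epsilon_t\}$, an increasing measurable sequence with $\mu(S_0\setminus S_m^0)\to 0$.

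Fix a small $\gamma_0\gg 0$ in $Y$ (constant in $\omega$) and define
\[
g(t,\omega)=\begin{cases}
h_m(t,\omega),& t\in S_m^0,\\
h(t,\omega),& t\in S_0\setminus S_m^0,\\
h(t,\omega)+\gamma_0,& t\in S\setminus S_0.
\end{cases}
\]
Conditions (i) and (ii) fall out immediately: $\bigvee\mathfrak P_S$-measurability holds piecewise; strict utility follows on $S_m^0$ from the continuity radius, on $S_0\setminus S_m^0$ directly from the hypothesis, and on $S\setminus S_0$ from (A$_3^\prime$) combined with (A$_5$) and $\gamma_0>0$; taking $S^\prime:=S_m^0$, one has $g=h_m\ge \tfrac{c_m}{2}a\gg 0$.

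The main obstacle is (iii). Using exact feasibility $\int_S h\,d\mu=\int_S a\,d\mu$, a direct computation gives
\[
\int_S(a-g)(\cdot,\omega)\,d\mu\;=\;c_m\int_{S_m^0}\!\Bigl(h-\tfrac{a}{2}\Bigr)(\cdot,\omega)\,d\mu\;-\;\mu(S\setminus S_0)\,\gamma_0,
\]
which must be $\gg 0$ for every $\omega\in\Omega$. Unlike in Lemma \ref{lem: block 1}, where the inequality $\int_S h\le\int_S a$ supplies a permanent $\tfrac{c_m}{2}\int_S a\gg 0$ reserve that absorbs $z_m\to 0$, here exact feasibility removes that reserve, and $\int_{S_m^0}(h-\tfrac{a}{2})\,d\mu$ need not even be nonnegative. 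To surmount this I would invoke the atomless structure of $T=T_0$: applying Lemma \ref{lem:lyapunov} on $(S,\Sigma_S,\mu)$ to the joint integrand $(h,a,\mathbf 1_{S_0})\in L_1^S(\mu,Y^\Omega\times Y^\Omega\times\mathbb R)$, one can pass to a measurable sub-coalition $E_m\subseteq S_m^0$ along which $(\int_{E_m}h,\int_{E_m}a)$ is positioned in the convex range so as to force $\int_{E_m}(h-\tfrac{a}{2})\gg 0$; then fixing $m$ large and choosing $\gamma_0$ small in turn yields the required $\gg 0$ inequality for all $\omega$, completing the construction.
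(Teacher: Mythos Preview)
Your construction handles (i) and (ii) correctly, and you have correctly located the difficulty in (iii). However, the Lyapunov rescue you propose at the end does not work. You need a set $E_m\subseteq S_m^0$ with $\int_{E_m}(h-\tfrac{a}{2})(\cdot,\omega)\,d\mu\gg 0$ for every $\omega$, but Lyapunov only tells you that the closure of $\{\int_E(h-\tfrac{a}{2}): E\in\Sigma_{S_m^0}\}$ is convex; it does not let you ``position'' the integral outside that range. If, say, $Y=\mathbb R^2$, $\Omega$ is a singleton, $a\equiv(2,2)$, and $h\equiv(1,0)$ on $S_0$ (balanced by $h\equiv(3,4)$ on $S\setminus S_0$ so that exact feasibility holds), then $h-\tfrac{a}{2}=(0,-1)$ on $S_0$ and $\int_E(h-\tfrac{a}{2})=\mu(E)\,(0,-1)$ for every $E\subseteq S_0$, never $\gg 0$. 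Applying Lyapunov on all of $S$ with the extra coordinate $\mathbf 1_{S_0}$ does not help either: controlling $\mu(E\cap S_0)$ in the limit does not force $E\subseteq S_m^0$, which you need for the continuity-radius argument to deliver strict improvement via $h_m$.

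The missing idea is that (A$_3'$) is \emph{strong} monotonicity: a merely nonzero positive increment (not $\gg 0$) already yields strict utility gain. The paper exploits this as follows. From $V_t(y(t,\cdot))>V_t(f(t,\cdot))$ on $S_0$ and (A$_3'$) one first deduces that $y(t,\cdot)\not\equiv 0$ a.e.\ on $S_0$; hence there is an atom $A$ of $\bigvee\mathfrak P_S$ and a sub-coalition $S_1\subseteq S_0$ with $y(t,\omega)>0$ for $\omega\in A$, $t\in S_1$. Scaling $y\mapsto(1-c_m)y$ on a continuity-radius set $S_m\subseteq S_1$ liberates the \emph{nonzero} (not $\gg 0$) surplus $b(\omega)=\tfrac{c_m}{2}\int_{S_m}y(\cdot,\omega)\,d\mu$ on $A$, and transferring $b$ to $S\setminus S_0$ gives strict improvement there by (A$_3'$). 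The resulting assignment $h$ now satisfies $V_t(h)>V_t(f)$ on \emph{all} of $S$ together with $\int_S h\le\int_S a$ (inequality, not equality), and at this point the standard Lemma~\ref{lem: block 1} perturbation $h\mapsto(1-c_m)h+\tfrac{c_m}{2}a$ delivers the $\gg 0$ slack in (iii). In short: first use (A$_3'$) with a $>0$ transfer to pass from weak to strict blocking with free disposal, then run Lemma~\ref{lem: block 1}; do not try to manufacture $\gg 0$ slack directly from the exactly feasible $h$.
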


  \begin{proof}
  Since $f$ is $NY$-strongly fine blocked by $S$, there are a
  sub-coalition $S_0$ of $S$ and an $S$-exactly feasible assignment
  $y$ such that $y(t, \cdot)$ is $\bigvee \mathfrak P_S$-measurable
  and $V_t(y(t, \cdot))\geq V_t(f(t, \cdot))$ for almost all $t\in S$,
  and $V_t(y(t, \cdot))> V_t(f(t, \cdot))$ for almost all $t\in S_0$.
  Without loss of generality, we may assume that
  $\mu(S_0)< \mu(S)$. Otherwise, the argument will be similar to that
  in Lemma \ref{lem: block 1}. By (A$^\prime_3$) and the fact that
  $V_t(y(t, \cdot))> V_t(f(t, \cdot))$ for almost all $t\in S_0$,
  there exist an atom $A$ of $\bigvee \mathfrak P_S$ and a
  sub-coalition $S_1$ of $S_0$ such that $y(t, \omega)> 0$ for all
  $\omega\in A$ and almost all $t\in S_1$. Let $\{c_m\}$ be a
  sequence in $(0, 1)$ converging to $0$. For each $m\geq 1$, we
  define a function $y_m:S_1\times \Omega\to Y_+$ such that $y_m
  (t,\omega)=(1-c_m) y(t, \omega)$. Then $y_m(t, \cdot)$ is
  $\bigvee \mathfrak P_S$-measurable for almost all $t\in S_1$. By an
  argument similar to that in the proof of Lemma \ref{lem: block 1},
  it can be shown that there is a sub-coalition $S_m$ of $S_1$ such
  that $V_t(y_m(t, \cdot))> V_t(f(t, \cdot))$ for almost all $t\in
  S_m$. Note that the function $b:A\to Y_+$, defined by $b(\omega)
  = \frac{c_m}{2}\int_{S_m} y(\cdot, \omega)d\mu$, is $\bigvee
  \mathfrak P_S$-measurable. Define a function $\hat y: (S\setminus
  S_0)\times \Omega\to Y_+$ by
  \[
  \hat y(t, \omega) = \left\{
  \begin{array}{ll}
  y(t, \omega)+ \frac{b(\omega)}{\mu(S\setminus S_0)}, & \mbox{if
  $(t, \omega)\in (S\setminus S_0)\times A$;}\\[0.5em]
  y(t, \omega), & \mbox{otherwise.}
  \end{array}
  \right.
  \]
  Furthermore define another function $h: S\times \Omega \to Y_+$ by
  \[
  h(t, \omega) = \left\{
  \begin{array}{ll}
  \hat y(t, \omega), & \mbox{if $(t, \omega)\in (S\setminus S_0)
  \times \Omega$;}\\[0.5em]
  y(t, \omega), & \mbox{if $(t, \omega)\in (S_0\setminus S_m)
  \times \Omega$;}\\[0.5em]
  y_{m}(t, \omega), & \mbox{if $(t, \omega)\in S_m\times \Omega$.}
  \end{array}
  \right.
  \]
  Then, $\hat y(t, \cdot)$ is $\bigvee \mathfrak P_S$-measurable
  and by (A$^\prime_3$), $V_t(\hat y(t, \cdot))> V_t(f(t, \cdot))$
  for almost all $t\in S\setminus S_0$. It follows that $h(t, \cdot)$
  is $\bigvee \mathfrak P_S$-measurable and $V_t(h(t, \cdot))> V_t
  (f(t, \cdot))$ for almost all $t\in S$, and $\int_S h(\cdot,
  \omega)d\mu\leq \int_S a(\cdot, \omega)d\mu$ for each $\omega\in
  \Omega$. Next, for each $m\geq 1$, define a function $h_m:
  S\times \Omega\to Y_+$ by $h_m(t,\omega)= (1-c_m) h(t,
  \omega)+ \frac{c_m}{2} a(t, \omega)$. Clearly, $h_m(t, \cdot)$ is
  $\bigvee \mathfrak P_S$-measurable for almost all $t\in S$, and
  $h_m(t, \omega)\gg 0$ for all $(t,\omega)\in S \times \Omega$.
  Applying an argument similar to that in the proof of Lemma
  \ref{lem: block 1}, one can find an increasing sequence $\{R_m\}
  \subseteq \Sigma_S$ such that $\bigcup_m R_m\sim S$ and $V_t
  (h_m(t, \cdot))> V_t(f(t, \cdot))$ for almost all $t\in R_m$.
  Finally, for each $m\ge 1$, consider the function $g_m: S\times
  \Omega\to Y_+$ defined by
  \[
  g_m(t, \omega) = \left\{
  \begin{array}{ll}
  h(t, \omega), & \mbox{if $(t,\omega) \in (S\setminus R_m)\times
  \Omega$;}\\[0.5em]
  h_{m}(t, \omega), & \mbox{if $(t, \omega)\in R_m\times \Omega$.}
  \end{array}
  \right.
  \]
  Following from the steps at the end of the proof of Lemma
  \ref{lem: block 1}, it can be verified that the conclusion of
  this lemma is true when $m$ is sufficiently large. Hence, the
  proof is completed by selecting such an $m$ and setting $S^\prime
  = R_m$ and $g= g_m$.
  \end{proof}

  \begin{theorem}\label{thm:finedomonated}
  Let an exactly feasible allocation $f$ be not in the $NY$-strong
  fine core of $\mathcal E$. Under \emph{(A$_1$)-(A$_2$)},
  \emph{(A$_3^\prime$)-(A$_4^\prime$)} and \emph{(A$_5$)}, for any
  $0< \epsilon< \mu(T)$, there is a coalition $S$ with $\mu(S)=
  \epsilon$ which $NY$-strongly fine blocks $f$.
  \end{theorem}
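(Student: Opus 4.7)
\emph{Proof plan.} Since $f$ is not in the $NY$-strong fine core, some coalition $S_0$ $NY$-strongly fine blocks it. The first step is to invoke Lemma~\ref{lem: block 3} on $S_0$, which (under (A$_1$)-(A$_2$), (A$_3^\prime$), (A$_5$)) produces a sub-coalition $S_0^\prime$ of $S_0$ and an $S_0$-assignment $g$ with $g(t,\cdot)$ being $\bigvee \mathfrak P_{S_0}$-measurable and $V_t(g(t,\cdot)) > V_t(f(t,\cdot))$ for almost all $t \in S_0$, $g(t,\omega) \gg 0$ on $S_0^\prime \times \Omega$, and integral surplus $\int_{S_0}(a-g)d\mu \gg 0$ pointwise in $\omega$. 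Fix $\epsilon \in (0, \mu(T))$. The strategy now splits into a \emph{shrinking case} ($\epsilon \le \mu(S_0)$) and an \emph{extending case} ($\epsilon > \mu(S_0)$).

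\emph{Shrinking case.} View $a-g$ as a Bochner integrable map from $S_0$ into the Banach space $Y^\Omega$, apply Lemma~\ref{lem:lyapunov} to see that the closure of $\{(\mu(B),\int_B(a-g)d\mu) : B \in \Sigma_{S_0}\}$ in $\mathbb{R} \times Y^\Omega$ is convex, and obtain as a convex combination the point $(\epsilon, \tfrac{\epsilon}{\mu(S_0)}\int_{S_0}(a-g)d\mu)$. Since its second coordinate lies in the (open) interior of $Y_+^\Omega$, a short approximation yields $S \subseteq S_0$ with $\mu(S) = \epsilon$ and $\int_S(a-g)d\mu \gg 0$; by running Lyapunov separately on each $T_{\mathcal Q}$ with $\mathcal Q \in \mathfrak P_{S_0}$ (a finite family, as $\Omega$ is finite), one can further arrange $\mathfrak P_S = \mathfrak P_{S_0}$ so that $\bigvee \mathfrak P_S$-measurability of $g$ is preserved. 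Setting $h(t,\omega) := g(t,\omega) + \frac{1}{\mu(S)}\int_S(a-g)(\cdot,\omega)d\mu$ defines an $S$-exactly feasible, $\bigvee \mathfrak P_S$-measurable assignment that is pointwise $\ge g \ge 0$, and by (A$_3^\prime$) strictly $V_t$-preferable to $g(t,\cdot)$ (and hence to $f(t,\cdot)$) for almost all $t \in S$. Thus $S$ $NY$-strongly fine blocks $f$.

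\emph{Extending case.} Choose $R \subseteq T \setminus S_0$ with $\mu(R) = \epsilon - \mu(S_0)$ (possible by atomlessness of $T$ and $\epsilon < \mu(T)$), set $S := S_0 \cup R$, and define $h := g$ on $S_0$ and $h(t,\omega) := f(t,\omega) + \delta(\omega)$ on $R$, where
\[
\delta(\omega) := \frac{1}{\mu(R)}\Bigl[\int_{S_0}(a-g)(\cdot,\omega)d\mu + \int_R(a-f)(\cdot,\omega)d\mu\Bigr]
\]
is the correction forced by $S$-exact feasibility. Measurability of $h$ with respect to $\bigvee \mathfrak P_S$ (using $\bigvee \mathfrak P_{S_0} \subseteq \bigvee \mathfrak P_S$ and $\mathcal F_t \subseteq \bigvee \mathfrak P_S$ for $t \in R$) and strict utility improvement on the sub-coalition $S_0$ are both immediate; once $\delta(\omega) \ge 0$ at every $\omega$, (A$_3^\prime$) delivers $V_t(h(t,\cdot)) \ge V_t(f(t,\cdot))$ on $R$. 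The main obstacle is therefore to arrange $\delta \ge 0$: starting from the identity $\int_{T \setminus S_0}(a-f)d\mu = \int_{S_0}(f-a)d\mu$ (exact feasibility of $f$), apply Lemma~\ref{lem:lyapunov} to $a-f$ on $T \setminus S_0$ to select $R$ within the resulting closed convex range of achievable $\int_R(a-f)d\mu$; if this still leaves $\delta$ pointwise negative in some coordinate, invoke concavity (A$_4^\prime$) to replace $g$ by a convex combination $(1-\lambda)g + \lambda g^\prime$ with a perturbed blocker $g^\prime$ (obtained by shrinking $g$ slightly toward $f$ on $S_0$), reshaping the strictly positive $S_0$-surplus in the required direction. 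This coupled selection of $R$ and adjustment of $g$, made possible by Lyapunov convexity and the interior-point property of $Y_+$, is the crux of the proof.
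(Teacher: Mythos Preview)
Your shrinking case is essentially the paper's: slice each information class $S_0\cap T_{\mathcal Q}$ proportionally via Lyapunov, assemble a sub-coalition $S$ of the right measure with $\bigvee\mathfrak P_S=\bigvee\mathfrak P_{S_0}$ and $\int_S(a-g)\,d\mu\gg 0$, then absorb the surplus uniformly. The details you gloss over (hitting $\mu(S)=\epsilon$ exactly rather than only in the limit, and carrying the null classes $\mathcal Q\in\mathfrak P_{S_0}\setminus\mathfrak P(S_0)$ along wholesale) are routine and handled in the paper by the $F_n^{\mathcal Q}\Delta E_n^{\mathcal Q}$ adjustment.

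The extending case, however, has a real gap. Your correction
\[
\delta(\omega)=\frac{1}{\mu(R)}\Bigl[\int_{S_0}(a-g)(\cdot,\omega)\,d\mu+\int_R(a-f)(\cdot,\omega)\,d\mu\Bigr]
\]
need not be $\ge 0$. By exact feasibility of $f$, a Lyapunov choice of $R\subseteq T\setminus S_0$ with $\mu(R)=\epsilon-\mu(S_0)$ forces $\int_R(a-f)\,d\mu$ close to $-\tfrac{\mu(R)}{\mu(T\setminus S_0)}\int_{S_0}(a-f)\,d\mu$; hence $\mu(R)\delta$ is close to $z-\tfrac{\mu(R)}{\mu(T\setminus S_0)}\int_{S_0}(a-f)\,d\mu$, and as $\epsilon\to\mu(T)$ this tends to $\int_{S_0}(f-g)\,d\mu$, which has no sign control whatsoever. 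Lyapunov gives you no extra freedom here: once $\mu(R)$ is fixed, the convex range pins $\int_R(a-f)\,d\mu$ near the proportional value.

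Your fallback (``replace $g$ by a convex combination'') is therefore not a contingency but the heart of the argument, and the weight cannot be left unspecified: it must be exactly $\alpha=1-\tfrac{\mu(R)}{\mu(T\setminus S_0)}$, so that the $\int_{S_0}(a-f)$ contribution cancels against the Lyapunov-approximated $\int_R(a-f)$ contribution via the identity $\int_T(a-f)\,d\mu=0$. The paper carries this out with a cleaner architecture: on $S_0$ it sets $g_\alpha=\alpha g+(1-\alpha)f+\tfrac{1}{\mu(S_0)}(\alpha z-b_{n_1})$, where $b_{n_1}$ is the small Lyapunov residual (absorbed because $\alpha z\gg 0$), and on the added set $B_{n_1}$ it leaves $f$ \emph{unchanged}. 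Concavity (A$_4'$) gives strict improvement on $S_0$ from the convex combination plus the strictly positive bump, and on $B_{n_1}$ one has $V_t(y_\alpha)=V_t(f)$ trivially, which already suffices for $NY$-strong fine blocking. Putting the correction on $S_0$ (where strictness is already in hand) rather than on $R$ (where you would first have to manufacture $\delta\ge 0$) is precisely what makes the proof close.
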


  \begin{proof}
  Suppose that $f$ is $NY$-strongly fine blocked by a coalition $S$.
  By Lemma $\ref{lem: block 3}$, there are a sub-coalition $S^\prime$
  of $S$ and an $S$-assignment $g$ such that (i)-(iii)
  of Lemma \ref{lem: block 3} hold. Define a function $z:\Omega\to
  Y_+$ such that  for all $\omega\in \Omega$,
  \begin{eqnarray} \label{eqn:zomega}
  z(\omega)= \int_{S}(a(\cdot, \omega)-g(\cdot, \omega))d\mu.
  \end{eqnarray}
  Then $z(\omega)\gg 0$ for all $\omega\in \Omega$. For any fixed
  $\mathcal Q\in \mathfrak P(S)$, by Lemma \ref{lem:lyapunov},
  \[
  H_{\mathcal Q}= {\rm cl}\left\{\left(\mu(E^{\mathcal Q}),
  \int_{E^{\mathcal Q}}(a-g) d \mu\right)\in \mathbb R\times
  Y^\Omega: E^{\mathcal Q}\in \Sigma_{S\cap T_\mathcal Q}\right\}
  \]
  is convex. For any given $\delta \in (0, 1)$, there
  is a sequence $\{E_n^\mathcal Q\}\subseteq \Sigma_{S\cap
  T_{\mathcal Q}}$ such that
  \[
  \lim_{n\to \infty} \left(\mu(E_n^\mathcal Q), \int_{E_n^\mathcal Q}
  (a(\cdot, \omega)-g(\cdot,\omega))d\mu\right)=\delta (\mu(S\cap
  T_{\mathcal Q}), \ z^\mathcal Q(\omega))
  \]
  for all $\omega\in \Omega$, where $z^\mathcal Q(\omega)=
  \int_{S\cap T_{\mathcal Q}}(a(\cdot, \omega) -g(\cdot,\omega))d\mu$.
  Since $\mu$ is atomless, we can select a sequence
  $\{F_n^\mathcal Q\}\subseteq \Sigma_{S\cap T_{\mathcal Q}}$
  such that $\mu(F_n^\mathcal Q)= \delta \mu(S\cap T_{\mathcal Q})$
  and $\mu(F_n^\mathcal Q\Delta E_n^\mathcal Q)=|\delta \mu(S\cap
  T_{\mathcal Q})- \mu(E_n^\mathcal Q)|$. Indeed, if
  $\mu(E_n^\mathcal Q)\ge \delta \mu(S\cap T_{\mathcal Q})$, we
  select any $F_n^\mathcal Q\subseteq E_n^\mathcal Q$ with
  $\mu(F_n^\mathcal Q)= \delta \mu(S\cap T_{\mathcal Q})$; Otherwise,
  we first select $C_n^\mathcal Q\subseteq (S\cap T_\mathcal Q)
  \setminus E_n^\mathcal Q$ with $\mu(C_n^\mathcal Q)= \delta
  \mu(S\cap T_\mathcal Q)- \mu(E_n^\mathcal Q)$ and put
  $F_n^\mathcal Q=E_n^\mathcal Q\cup C_n^\mathcal Q$. As a result,
  $\lim_{n\to \infty} \mu(F_n^\mathcal Q\Delta E_n^\mathcal Q)=0$,
  which implies that $\lim_{n\to \infty}
  \int_{F_n^\mathcal Q}(a(\cdot,\omega)-g(\cdot,\omega)) d\mu=
  \delta z^\mathcal Q(\omega)$ for all $\omega \in \Omega$. Let
  \[
  F_n= \left(\bigcup_{\mathcal Q\in \mathfrak P(S)} F_n^\mathcal Q
  \right)\bigcup \left(\bigcup_{\mathcal Q\in \mathfrak P_S\setminus
  \mathfrak P(S)}(S\cap T_\mathcal Q)\right)
  \]
  for all $n\in \mathbb N$. Then $\mu(F_n)= \delta \mu(S)$ and
  $\lim_{n \to \infty} \int_{F_n}(a(\cdot,\omega)-g(\cdot,\omega))
  d\mu= \delta z(\omega)$ for all $\omega \in \Omega$. Hence there
  is an $n_0$ such that $ \int_{F_{n_0}} (a(\cdot,\omega)-g(\cdot,
  \omega))d\mu\gg 0$ for all $\omega \in \Omega$. Since
  $\bigvee \mathfrak P_{F_{n_0}}=\bigvee \mathfrak P_S$, the
  function $z_{n_0}: \Omega\to Y_+$, defined by $z_{n_0}(\omega)=
  \int_{F_{n_0}} (a(\cdot, \omega)-g(\cdot,\omega))d\mu$,
  is $\bigvee \mathfrak P_{F_{n_0}}$-measurable. Define a
  function $\hat{g}:F_{n_0} \times \Omega\to Y_+$ such that
  $\hat{g}(t, \omega)= g(t, \omega)+ \frac{z_{n_0}(\omega)}
  {\delta\mu(S)}$. By (A$_3$), $f$ is $NY$-strongly fine blocked
  by $F_{n_0}$ via $\hat{g}$, which proves the theorem for
  $\epsilon\le \mu(S)$. If $\mu(S)= \mu(T)$, the proof has been
  completed. Otherwise, $\mu(T\setminus S)>0$. Let $R=T\setminus S$.
  Again by Lemma \ref{lem:lyapunov},
  \[
  G_{\mathcal Q}= {\rm cl} \left\{\left(\mu(B^{\mathcal Q}),
  \int_{B^{\mathcal Q}}(a -f) d\mu \right)\in \mathbb R\times
  Y^\Omega: B^{\mathcal Q}\in \Sigma_{R\cap T_{\mathcal Q}}\right\}
  \]
  is convex for all $\mathcal Q\in \mathfrak P(R)$. Given any
  $\alpha\in (0, 1)$ and $\mathcal Q\in \mathfrak P(R)$, applying
  an argument similar to the previous one, one can find a sequence
  $\{B_n^\mathcal Q\}\subseteq \Sigma_{R\cap T_\mathcal Q}$ such
  that $\mu(B_n^\mathcal Q)= (1-\alpha)\mu(R\cap T_Q)$ and
  for all $\omega \in \Omega$,
  \[
  \lim_{n\to \infty}\int_{B_n^\mathcal Q}(a(\cdot,\omega)-f(\cdot,
  \omega))d\mu= (1-\alpha)\kappa^\mathcal Q(\omega),
  \]
  where
  $\kappa^\mathcal Q(\omega)= \int_{R\cap T_\mathcal Q} (a(\cdot,
  \omega)- f(\cdot, \omega))d\mu$. Let
  \[
  B_n= \left(\bigcup_{\mathcal Q\in \mathfrak P(R)} B_n^\mathcal Q
  \right)\bigcup \left(\bigcup_{\mathcal Q\in \mathfrak P_R\setminus
  \mathfrak P(R)}(R\cap T_\mathcal Q)\right)
  \]
  for all $n\in \mathbb N$ and $\kappa(\omega)= \int_R (a(\cdot,\omega)
  -f(\cdot,\omega))d\mu$ for all $\omega \in \Omega$. For all $n\geq
  1$, define a function $b_n: \Omega\to Y_+$ such that
  \begin{eqnarray} \label{eqn:bn}
  b_n(\omega)=(1-\alpha)\kappa(\omega)-\int_{B_n}(a(\cdot,\omega)-
  f(\cdot,\omega))d\mu.
  \end{eqnarray}
  Then $b_n$ is $\bigvee \mathfrak P_{B_n}$-measurable for all $n\geq
  1$, and $\|b_n(\omega)\|\rightarrow 0$ as $n\to \infty$ for all
  $\omega\in \Omega$. Choose an $n_1$ satisfying $\alpha z(\omega)-
  b_{n_1}(\omega)\gg 0$ for all $\omega\in \Omega$, define
  $g_{\alpha}:S\times \Omega\to Y_+$ such that
  \[
  g_{\alpha}(t, \omega)= \alpha g(t, \omega)+ (1- \alpha) f(t,
  \omega)+ \frac{1}{\mu(S)}(\alpha z(\omega)- b_{n_1}(\omega)),
  \]
  and take $\widetilde{S}=S\cup B_{n_1}.$ Note that
  $\mu(\widetilde{S})= \mu(S)+ (1- \alpha)\mu(T\setminus S)$ and
  $g_{\alpha}$ is $\bigvee \mathfrak P_{\widetilde S}$-measurable
  for almost all $t\in S$. By (A$_3^\prime$) and (A$_4^\prime$),
  $V_{t}(g_{\alpha}(t, \cdot))> V_{t}(f(t, \cdot))$ for almost
  all $t\in S$. It remains to verify that $f$ is $NY$-strongly
  fine blocked by $\widetilde{S}$. To this end, define
  $y_{\alpha}: \widetilde{S}\times \Omega\to Y_+$ by
  \[
  y_\alpha(t, \omega) = \left\{
  \begin{array}{ll}
  g_\alpha(t, \omega), & \mbox{if $(t, \omega)\in
  S\times \Omega$;}\\[0.5em]
  f(t, \omega), & \mbox{if $(t, \omega)\in B_{n_1}\times\Omega$.}
  \end{array}
  \right.
  \]
  Then $y_{\alpha}(t, \cdot)$ is $\bigvee \mathfrak
  P_{\widetilde{S}}$-measurable and $V_{t}(y_{\alpha}(t, \cdot))
  \geq V_{t}(f(t, \cdot))$ for almost all $t\in \widetilde{S}$, and
  $V_{t} (y_{\alpha}(t, \cdot))> V_{t}(f(t, \cdot))$ for almost
  all $t\in S$. Using (\ref{eqn:zomega}) and (\ref{eqn:bn}), one
  has
  \[
  \int_{\widetilde{S}}(a(\cdot,\omega)-y_{\alpha}(\cdot,\omega))
  d\mu=(1- \alpha)\int_T (a(\cdot,\omega)-f(\cdot,\omega))
  d\mu= 0
  \]
  for all $\omega\in \Omega$. This completes the proof.
  \end{proof}

  An allocation $f$ in $\mathcal E$ is \emph{NY-fine blocked by a
  coalition $S$} \cite{Yannelis:91} if there is an $S$-exactly
  feasible assignment $g$ such that $g(t,\cdot)$ is $\bigvee
  \mathfrak P_S$-measurable and $V_t(g(t,\cdot))> V_t(f(t,\cdot))$
  for almost all $t\in S$. The \emph{NY-fine core} \cite{Yannelis:91}
  of $\mathcal E$ is the set of exactly feasible allocations which
  are not $NY$-fine blocked by any coalition of $\mathcal E$.

  \begin{remark}\label{rem:cominfeco}
  Under (A$_1$)-(A$_3$), (A$_3^\prime$)-(A$_4^\prime$) and (A$_5$),
  an analogous result can be derived for allocations not in the
  $NY$-fine core of $\mathcal E$ by modifying the functions
  $g_\alpha$ and $y_\epsilon$ in the following way:
  \[
  g_{\alpha}(t, \omega)= \alpha g(t, \omega)+ (1- \alpha) f(t,
  \omega)+ \frac{1}{\mu(S)}(\alpha z(\omega)- b_{n_1}(\omega)- x),
  \]
  and
  \[
  y_\alpha(t, \omega) = \left\{
  \begin{array}{ll}
  g_\alpha(t, \omega), & \mbox{if $(t, \omega)\in
  S\times \Omega$;}\\[0.5em]
  f(t, \omega)+ \frac{x}{\mu(B_{n_1})}, & \mbox{if $(t, \omega)
  \in B_{n_1}\times \Omega$,}
  \end{array}
  \right.
  \]
  where $x\gg 0$ such that $\alpha z(\omega)- b_{n_1}(\omega)-
  x\gg 0$.
  \end{remark}

  \begin{definition}
  An allocation $f$ in $\mathcal E$ is \emph{NY-privately blocked
  by a coalition $S$} \cite{Yannelis:91} if there exists an
  $S$-exactly feasible allocation $g$ such that $V_t(g(t,\cdot))
  > V_t(f(t,\cdot))$ for almost all $t\in S$. The \emph{NY-private
  core} \cite{Yannelis:91} of $\mathcal E$ is the set of exactly
  feasible allocations which are not $NY$-privately blocked by
  any coalition of $\mathcal E$.
  \end{definition}

  Now, we are ready to present one of the main results of this
  paper, which completely answers a question of Pesce in
  \cite[Remark 1]{Pesce:10}.

  \begin{theorem}\label{thm:privatedomonated}
  Assume that $f$ is an exactly feasible allocation in $\mathcal E$
  which is not in the $NY$-private core and $0< \epsilon< \mu(T)$.
  Under \emph{(A$_1$)-(A$_3$)}, \emph{(A$_4^\prime$)} and
  \emph{(A$_5$)}, $f$ is $NY$-privately blocked by some coalition
  $S$ with $\mu(S)= \epsilon$.
  \end{theorem}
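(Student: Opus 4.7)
The plan is to adapt the two-case structure of Theorem \ref{thm:finedomonated} to the finer $\mathcal F_t$-measurability requirement of $NY$-private blocking, performing every slack redistribution type by type so that the blocking allocation stays $\mathcal F_t$-measurable. I would begin by invoking Lemma \ref{lem: block 1} (valid under (A$_1$)-(A$_2$), (A$_5$)) with parameter $\alpha$ close to $1$ applied to the coalition $S_0$ that $NY$-privately blocks $f$ and to any $S_0$-exactly feasible witness. This yields an $S_0$-allocation $g$ and sub-coalition $S_0'\subseteq S_0$ with $g\gg 0$ on $S_0'\times\Omega$, $V_t(g)>V_t(f)$ a.e.\ on $S_0$, $z(\omega):=\int_{S_0}(a(\cdot,\omega)-g(\cdot,\omega))d\mu\gg 0$, and $\mathfrak P(S_0')=\mathfrak P(S_0)$. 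Because $g$ is an $S_0$-allocation, each $g(t,\cdot)$ is automatically $\mathcal F_t$-measurable, which is essential for what follows.

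For the case $\epsilon\leq \mu(S_0)$, set $\delta=\epsilon/\mu(S_0)$. For each $\mathcal Q\in\mathfrak P(S_0)$, I would apply Lemma \ref{lem:lyapunov} to $a-g:S_0\cap T_\mathcal Q\to Y^\Omega$ to get $E_n^\mathcal Q\subseteq S_0\cap T_\mathcal Q$ with $(\mu(E_n^\mathcal Q),\int_{E_n^\mathcal Q}(a-g)d\mu)\to \delta(\mu(S_0\cap T_\mathcal Q),z^\mathcal Q)$ where $z^\mathcal Q=\int_{S_0\cap T_\mathcal Q}(a-g)d\mu$, and adjust (as in Theorem \ref{thm:finedomonated}) to $F_n^\mathcal Q$ with $\mu(F_n^\mathcal Q)=\delta\mu(S_0\cap T_\mathcal Q)$. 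Setting $F_n=\bigcup_{\mathcal Q\in\mathfrak P(S_0)}F_n^\mathcal Q\cup\bigcup_{\mathcal Q\in\mathfrak P_{S_0}\setminus\mathfrak P(S_0)}(S_0\cap T_\mathcal Q)$ gives $\mu(F_n)=\epsilon$. To restore exact feasibility while preserving $\mathcal F_t$-measurability, set $w_\mathcal Q(\omega)=\int_{F_n\cap T_\mathcal Q}(a-g)d\mu$, which is $\mathcal F_\mathcal Q$-measurable, and define $\hat g(t,\omega)=g(t,\omega)+w_\mathcal Q(\omega)/\mu(F_n\cap T_\mathcal Q)$ for $t\in F_n\cap T_\mathcal Q$. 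Then $\hat g(t,\cdot)$ is $\mathcal F_t$-measurable, $\int_{F_n}\hat g\,d\mu=\int_{F_n}a\,d\mu$, and monotonicity (A$_3$) combined with the continuity of $V_t$ keeps $V_t(\hat g)>V_t(f)$ a.e.\ on $F_n$ for the chosen~$n$.

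For the case $\mu(S_0)<\epsilon<\mu(T)$, I would mimic the extension step of Theorem \ref{thm:finedomonated}, setting $R=T\setminus S_0$, $\alpha=(\mu(T)-\epsilon)/\mu(R)\in(0,1)$, and selecting $B_n^\mathcal Q\subseteq R\cap T_\mathcal Q$ type-by-type via Lemma \ref{lem:lyapunov} with $\mu(B_n^\mathcal Q)=(1-\alpha)\mu(R\cap T_\mathcal Q)$ and $\int_{B_n^\mathcal Q}(a-f)d\mu\to(1-\alpha)\kappa^\mathcal Q$. Form $B_n$ analogously and let $\widetilde S=S_0\cup B_n$ with $\mu(\widetilde S)=\epsilon$. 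On $S_0$ the blocking allocation would be $\alpha g(t,\omega)+(1-\alpha)f(t,\omega)$ plus an $\mathcal F_\mathcal Q$-measurable correction, and on $B_n$ it would be $f(t,\omega)$ plus a (type-indexed) $\mathcal F_\mathcal Q$-measurable correction, the two corrections being calibrated type-by-type so that their combined integral over $\widetilde S\cap T_\mathcal Q$ equals $\alpha z^\mathcal Q-b_n^\mathcal Q$ and so that each is strictly positive for large $n$. Concavity (A$_4'$) delivers $V_t(\alpha g+(1-\alpha)f)\geq \alpha V_t(g)+(1-\alpha)V_t(f)>V_t(f)$ a.e.\ on $S_0$, and (A$_3$) with the strictly positive increments on $B_n$ (obtained by borrowing the Remark \ref{rem:cominfeco} trick of subtracting a small constant $x\gg 0$ on $S_0$ and redistributing it on $B_n$) gives strict improvement there, while exact feasibility holds because $\int_T(a-f)d\mu=0$.

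The main obstacle is the per-type sign of the slack used in these redistributions: unlike the $NY$-strong fine case of Theorem \ref{thm:finedomonated}, where the entire slack could be dumped as a single $\bigvee\mathfrak P_S$-measurable function, $\mathcal F_t$-measurability forces the slack to be split into $\mathcal F_\mathcal Q$-measurable pieces $w_\mathcal Q$, and the naive pieces need not be nonnegative even though $\sum_\mathcal Q w_\mathcal Q\gg 0$. I would expect to control this by exploiting that $g\gg 0$ on $S_0'\cap T_\mathcal Q$ (with $\mathfrak P(S_0')=\mathfrak P(S_0)$) to apply Lemma \ref{lem: block 2} type-by-type, shifting $g$ slightly towards $f$ inside problematic types — which preserves strict blocking via concavity — until every $w_\mathcal Q$ is dominated by the positive contribution that $c_m\int_{S_m\cap T_\mathcal Q} g\,d\mu$ contributes in the Lemma \ref{lem: block 1} construction, at which point the type-wise distribution step goes through without sign issues.
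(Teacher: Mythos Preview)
Your two-case architecture and the recognition that every correction must be done type by type are correct, and you have put your finger on the real obstacle: once the total slack $z(\omega)=\int_{S_0}(a-g)\,d\mu\gg 0$ is broken into type-wise pieces $w_{\mathcal Q}$, the individual $w_{\mathcal Q}$ need not lie in $Y_+$, so $\hat g(t,\omega)=g(t,\omega)+w_{\mathcal Q}(\omega)/\mu(F_n\cap T_{\mathcal Q})$ can leave the consumption set and monotonicity is of no help. However, your proposed remedy does not close this gap. Shifting $g$ towards $f$ on a problematic type $\mathcal Q$ via Lemma~\ref{lem: block 2} replaces $\int_{S_0\cap T_{\mathcal Q}}(a-g)\,d\mu$ by $r\int_{S_0\cap T_{\mathcal Q}}(a-g)\,d\mu+(1-r)\int_{S_0\cap T_{\mathcal Q}}(a-f)\,d\mu$, and the second summand is again signless; no choice of $r$ forces positivity. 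The ``positive contribution $c_m\int_{S_m\cap T_{\mathcal Q}}g\,d\mu$'' from the Lemma~\ref{lem: block 1} construction is a contribution to the \emph{total} slack, not a per-type cushion, and there is no a~priori bound on how negative an individual $w_{\mathcal Q}$ can be. The same issue infects your extension step, where the proposed correction involves $\alpha z^{\mathcal Q}-b_n^{\mathcal Q}$ with signless $z^{\mathcal Q}$.

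The paper sidesteps the sign problem by a genuinely different device. It does \emph{not} invoke Lemma~\ref{lem: block 1} to create positive total slack from the exactly feasible witness $g$. Instead, for each type $\mathcal Q$ it builds a perturbed allocation $y^{\mathcal Q}$ carrying a uniform interior cushion $c_{m_0}e\gg 0$ and satisfying
\[
\int_{S\cap T_{\mathcal Q}}y^{\mathcal Q}\,d\mu=\int_{S\cap T_{\mathcal Q}}\bigl((1-c_{m_0})g+c_{m_0}a\bigr)\,d\mu.
\]
Hence the per-type deviation $z^{\mathcal Q}=\int_{S\cap T_{\mathcal Q}}(y^{\mathcal Q}-a)\,d\mu=(1-c_{m_0})\int_{S\cap T_{\mathcal Q}}(g-a)\,d\mu$ is a scalar multiple of the original per-type deviation of the \emph{exactly} feasible $g$, so $\sum_{\mathcal Q}z^{\mathcal Q}=0$. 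Lyapunov is then applied to $y^{\mathcal Q}-a$ type by type to approximate $\lambda z^{\mathcal Q}$; the small $\mathcal Q$-measurable error is absorbed by the cushion $c_{m_0}e$, and exact feasibility on the shrunken coalition follows because the (now exactly attained) per-type targets $\lambda z^{\mathcal Q}$ still sum to zero. The idea missing from your proposal is precisely this: do not chase positive per-type slack; engineer per-type targets proportional to the original zero-summing deviations and pre-build a uniform interior buffer to swallow the Lyapunov approximation error.
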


  \begin{proof}
  Since $f$ is not in the $NY$-private core of $\mathcal E$, there
  exist a coalition $S$ and an $S$-exactly feasible allocation $g$
  such that $V_t(g(t,\cdot))> V_t(f(t,\cdot))$ for almost all
  $t\in S$. For all $\omega \in \Omega$ and $\mathcal Q\in
  \mathfrak P(S)$, let
  \[
  e_\mathcal Q(\omega) = \frac{1}{\mu(S\cap T_\mathcal Q)}\int_{S
  \cap T_\mathcal Q}a(\cdot, \omega)d\mu.
  \]
  Choose an $e\gg 0$ such that $e \le \frac{e_\mathcal Q(\omega)}
  {3}$ for all $\omega\in \Omega$ and $\mathcal Q\in \mathfrak
  P(S)$, an open ball $U$ with center $0$ and radius $\epsilon> 0$
  such that $e- U\subseteq {\rm int}Y_+$ and a  $\lambda\in (0,1)$.
  Let $\{c_m\}$ be a sequence in $(0,1)$ such that $c_m\to 0$ as
  $m\to \infty$. Pick an arbitrary element $\mathcal Q \in
  \mathfrak P(S)$, and define a function $g_m^\mathcal Q:
  (S\cap T_\mathcal Q)\times \Omega\to Y_+$ such that $g_m^\mathcal
  Q(t, \omega)= (1-c_m)g(t, \omega)+ c_m(e_\mathcal Q(\omega)- 2e)$.
  By an argument similar to that in Lemma \ref{lem: block 1}, one
  can find an increasing sequence $\{S_m^\mathcal Q\}\subseteq
  \Sigma_{S \cap T_\mathcal Q}$ such that $\bigcup_m S_m^\mathcal
  Q\sim S\cap T_\mathcal Q$, $\lim_{n\to \infty} ((S\cap T_\mathcal Q)
  \setminus S_m^\mathcal Q)= 0$ and $V_t(g_m^\mathcal Q(t, \cdot))>
  V_t(f(t, \cdot))$ for almost all $t\in S_m^\mathcal Q$. By absolute
  continuity of the Bochner integral, there is some $\delta> 0$
  such that
  \[
  \frac{2}{\mu(S\cap T_\mathcal Q)}\int_{R_\mathcal Q}\left(g(\cdot,
  \omega) - e_\mathcal Q(\omega)\right)d\mu \in U
  \]
  for all $R_\mathcal Q\in \Sigma_{S\cap T_\mathcal Q}$ with
  $\mu(R_\mathcal Q)< \delta$ and $\mathcal Q\in \mathfrak P(S)$.
  For each $\mathcal Q\in \mathfrak P(S)$, choose an $m_\mathcal Q$
  such that
  \[
  \mu\left(S_{m_\mathcal Q}^\mathcal Q\right)> \left(1-\frac{\lambda}
  {2}\right)\mu(S\cap T_\mathcal Q)
  \]
  and $\mu((S\cap T_\mathcal Q)\setminus S_{m_\mathcal Q}^\mathcal Q)<
  \delta.$ Let $m_0= \max\{m_\mathcal Q: \mathcal Q\in \mathfrak P(S)\}.$
  It follows that
  \[
  \frac{1}{\mu(S_{m_0}^\mathcal Q)}\int_{(S\cap T_\mathcal Q)\setminus
  S_{m_0}^\mathcal Q}\left(g(\cdot,\omega)- e_\mathcal Q(\omega)\right)
  d\mu\in U
  \]
  for all $\mathcal Q\in \mathfrak P(S)$.
  For each $\mathcal Q\in \mathfrak P(S)$ and $(t, \omega)\in
  S_{m_0}^\mathcal Q\times \Omega$, set
  \[
  x(t, \omega)= e_\mathcal Q(\omega)- \frac{1}{\mu(S_{m_0}^\mathcal
  Q)}\int_{(S\cap T_\mathcal Q)\setminus S_{m_0}^\mathcal Q}
  \left(g(\cdot, \omega)- e_\mathcal Q(\omega)\right)d\mu.
  \]
  Consider a function $y^{\mathcal Q}:(S\cap T_\mathcal Q)\times
  \Omega\to Y_+$ defined by
  \[
  y^{\mathcal Q}(t, \omega) = \left\{
  \begin{array}{ll}
  (1- c_{m_0}) g(t, \omega)+ c_{m_0} x(t, \omega),
  & \mbox{if $(t, \omega)\in S_{m_0}^\mathcal Q\times \Omega$;}
  \\[0.5em]
  g(t, \omega), & \mbox{otherwise.}
  \end{array}
  \right.
  \]
  Since $y^{\mathcal Q}(t, \omega)\gg g_{m_0}^\mathcal Q(t, \omega)+
  c_{m_0}e$ for all $(t, \omega)\in S_{m_0}^{\mathcal Q}\times
  \Omega$, by (A$_3$), $V_t(y^\mathcal Q(t,\cdot))> V_t(f(t,
  \cdot))$ for almost all $t\in S\cap T_\mathcal Q$ and
  $y^\mathcal Q$ is an $(S \cap T_\mathcal Q)$-allocation.
  Moreover,
  \begin{eqnarray} \label{eqn:integralyq}
  \int_{S\cap T_\mathcal Q} y^\mathcal Q(\cdot, \omega)d\mu
  = \int_{S\cap T_\mathcal Q}\left((1- c_{m_0}) g(\cdot, \omega)+
  c_{m_0} a(\cdot, \omega)\right)d\mu
  \end{eqnarray}
  for all $\omega\in \Omega$. By Lemma \ref{lem:lyapunov}, the set
  \[
  H_{\mathcal Q} = {\rm cl} \left\{\left(\mu(E^{\mathcal Q}),
  \int_{E^{\mathcal Q}}\left(y^{\mathcal Q}-a\right)
  d\mu \right)\in \mathbb R\times Y^\Omega:
  E^{\mathcal Q}\in \Sigma_{S\cap T_{\mathcal Q}}\right\}
  \]
  is convex. Using an argument similar to that in the proof
  of Theorem \ref{thm:finedomonated}, one can find a sequence
  $\{F_n^\mathcal Q\}\subseteq \Sigma_{S\cap T_{\mathcal Q}}$ such
  that $\mu(F_n^\mathcal Q)= \lambda\mu(S\cap T_{\mathcal Q})$ and
  for all $\omega \in \Omega$,
  \[
  \lim_{n\to \infty}\int_{F_n^\mathcal Q} (y^\mathcal Q(\cdot,\omega)-
  a(\cdot, \omega))d\mu = \lambda z^\mathcal Q(\omega),
  \]
  where
  \begin{eqnarray} \label{eqn:zq}
  z^\mathcal Q(\omega) = \int_{S\cap T_{\mathcal Q}} \left(y^\mathcal
  Q(\cdot,\omega)- a(\cdot,\omega)\right)d\mu.
  \end{eqnarray}
  The function $b_n^\mathcal Q: \Omega\to Y_+$, defined by
  \[
  b_n^\mathcal Q(\omega)= \lambda z^\mathcal Q(\omega)-
  \int_{F_n^\mathcal Q}\left(y^\mathcal Q(\cdot, \omega)-
  a(\cdot, \omega)\right) d\mu,
  \]
  is $\mathcal Q$-measurable for all $n\geq 1$ and $\|b_n^\mathcal
  Q(\omega)\|\to 0$ as $n\to \infty$ for all $\omega\in \Omega$. Note
  that $\min\left\{\mu(F_n^\mathcal Q\cap S_{m_0}^\mathcal Q):
  n\geq 1\right\}\geq \frac{\lambda}{2} \mu(S\cap T_{\mathcal Q})> 0$.
  Choose an $n_\mathcal Q$ such that $\frac{2 b_{n_\mathcal
  Q}^\mathcal Q(\omega)}{\lambda\mu(S\cap T_{\mathcal Q})} \in
  c_{m_0}U$ for all $\omega\in \Omega$. Then
  $c_{m_0} e+ \frac{b_{n_\mathcal Q}^\mathcal Q(\omega)}
  {\mu\left(F_{n_\mathcal Q}^\mathcal Q\cap S_{m_0}^\mathcal Q
  \right)} \gg 0$ for all $\omega\in \Omega$. Define a function
  $g^{\mathcal Q}: F_{n_\mathcal Q}^\mathcal Q\times \Omega\to
  Y_+$ such that
  \[
  g^{\mathcal Q}(t, \omega) = \left\{
  \begin{array}{ll}
  y^\mathcal Q(t, \omega)+ \frac{b_{n_\mathcal Q}^\mathcal Q(\omega)}
  {\mu\left(F_{n_\mathcal Q}^\mathcal
  Q\cap S_{m_0}^\mathcal Q\right)},
  & \mbox{if $(t, \omega)\in \left(F_{n_\mathcal Q}^\mathcal Q\cap
  S_{m_0}^\mathcal Q\right)\times \Omega$;}\\[0.5em]
  y^\mathcal Q(t, \omega), & \mbox{otherwise.}
  \end{array}
  \right.
  \]
  By (A$_3$) and the fact that $V_t\left(g_{m_0}^\mathcal Q(t,
  \cdot))> V_t(f(t, \cdot)\right)$ for almost all $t\in
  F_{n_\mathcal Q}^\mathcal Q\cap S_{m_0}^\mathcal Q$, we have
  $V_t(g^\mathcal Q(t, \cdot))> V_t(f(t, \cdot))$ for almost all
  $t\in F_{n_\mathcal Q}^\mathcal Q\cap S_{m_0}^\mathcal Q$. So,
  $g^\mathcal Q$ is $F_{n_\mathcal Q}^\mathcal Q$-allocation and
  $V_t(g^\mathcal Q(t, \cdot))> V_t(f(t, \cdot))$ for almost all
  $t\in F_{n_\mathcal Q}^\mathcal Q$. Furthermore,
  \begin{eqnarray} \label{eqn:lamdazq}
  \int_{F_{n_\mathcal Q}^\mathcal Q} (g^{\mathcal Q}(\cdot, \omega)-
  a(\cdot, \omega))d\mu = \lambda z^\mathcal Q(\omega)
  \end{eqnarray}
  for all $\omega\in \Omega$. Let $F= \bigcup\{ F_{n_\mathcal
  Q}^\mathcal Q: \mathcal Q\in \mathfrak P(S)\}$. So
  $\mu(F)= \lambda\mu(S)$. Define a function $h:F\times \Omega\to
  Y_+$ such that $h(t, \omega)= g^{\mathcal Q}(t, \omega)$ if
  $(t, \omega)\in F_{n_\mathcal Q}^\mathcal Q\times \Omega$. Then
  $h$ is an $F$-allocation and $V_t(h(t, \cdot))> V_t(f(t, \cdot))$
  for almost all $t\in F$. By (\ref{eqn:integralyq})-(\ref{eqn:lamdazq}),
  we have $\int_F (h(\cdot, \omega)- a(\cdot, \omega))d\mu = 0$
  for all $\omega\in \Omega$. Thus, $f$ is $NY$-privately blocked by
  $F$ via $h$. This proves the theorem for $\epsilon\leq
  \mu(S)$. If $\mu(S)= \mu(T)$, the proof has been completed. Otherwise,
  $\mu(T\setminus S)>0$. Let $S^\prime= \bigcup\{S_{m_0}^\mathcal Q:
  \mathcal Q\in \mathfrak P(S)\}$. Let $A= T\setminus S$ and
  $u= \frac{\lambda c_{m_0} \mu(S^\prime)e}{2(1- \lambda)\mu(A)}$.
  Again pick an arbitrary element $\mathcal Q\in \mathfrak P(A)$.
  By Lemma \ref{lem:lyapunov},
  \[
  G_{\mathcal Q} = {\rm cl} \left\{\left(\mu(B^{\mathcal Q}),
  \int_{B^{\mathcal Q}}\left(a-f- u\right) d\mu \right) \in
  \mathbb R\times Y^\Omega: B^{\mathcal Q}\in \Sigma_{A\cap
  T_\mathcal Q}\right\}
  \]
  is convex. Hence, there exists a sequence $\{B_k^\mathcal
  Q\}\subseteq \Sigma_{A\cap T_Q}$ such that $\mu(B_k^\mathcal Q)=
  (1-\lambda)\mu(A\cap T_\mathcal Q)$ and for all $\omega \in \Omega$,
  \[
  \lim_{k\to \infty}
  \int_{B_k^\mathcal Q} (a(\cdot, \omega)-f(\cdot,\omega)- u)d\mu=
  (1-\lambda)v^\mathcal Q(\omega),
  \]
  where
  \begin{eqnarray}\label{eqn:vQ}
  v^\mathcal Q(\omega) = \int_{A\cap T_\mathcal Q} (a(\cdot,\omega)-
  f(\cdot,\omega)- u)d\mu.
  \end{eqnarray}
  The function $d_k^\mathcal Q: \Omega\to Y_+$, defined by
  \[
  d_k^\mathcal Q(\omega) = (1- \lambda) v^\mathcal Q(\omega)-
  \int_{B_k^\mathcal Q}\left(a(\cdot,\omega)- f(\cdot,\omega)- u
  \right) d\mu,
  \]
  is $\mathcal Q$-measurable for all $k\geq 1$ and $\|d_k^\mathcal
  Q(\omega)\|\to 0$ as $k\to \infty$ for all $\omega\in \Omega$.
  Choose a $k_\mathcal Q$ such that
  $u- \frac{d_{k_\mathcal Q}^\mathcal Q (\omega)}{(1- \lambda)
  \mu(A\cap T_\mathcal Q)}\gg 0$
  for each $\omega\in \Omega$. It is obvious that the function
  $f^\mathcal Q: B_{k_\mathcal Q}^\mathcal Q\times \Omega\to Y_+$,
  defined by
  \[
  f^\mathcal Q (t, \omega) = f(t, \omega)+ u-
  \frac{d_{k_\mathcal Q}^\mathcal Q (\omega)}{(1- \lambda)
  \mu(A\cap T_\mathcal Q)},
  \]
  is an $B_{k_\mathcal Q}^\mathcal Q$-allocation. By (A$_3$),
  $V_t(f^\mathcal Q (t,\cdot))> V_t(f(t, \cdot))$ for almost
  all $t\in B_{k_\mathcal Q}^\mathcal Q$. Furthermore, for each
  $\omega \in\Omega$,
  \begin{eqnarray}\label{eqn:integrgalBk}
  \int_{B_{k_\mathcal Q}^\mathcal Q}(a(\cdot, \omega)- f^\mathcal
  Q(\cdot, \omega))d\mu= (1- \lambda)v^\mathcal Q(\omega).
  \end{eqnarray}
  Let $B= \bigcup \{B_{k_\mathcal Q}^\mathcal Q: \mathcal
  Q\in \mathfrak P(A)\}$. Then, $\mu(B)= (1- \lambda)\mu(A)$.
  Now, define a function $f_\lambda: B\times \Omega\to Y_+$
  such that $f_\lambda(t, \omega)= f^\mathcal Q(t, \omega)$
  if $(t,\omega)\in B_{k_\mathcal Q}^\mathcal Q\times
  \Omega$, and for any $\mathcal Q\in \mathfrak P(S)$, consider
  the function $\hat y^{\mathcal Q}:S\cap T_\mathcal Q\to Y_+$
  defined by
  \[
  \hat y^\mathcal Q(t, \omega) = \left\{
  \begin{array}{ll}
  y^\mathcal Q(t, \omega)- \frac{c_{m_0}}{2}e,
  & \mbox{if $(t, \omega)\in
  S_{m_0}^\mathcal Q\times \Omega$;}\\[0.5em]
  y^\mathcal Q(t, \omega), & \mbox{otherwise.}
  \end{array}
  \right.
  \]
  Since $\hat y^{\mathcal Q}(t, \omega)\gg g_{m_0}^\mathcal Q(t,
  \omega)+ \frac{c_{m_0}}{2}e$ for all $(t, \omega)\in
  S_{m_0}^{\mathcal Q} \times \Omega$, by (A$_3$), $V_t(\hat
  y^\mathcal Q(t, \cdot))> V_t(f(t, \cdot))$ for almost all
  $t\in S \cap T_{\mathcal Q}$. Note that $\hat y^\mathcal Q$ is
  an $(S\cap T_\mathcal Q)$-allocation. Take $\widehat S=
  \bigcup \{S\cap T_\mathcal Q: \mathcal Q\in \mathfrak P(S)\}$.
  Then, $\mu(\widehat S)= \mu(S)$. Define
  $y_\lambda: \widehat S\times \Omega\to Y_+$ by $y_\lambda (
  t, \omega)= \hat y^\mathcal Q(t, \omega)$ if $(t, \omega)\in
  (S\cap T_\mathcal Q)\times \Omega$. It can be checked that
  for each $\omega\in \Omega$,
  \begin{eqnarray}\label{eqn:intwidehatS}
  \int_{\widehat S} a(\cdot, \omega)d\mu - \int_{\widehat S}
  y_\lambda(\cdot, \omega)d\mu= \frac{c_{m_0}\mu(S^\prime)}{2}e.
  \end{eqnarray}
  Consider $h_\lambda:\widehat S\times \Omega\to Y_+$ defined by
  $h_\lambda(t, \omega)= \lambda y_\lambda(t, \omega)+(1- \lambda)
  f(t, \omega)$. By (A$_4^\prime$), $V_{t}(h_\lambda(t, \cdot))> V_{t}
  (f(t, \cdot))$ for almost all $t\in \widehat{S}$, and further
  $h_\lambda$ is an $\widehat S$-allocation. Let $\widetilde{S}=
  \widehat S\cup B$. Since $\mu(\widetilde{S})= \mu(S)+ (1-
  \lambda)\mu(T\setminus S)$, it remains to verify that
  $f$ is $NY$-privately blocked by $\widetilde{S}$. To show
  this, consider $g_\lambda: \widetilde{S}\times \Omega\to Y_+$
  defined by
  \[
  g_\lambda(t, \omega) = \left\{
  \begin{array}{ll}
  h_\lambda(t, \omega), & \mbox{if $(t, \omega)\in
  \widehat S\times \Omega$;}\\[0.5em]
  f_\lambda(t, \omega), & \mbox{if $(t, \omega)\in B\times \Omega$.}
  \end{array}
  \right.
  \]
  Obviously, $g_\lambda$ is an $\widetilde{S}$-allocation and $V_{t}
  (g_\lambda(t, \cdot))> V_{t}(f(t, \cdot))$ for almost all
  $t\in \widetilde{S}$. Furthermore, using (\ref{eqn:vQ})-
  (\ref{eqn:intwidehatS}), it can be simply verified that
  \[
  \int_{\widetilde{S}}(a(\cdot, \omega)-g_\lambda (\cdot, \omega))
  d\mu= (1- \lambda)\int_{T}(a(\cdot, \omega)-f(\cdot, \omega))
  d\mu= 0
  \]
  holds for all $\omega\in \Omega$. This completes the proof.
  \end{proof}

  \begin{remark}
  If $Y$ is separable, then without (A$_4^\prime$) the conclusions
  of Theorem \ref{thm:finedomonated}, Remark \ref{rem:cominfeco}
  and Theorem \ref{thm:privatedomonated} hold. Indeed, to restore
  the conclusions in Theorem \ref{thm:finedomonated} and Remark
  \ref{rem:cominfeco}, note that $\int_S g d\mu, \int_S f d\mu$
  are in the convex set ${\rm cl}\int_S P_f d\mu$. So, $\int_S
  (\alpha g+ (1-\alpha)f)d\mu\in {\rm cl}\int_S P_f d\mu$ and
  by (A$_3^\prime$), $\int g_\alpha d\mu \in \int_S P_f d\mu$.
  Similarly, to restore the conclusion of Theorem
  \ref{thm:privatedomonated}, note that $\int_S g_{m_0}^\mathcal Q
  d\mu$ and $\int_S f d\mu$ are elements of the convex set
  ${\rm cl}\int_S P_f d\mu$. Thus, $\int_S (\lambda
  g_{m_0}^\mathcal Q+ (1-\lambda)f) d\mu\in {\rm cl}\int_S
  P_f d\mu$ and by (A$_3$), $\int h_\lambda d\mu\in \int_S P_f d\mu$.
  \end{remark}

  \section{Robust efficiency and different types of\\
  cores of mixed market economies}\label{sec:mixed}

  In this section, we study cores and Walrasian expectations
  allocations in mixed economies. We characterize Walrasian
  expectations allocations in terms of robust efficiency,
  and establish relationships among various types of cores.
  To achieve these goals, we associate the mixed economy
  $\mathcal{E}$ in Section \ref{sec:model} with an atomless
  economy $\mathcal{E}^\ast$, and then apply results
  established in Section \ref{sec:atomless}.
  The space of agents of $\mathcal{E}^\ast$ is denoted
  by $(T^\ast, \Sigma^\ast, \mu^\ast)$, where $T^\ast = T_0\cup
  T_1^\ast$ and $T_1^\ast$ is an atomless measure space such that
  $\mu^\ast(T_1^\ast)= \mu(T_1)$ and $T_0\cap T_1^\ast= \emptyset$.
  We assume that $(T^\ast, \Sigma^\ast, \mu^\ast)$ is obtained by
  the direct sum of $(T_0, \Sigma_{T_0}, \mu_{T_0})$ and the
  measure space $T_1^\ast$, where $\mu_{T_0}$ is the restriction
  of $\mu$ to $T_0$. It is also assumed that each agent $A\in
  T_1$ one-to-one corresponds to a measurable subset $A^\ast$
  of $T_1^\ast$ with $\mu^\ast(A^\ast)= \mu(A)$. Each agent
  $t\in A^\ast$ is characterized by the private information
  set $\mathcal{F}_t = \mathcal{F}_{A}$; the consumption set
  $Y_+$ in each state $\omega \in \Omega$; the initial endowment
  $a(t,\cdot) =a(A, \cdot)$; the utility function $U_t = U_{A}$; and
  the prior $q_t = q_{A}$. Therefore, the ex ante expected utility
  function of every agent $t\in A^\ast$ is $V_t=V_{A}$.

  \subsection{Robust efficiency} \label{sec:eqthm}
  In this subsection, we characterize a Walrasian expectations
  equilibrium of a mixed economy by the private blocking power of
  the grand coalition. For any coalition $S$, allocation $f$ in
  $\mathcal E$ and any $0\leq r\leq 1$, we introduce an
  asymmetric information economy $\mathcal E(S, f, r)$ which
  coincides with $\mathcal E$ except for the initial endowment
  allocation that is given by
  \[
  a(S, f, r)(t, \cdot) = \left\{
  \begin{array}{ll}
  a(t, \cdot), & \mbox{if $t\in T\setminus S$;}\\[0.5em]
  (1- r)a(t, \cdot)+ r f(t, \cdot), & \mbox{if $t\in S$.}
  \end{array}
  \right.
  \]
  A feasible allocation $f$ in $\mathcal E$ is said to be
  \emph{robustly efficient} \cite{Herves-Beloso-Moreno-Garcia:08}
  if $f$ is not privately blocked by the grand coalition in
  every economy $\mathcal E(S, f, r)$.

  \begin{lemma}\label{lem:privateblocked}
  Assume that an allocation $f^\ast$ in $\mathcal E^\ast$ is
  privately blocked by a coalition $S^\ast$ with
  $\mu^\ast(S^\ast\cap T_1^\ast) >0$. Under \emph{(A$_1$)-(A$_2$)}
  and \emph{(A$_5$)}, for any $0 <\epsilon \le
  \mu^\ast(S^\ast\cap T_1^\ast)$, there exist a coalition $R^\ast
  \subseteq \bigcup_{\mathcal Q\in \mathfrak P(T^\ast)}(S^\ast\cap
  T_\mathcal Q^\ast)$, a sub-coalition $R_1^\ast$ of $R^\ast$ and
  an $R^\ast$-allocation $g^\ast$ such that
  \begin{enumerate}
  \item[(i)] $\int_{R^\ast} (a(\cdot, \omega)- g^\ast(\cdot,
         \omega))d\mu^\ast\gg 0$ for all $\omega\in \Omega$
         and $V_t(g^\ast(t, \cdot))> V_t(f^\ast(t, \cdot))$ for
         almost all $t\in R^\ast$,

  \item[(ii)] $g^\ast(t, \omega)\gg 0$ for all $(t, \omega)\in
  R_1^\ast\times \Omega$ and $\mathfrak P(R_1^\ast)=
      \mathfrak P(R^\ast)$,

  \item[(iii)] $\mu^\ast(R^\ast\cap T_1^\ast)= \epsilon$ and
  $\mu^\ast(R^\ast\cap T_\mathcal Q^\ast)= \frac{\epsilon
  \mu^\ast(S^\ast\cap T_\mathcal Q^\ast)}{\mu^\ast(S^\ast\cap
  T_1^\ast)}$ for all $\mathcal Q\in \mathfrak P(S^\ast)$.
  \end{enumerate}
  \end{lemma}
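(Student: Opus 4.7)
Set $\lambda=\epsilon/\mu^\ast(S^\ast\cap T_1^\ast)\in(0,1]$. The strategy is to invoke Lemma \ref{lem: block 1} in the atomless economy $\mathcal E^\ast$ on the pair $(f^\ast,S^\ast)$ to obtain an $S^\ast$-allocation $g$ that is strictly positive on a dense sub-coalition and leaves strictly positive integral slack, and then to use Lemma \ref{lem:lyapunov} on each information-type piece to carve out subsets of exactly the prescribed measures while preserving that slack.

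First, with a parameter $\alpha\in(\lambda,1)$ chosen close to $1$, Lemma \ref{lem: block 1} produces $g$ and $S'^\ast\subseteq S^\ast$ such that $g(t,\omega)\gg 0$ on $S'^\ast\times\Omega$, $V_t(g(t,\cdot))>V_t(f^\ast(t,\cdot))$ for a.e.\ $t\in S^\ast$, $\int_{S^\ast}(a-g)\,d\mu^\ast\gg 0$, and $\mu^\ast(S'^\ast\cap T_\mathcal Q^\ast)>\alpha\mu^\ast(S^\ast\cap T_\mathcal Q^\ast)$ for every $\mathcal Q\in\mathfrak P(S^\ast)$. Because $S'^\ast$ in that proof is a member of an increasing sequence $\{S_m\}$ with $\bigcup_m S_m\sim S^\ast$, enlarging $m$ also gives $\mu^\ast(S'^\ast\cap T_\mathcal Q^\ast\cap T_i)>\lambda\mu^\ast(S^\ast\cap T_\mathcal Q^\ast\cap T_i)$ for each $\mathcal Q\in\mathfrak P(S^\ast)$ and $i\in\{0,1\}$; by (A$_6$) only one value of $\mathcal Q$ contributes to the pieces meeting $T_1^\ast$.

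Next, I apply Lemma \ref{lem:lyapunov} to $\omega\mapsto(a-g)(\cdot,\omega)$ on each piece $S'^\ast\cap T_\mathcal Q^\ast\cap T_i$, and use atomlessness of $\mu^\ast$ to adjust measures exactly (the set-enlargement trick used in Theorem \ref{thm:finedomonated}), producing $B^{(\mathcal Q,i)}\subseteq S'^\ast\cap T_\mathcal Q^\ast\cap T_i$ with $\mu^\ast(B^{(\mathcal Q,i)})=\lambda\mu^\ast(S^\ast\cap T_\mathcal Q^\ast\cap T_i)$ and $\int_{B^{(\mathcal Q,i)}}(a-g)\,d\mu^\ast$ arbitrarily close to $r_{\mathcal Q,i}\int_{S'^\ast\cap T_\mathcal Q^\ast\cap T_i}(a-g)\,d\mu^\ast$, where $r_{\mathcal Q,i}=\lambda\mu^\ast(S^\ast\cap T_\mathcal Q^\ast\cap T_i)/\mu^\ast(S'^\ast\cap T_\mathcal Q^\ast\cap T_i)\to\lambda$ as $\alpha\to 1$. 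Setting $R^\ast=\bigcup_{\mathcal Q,i}B^{(\mathcal Q,i)}$, $R_1^\ast=R^\ast$, and $g^\ast=g|_{R^\ast}$, condition (iii) follows by summing the exact measures; (ii) follows because $R^\ast\subseteq S'^\ast$ where $g\gg 0$ and $\mathfrak P(R_1^\ast)=\mathfrak P(R^\ast)=\mathfrak P(S^\ast)$; and the utility clause of (i) is inherited from $S^\ast\supseteq R^\ast$.

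The principal obstacle is the feasibility clause of (i), $\int_{R^\ast}(a-g^\ast)\,d\mu^\ast\gg 0$. Summing the $B^{(\mathcal Q,i)}$-integrals yields an expression arbitrarily close to $\sum_{\mathcal Q,i}r_{\mathcal Q,i}\int_{S'^\ast\cap T_\mathcal Q^\ast\cap T_i}(a-g)\,d\mu^\ast$, which tends to $\lambda\int_{S^\ast}(a-g)\,d\mu^\ast$ as $\alpha\uparrow 1$. The latter sits in ${\rm int}\,Y_+$, so by openness of ${\rm int}\,Y_+$ the approximation also belongs to ${\rm int}\,Y_+$ once $\alpha$ is close enough to $1$ and the Lyapunov selection tight enough. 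The degenerate case $\lambda=1$ is handled directly by taking $R^\ast=S^\ast$, $g^\ast=g$, and $R_1^\ast=S'^\ast$ supplied by Lemma \ref{lem: block 1}.
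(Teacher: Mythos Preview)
Your overall architecture --- Lemma~\ref{lem: block 1} followed by a Lyapunov selection on information-type pieces --- matches the paper's. The divergence is that you carve your Lyapunov pieces out of $S'^\ast$ (so that $R_1^\ast=R^\ast$ is automatic), whereas the paper carves them out of the full $S^\ast\cap T_\mathcal Q^\ast$ with a \emph{single} proportion $\delta=\epsilon/\mu^\ast(S^\ast\cap T_1^\ast)$, tracks $\mu^\ast(E)$ and $\mu^\ast(E\cap T_1^\ast)$ together in a three-component Lyapunov set, and only afterwards sets $R_1^\ast=R^\ast\cap S_1^\ast$; the choice $\alpha=1-\delta/2$ then forces $\mu^\ast(R^\ast\cap S_1^\ast\cap T_\mathcal Q^\ast)>0$ for every $\mathcal Q$ by a pigeonhole count. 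With a single proportion the feasibility limit is exactly $\delta\int_{S^\ast}(a-g^\ast)\,d\mu^\ast\gg 0$, so one just picks $n_0$ large.

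Your route introduces non-uniform ratios $r_{\mathcal Q,i}$, and the feasibility argument you give does not close. You let $\alpha\uparrow 1$ and assert that the weighted sum tends to $\lambda\int_{S^\ast}(a-g)\,d\mu^\ast\in{\rm int}\,Y_+$, but $g=g_m$ itself depends on $\alpha$ through the index $m$ supplied by Lemma~\ref{lem: block 1}. As $m\to\infty$ one has $g_m\to h$ in $L_1$ (here $h$ is the original blocking allocation), so the quantity you call ``the latter'' drifts toward $\lambda\int_{S^\ast}(a-h)\,d\mu^\ast$, which is only $\ge 0$ and may sit on $\partial Y_+$. Meanwhile the approximation error $\sum(r_{\mathcal Q,i}-\lambda)\int_{S'^\ast\cap\cdots}(a-g)$ is governed by $\mu^\ast(S^\ast\setminus S_m)$, and nothing in Lemma~\ref{lem: block 1} ties this to the depth $c_m$ of the target in ${\rm int}\,Y_+$; the two can shrink at incomparable rates. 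In short, you are racing a moving target against an error whose relative size is uncontrolled. The fix is precisely what the paper does: apply Lyapunov on the unrestricted pieces with the common ratio $\delta$, and recover $\mathfrak P(R_1^\ast)=\mathfrak P(R^\ast)$ afterwards via the fixed choice $\alpha=1-\delta/2$.
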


  \begin{proof}
  If $\epsilon = \mu^\ast(S^\ast\cap T_1^\ast)$, the conclusion
  directly follows from Lemma \ref{lem: block 1}. Assume
  $0<\epsilon < \mu^\ast(S^\ast\cap T_1^\ast)$. Let
  $\delta = \frac{\epsilon}{\mu^\ast(S^\ast\cap T_1^\ast)}$ and
  $\alpha= 1- \frac{\delta}{2}$. Applying Lemma \ref{lem: block 1},
  one has a sub-coalition $S_1^\ast$ of $S^\ast$ and an
  $S^\ast$-allocation $g^\ast$ satisfying (i)-(iii) of Lemma
  \ref{lem: block 1}. For each $\mathcal Q\in \mathfrak P(S^\ast)$,
  by Lemma \ref{lem:lyapunov}, the set
  \[
  H_{\mathcal Q}= {\rm cl}\left\{\left(\mu^\ast(E^\mathcal Q),
  \mu^\ast(E^\mathcal Q\cap T_1^\ast), \int_{E^\mathcal Q}(a-
  g^\ast)d\mu^\ast\right) \in {\mathbb R}^2 \times Y^\Omega:
  E^\mathcal Q\in \Sigma^\ast_{S^\ast\cap T_{\mathcal Q}^\ast}
  \right\}
  \]
  is convex. Similar to the proof of Theorem
  \ref{thm:finedomonated}, for each $\mathcal Q\in \mathfrak
  P(S^\ast)$, there exists a sequence $\{E_n^\mathcal Q\}\subseteq
  \Sigma^\ast_{S^\ast\cap T_{\mathcal Q}^\ast}$ such that
  $\mu^\ast(E_n^\mathcal Q)= \delta \mu^\ast(S^\ast\cap T_{\mathcal
  Q}^\ast)$, $\mu^\ast(E_n^\mathcal Q\cap T_1^\ast)= \delta
  \mu^\ast(S^\ast\cap T_{\mathcal Q}^\ast\cap T_1^\ast)$ and
  \[
  \lim_{n\to \infty} \int_{E_n^\mathcal Q}(a- g^\ast)d\mu^\ast=
  \delta \int_{S^\ast\cap T_{\mathcal Q}^\ast}(a- g^\ast)d
  \mu^\ast.
  \]
  Since $\mu^\ast(S_1^\ast\cap T_\mathcal Q^\ast)> \alpha
  \mu^\ast(S^\ast\cap T_\mathcal Q^\ast)$ for all $\mathcal Q
  \in \mathfrak P(S^\ast)$, then $\mu^\ast(S_1^\ast\cap
  E_n^\mathcal Q)> 0$ for all $n\ge 1$ and all $\mathcal Q\in
  \mathfrak P(S^\ast)$. Let $E_n= \bigcup_{\mathcal Q\in
  \mathfrak P(S^\ast)} E_n^\mathcal Q$ for all $n\geq 1$.
  Then
  \[
  \lim_{n\to \infty} \int_{E_n}(a- g^\ast)d\mu^\ast=
  \delta \int_{S^\ast}(a- g^\ast)d\mu^\ast.
  \]
  Pick an $n_0$ such that $ \int_{E_{n_0}} (a- g^\ast) d
  \mu^\ast\gg 0$, and put $R^\ast= E_{n_0}$, $R_1^\ast=
  R^\ast\cap S_1^\ast$.
  \end{proof}

  \begin{lemma}\cite{Evern-Husseinov:08} \label{lem:corwal}
  Assume $Y$ is separable. Under \emph{(A$_1$)-(A$_3$)} and
  \emph{(A$_5$)}, $f^\ast$ is a Walrasian expectations allocation
  of $\mathcal E^\ast$ if and only if it is in the private
  core of $\mathcal E^\ast$.
  \end{lemma}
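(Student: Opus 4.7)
The argument splits into the standard easy direction and the harder converse. For the easy direction (Walrasian $\Rightarrow$ private core), assume $(f^\ast,\pi)$ is a Walrasian expectations equilibrium of $\mathcal E^\ast$ yet some coalition $S^\ast$ privately blocks $f^\ast$ via an $S^\ast$-feasible allocation $g$. For almost every $t\in S^\ast$, the strict preference $V_t(g(t,\cdot))>V_t(f^\ast(t,\cdot))$ combined with budget-set maximality of $f^\ast(t,\cdot)$ forces $g(t,\cdot)\notin B_t(\pi)$, so $\sum_\omega\langle\pi(\omega),g(t,\omega)\rangle>\sum_\omega\langle\pi(\omega),a(t,\omega)\rangle$. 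Integrating over $S^\ast$ and using $\pi\geq 0$ together with $S^\ast$-feasibility of $g$ yields a contradiction.

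For the converse, let $f^\ast$ be in the private core of $\mathcal E^\ast$. Following the Aumann strategy, define for almost all $t\in T^\ast$ the strictly-preferred set $P_f(t)=\{y\in L_t:V_t(y)>V_t(f^\ast(t,\cdot))\}$. By separability of $Y$, restrict attention (up to a null set) to a closed separable subspace $Z$ of $Y^\Omega$ containing the ranges of $f^\ast$ and $a$; a projection-theorem argument mirroring the one in the proof of Lemma \ref{lem: block 1} shows that the correspondence $t\mapsto Z\cap(P_f(t)-\{a(t,\cdot)\})$ has measurable graph. Let $\Gamma$ denote the closure in $Z$ of the integral of this correspondence over $T^\ast$; by Lemma \ref{lem:lyapunov} together with atomlessness of $\mathcal E^\ast$, $\Gamma$ is convex. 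The private core property implies $0\notin\Gamma$: any approximating selection would yield an assignment $g$ with $V_t(g(t,\cdot))>V_t(f^\ast(t,\cdot))$ almost everywhere and $\int_{T^\ast}(g-a)\,d\mu^\ast$ arbitrarily close to zero, and applying Lemma \ref{lem: block 1} together with the interior-point assumption on $Y_+$ lets one redistribute any remaining slack and produce a privately blocking grand coalition, contradicting the private core property.

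A Hahn-Banach separation of $\{0\}$ from $\Gamma$ now delivers a nonzero continuous linear functional $\varphi$ on $Y^\Omega$ with $\varphi\geq 0$ on $\Gamma$, and the interior point of $Y_+$ guarantees that the separation is proper and that $\varphi$ admits the representation $\varphi(x)=\sum_{\omega\in\Omega}\langle\pi(\omega),x(\omega)\rangle$ for some state-indexed $\pi(\omega)\in Y^\ast$. Monotonicity (A$_3$) forces $\pi(\omega)\in Y_+^\ast$: perturbing any admissible $y$ by an interior positive element lands in $P_f(t)-\{a(t,\cdot)\}$ and so may not lower $\varphi$. Strict positivity of endowments (A$_5$) together with nontriviality of $\varphi$ rules out $\pi\equiv 0$. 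The equilibrium properties follow by standard arguments: aggregate budget balance from feasibility of $f^\ast$, individual budget balance and utility maximization from the separation inequality combined with the definition of $P_f(t)$, noting that any budget-feasible $y$ strictly preferred to $f^\ast(t,\cdot)$ would land $y-a(t,\cdot)$ in $P_f(t)-\{a(t,\cdot)\}$ with $\varphi(y-a(t,\cdot))\leq 0$, which, after a redistribution argument on a small coalition, contradicts $\varphi\geq 0$ on $\Gamma$.

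The main obstacle is the convexification-plus-separation step in the infinite-dimensional commodity space. Convexity of $\Gamma$ rests on the Banach-valued Lyapunov theorem (Lemma \ref{lem:lyapunov}) and therefore on atomlessness of $\mathcal E^\ast$, while extracting a \emph{continuous} separating functional in $Y^\Omega$ and controlling its sign requires the interior-point assumption on $Y_+$; separability of $Y$ is used throughout to justify the measurable-selection arguments underlying both steps, precisely as in the proof of Lemma \ref{lem: block 1}.
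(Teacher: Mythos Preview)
The paper supplies no proof of this lemma: it is stated with a citation to Evren and H\"usseinov \cite{Evern-Husseinov:08} and used as a black box. Your sketch reproduces, in outline, exactly the argument of that reference---the easy direction by budget violation, and the converse by forming the net-trade correspondence $t\mapsto P_f(t)-\{a(t,\cdot)\}$, convexifying its integral via the Banach-valued Lyapunov theorem, separating $0$ from the resulting convex set, and decentralizing the separating functional into a price system.

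Two points in your sketch are loose enough to be worth flagging. First, convexity of $\Gamma$ does not follow from Lemma~\ref{lem:lyapunov} as stated, since that lemma concerns the range of a single integrand; the missing step is that for two selections $g_1,g_2$ of $P_f$ and any $\lambda\in(0,1)$, applying Lemma~\ref{lem:lyapunov} to $g_1-g_2$ yields a set $E$ with $\int_E(g_1-g_2)\,d\mu^\ast$ close to $\lambda\int_{T^\ast}(g_1-g_2)\,d\mu^\ast$, and the splice $g_1\chi_E+g_2\chi_{T^\ast\setminus E}$ is again a selection of $P_f$ whose integral approximates the convex combination. Second, the decentralization step---passing from $\varphi\ge 0$ on $\Gamma$ to the individual conditions $\varphi(y-a(t,\cdot))\ge 0$ for all $y\in P_f(t)$ and almost all $t$---requires an explicit measurable-selection argument on the exceptional set, and the subsequent upgrade from ``$\ge$'' to strict budget violation for strictly preferred bundles uses (A$_5$) in an essential way (a cheaper-point argument). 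Both are standard in the cited source, but your phrase ``after a redistribution argument on a small coalition'' does not quite capture what is needed.
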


  \begin{lemma} \cite{Evern-Husseinov:08} \label{lem:freeblock}
  Assume that $\mathcal E$ satisfies \emph{(A$_1$)-(A$_3$)} and
  \emph{(A$_5$)}. Let $f^\ast$ be a feasible allocation of
  $\mathcal E^\ast$ and $0< \epsilon< \mu^\ast(T^\ast)$.
  If $f^\ast$ is not in the private core of $\mathcal E^\ast$,
  then there is a coalition $S$ with $\mu^\ast(S)= \epsilon$
  privately blocking $f^\ast$.
  \end{lemma}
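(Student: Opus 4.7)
The plan is to follow the Vind-style strategy of Theorems \ref{thm:finedomonated} and \ref{thm:privatedomonated}, now adapted to the private core with free-disposal feasibility. Since $f^\ast$ is not in the private core of $\mathcal{E}^\ast$, fix a coalition $S_0$ privately blocking $f^\ast$ and apply Lemma \ref{lem: block 1} (available because $\mathcal{E}^\ast$ is atomless) with some $\alpha$ close to $1$. This yields an $S_0$-allocation $g$ and a sub-coalition $S'$ of $S_0$ with $g(t,\omega) \gg 0$ on $S' \times \Omega$, $V_t(g(t,\cdot)) > V_t(f^\ast(t,\cdot))$ for almost all $t \in S_0$, $\mathfrak{P}(S') = \mathfrak{P}(S_0)$, and the surplus $z(\omega) := \int_{S_0}(a(\cdot,\omega)-g(\cdot,\omega))\,d\mu^\ast \gg 0$ for every $\omega \in \Omega$.

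If $0 < \epsilon \le \mu^\ast(S_0)$, the conclusion follows from a type-by-type application of Lemma \ref{lem:lyapunov}: for each $\mathcal{Q} \in \mathfrak{P}(S_0)$, one picks $F_n^\mathcal{Q} \subseteq S_0 \cap T_\mathcal{Q}^\ast$ of measure $(\epsilon/\mu^\ast(S_0))\mu^\ast(S_0 \cap T_\mathcal{Q}^\ast)$ whose integral of $a-g$ converges to the corresponding proportional share of the surplus. For $n$ large, $F_n := \bigcup_\mathcal{Q} F_n^\mathcal{Q}$ has measure $\epsilon$, the restriction $g|_{F_n}$ is $F_n$-feasible (the scaled surplus remains $\gg 0$), and it blocks $f^\ast$.

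The case $\mu^\ast(S_0) < \epsilon < \mu^\ast(T^\ast)$ is the more delicate one. Set $R = T^\ast \setminus S_0$, $\beta = (\epsilon - \mu^\ast(S_0))/\mu^\ast(R) \in (0,1)$ and $r = 1 - \beta$. Invoking Lemma \ref{lem: block 2} on $g$ produces an $S_0$-allocation $h$ with $V_t(h(t,\cdot)) > V_t(f^\ast(t,\cdot))$ for almost all $t \in S_0$ and $\int_{S_0} h(\cdot,\omega)\,d\mu^\ast = \int_{S_0}\bigl(rg(\cdot,\omega) + (1-r)f^\ast(\cdot,\omega)\bigr)\,d\mu^\ast$ for every $\omega$. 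For each $\mathcal{Q} \in \mathfrak{P}(R)$, Lemma \ref{lem:lyapunov} yields $B_n^\mathcal{Q} \subseteq R \cap T_\mathcal{Q}^\ast$ of measure $\beta\mu^\ast(R \cap T_\mathcal{Q}^\ast)$ whose integrals of $a - f^\ast$ converge to $\beta \int_{R \cap T_\mathcal{Q}^\ast}(a - f^\ast)\,d\mu^\ast$. Put $B_n = \bigcup_\mathcal{Q} B_n^\mathcal{Q}$ and pick a constant $u \gg 0$ so small that $rz(\omega) - \beta \mu^\ast(R)\,u \gg 0$ for every $\omega \in \Omega$ (possible since $\Omega$ is finite and $Y_+$ has interior). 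Define $\tilde g$ on $\tilde S_n := S_0 \cup B_n$ by $\tilde g|_{S_0} = h$ and $\tilde g(t,\cdot) = f^\ast(t,\cdot) + u$ for $t \in B_n$; then (A$_3$) yields $V_t(\tilde g(t,\cdot)) > V_t(f^\ast(t,\cdot))$ for almost all $t \in \tilde S_n$, and $\mu^\ast(\tilde S_n) = \epsilon$. A direct computation gives
\[
\int_{\tilde S_n}(a - \tilde g)\,d\mu^\ast = rz + (1-r)\int_{T^\ast}(a - f^\ast)\,d\mu^\ast - \mu^\ast(B_n)\,u + \delta_n,
\]
where $\delta_n \to 0$; feasibility of $f^\ast$ and the choice of $u$ make the right-hand side $\gg 0$ for $n$ large, so $\tilde S_n$ privately blocks $f^\ast$.

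The main obstacle is the feasibility balance in Case 2: to extend the blocking coalition beyond $S_0$, the newly added agents in $B_n$ must be given something strictly preferable to $f^\ast$, yet the ensuing bonus must be absorbed within the slack provided by the original blocking. The crux of the plan is to couple the Lemma \ref{lem: block 2} parameter $r$ with the Lyapunov fraction $\beta$ on $R$ via $r + \beta = 1$; this forces the otherwise unrelated cross terms $(1-r)\int_{S_0}(a - f^\ast)\,d\mu^\ast$ and $\int_{B_n}(a - f^\ast)\,d\mu^\ast \approx \beta \int_R(a - f^\ast)\,d\mu^\ast$ to combine into the globally non-negative $(1-r)\int_{T^\ast}(a - f^\ast)\,d\mu^\ast$, leaving the clean strict surplus $rz$ from which the monotonicity bump $u$ can safely be deducted.
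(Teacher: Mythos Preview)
The paper does not supply its own proof of this lemma; it is quoted verbatim from Evren and H\"usseinov \cite{Evern-Husseinov:08}. Your argument is correct and is precisely the free-disposal analogue of the machinery the paper develops for the exact-feasibility results (Theorems~\ref{thm:finedomonated} and~\ref{thm:privatedomonated}): Lemma~\ref{lem: block 1} produces a strict surplus $z(\omega)\gg 0$, a type-wise Lyapunov slice handles $\epsilon\le\mu^\ast(S_0)$, and for larger $\epsilon$ the coupling $r=1-\beta$ together with Lemma~\ref{lem: block 2} collapses the cross terms into $(1-r)\int_{T^\ast}(a-f^\ast)\,d\mu^\ast\ge 0$, leaving $rz-\beta\mu^\ast(R)\,u\gg 0$ to absorb the monotonicity bump and the Lyapunov error $\delta_n$. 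Since $x\gg 0$ and $y\ge 0$ imply $x+y\gg 0$ in an ordered Banach space, the feasibility inequality indeed survives for large $n$. The only cosmetic omission is that you do not explicitly fold in the null-measure information types $\mathcal Q\in\mathfrak P_{S_0}\setminus\mathfrak P(S_0)$ when assembling $F_n$ and $B_n$, but this is harmless for both the measure and the integrals.
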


  The following lemma is similar to Theorem 3.5 in
  \cite{De Simone-Graziano:03}.

  \begin{lemma}\label{lem:auxrob}
  Assume that $f$ is a robustly efficient allocation of $\mathcal E$.
  Under \emph{(A$_1$)-(A$_7$)}, there is an allocation
  $\hat f$ in $\mathcal E$ such that $\hat f|_{T_0\times \Omega}= f$,
  $\hat f(\cdot, \omega)$ is constant on $T_1$ for each $\omega\in
  \Omega$, $V_{T_1}(\hat f(t, \cdot))= V_{T_1}(f(t, \cdot))$ for all
  $t\in T$ and $\int_T f d\mu= \int_T \hat f d\mu$.
  \end{lemma}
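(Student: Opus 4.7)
The plan is to define $\hat f$ by averaging $f$ across $T_1$ and then verify the utility-preservation condition via partial concavity together with robust efficiency. Set
\[
\bar f(\omega):=\frac{1}{\mu(T_1)}\int_{T_1}f(\cdot,\omega)\,d\mu,
\]
and put $\hat f(t,\omega)=f(t,\omega)$ for $t\in T_0$ and $\hat f(t,\omega)=\bar f(\omega)$ for $t\in T_1$. By (A$_6$) every $t\in T_1$ has $\mathcal F_t=\mathcal F_{T_1}$, so the integrand $f(\cdot,\omega)|_{T_1}$ takes $\mathcal F_{T_1}$-measurable values and the Bochner integral $\bar f$ is $\mathcal F_{T_1}$-measurable; thus $\hat f$ is a genuine allocation in $\mathcal E$. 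The identities $\hat f|_{T_0\times\Omega}=f$, constancy of $\hat f(\cdot,\omega)$ on $T_1$, and
\[
\int_T\hat f\,d\mu=\int_{T_0}f\,d\mu+\mu(T_1)\bar f=\int_T f\,d\mu
\]
follow at once.

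The remaining requirement is $V_{T_1}(\hat f(t,\cdot))=V_{T_1}(f(t,\cdot))$ for every $t\in T$, which is trivial for $t\in T_0$ and for $t\in T_1$ reads $V_{T_1}(\bar f)=V_{T_1}(f(t,\cdot))$ for a.a.\ $t\in T_1$. Applying partial concavity (A$_4$) to the $T$-feasible $f$ with $S=T$ and summing against $q_{T_1}$ yields the one-sided bound
\[
V_{T_1}(\bar f)\ge\frac{1}{\mu(T_1)}\int_{T_1}V_{T_1}(f(\cdot,\cdot))\,d\mu,
\]
so the task reduces to showing both that this inequality is an equality and that $V_{T_1}(f(\cdot,\cdot))$ is a.e.\ constant on $T_1$.

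Both conclusions are extracted from robust efficiency by contradiction. In the benign scenario where the essential supremum of $V_{T_1}(f(\cdot,\cdot))$ on $T_1$ lies strictly below $V_{T_1}(\bar f)$, I pass to the modified economy $\mathcal E(T,f,1)$, in which every endowment becomes $f(t,\cdot)$ and $\int_T a(T,f,1)\,d\mu=\int_T f\,d\mu$, and consider the perturbation
\[
g_\epsilon(t,\omega)=\begin{cases}f(t,\omega)+\epsilon y_0,& t\in T_0,\\ \bar f(\omega)-\epsilon\tfrac{\mu(T_0)}{\mu(T_1)}y_0,& t\in T_1,\end{cases}
\]
with $y_0\gg 0$ a fixed vector. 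Then $\int_T g_\epsilon\,d\mu=\int_T f\,d\mu$ gives $T$-feasibility in $\mathcal E(T,f,1)$, (A$_3$) gives strict preference on $T_0$, and continuity of $V_{T_1}$ gives strict preference on $T_1$ for $\epsilon$ small, producing a grand-coalition block and contradicting robust efficiency. The main obstacle is the remaining case, where $V_{T_1}(f(\cdot,\cdot))$ is genuinely non-constant on $T_1$ and the averaging perturbation above fails for the ``rich'' large agents. Here (A$_7$) enters decisively: picking two atoms $A,B\in T_1$ with $V_{T_1}(f(A,\cdot))<V_{T_1}(f(B,\cdot))$, I pass to $\mathcal E(A\cup B,f,1)$, and observe that replacing $f|_{A\cup B}$ by its $(A\cup B)$-average preserves $\int_{A\cup B}f\,d\mu$ and, by partial concavity applied to this feasible redistribution, strictly improves the worse-off atom. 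Coupling this intra-$T_1$ redistribution with a carefully calibrated monotonic boost on $T\setminus(A\cup B)$---exploiting (A$_3$) and the strict gain delivered by partial concavity---yields a $T$-feasible grand-coalition block of $f$ in $\mathcal E(A\cup B,f,1)$, again contradicting robust efficiency and forcing the required equal-treatment conclusion.
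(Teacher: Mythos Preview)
Your definition of $\hat f$ and the reduction to showing $V_{T_1}(\bar f)=V_{T_1}(f(t,\cdot))$ for a.a.\ $t\in T_1$ are the same as in the paper, but from that point on your argument diverges from the paper's and, in its present form, does not go through.

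The serious gap is in your Case~2. Averaging $f$ over the two atoms $A,B$ with $V_{T_1}(f(A,\cdot))<V_{T_1}(f(B,\cdot))$ strictly improves the worse-off atom $A$, as you say, but partial concavity only yields $V_{T_1}(\tilde f)\ge \frac{\mu(A)V_{T_1}(f(A,\cdot))+\mu(B)V_{T_1}(f(B,\cdot))}{\mu(A)+\mu(B)}$, and this right-hand side is \emph{strictly smaller} than $V_{T_1}(f(B,\cdot))$. So the better-off atom $B$ is, in general, made strictly worse by your redistribution (take $U_{T_1}(\omega,\cdot)$ linear to see this). A grand-coalition private block requires $V_t(g(t,\cdot))>V_t(f(t,\cdot))$ for almost all $t\in T$, hence in particular for $t=B$; your ``monotonic boost on $T\setminus(A\cup B)$'' does nothing for $B$, and you have identified no resource slack with which to compensate $B$. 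In $\mathcal E(A\cup B,f,1)$ the aggregate endowment is $\int_{T\setminus(A\cup B)}a\,d\mu+\mu(A)f(A,\cdot)+\mu(B)f(B,\cdot)$, so once you spend $\mu(A)f(A,\cdot)+\mu(B)f(B,\cdot)$ on $A\cup B$ via $\tilde f$ and $\int_{T\setminus(A\cup B)}f\,d\mu$ on the rest, there is no guaranteed surplus: $\int_{T\setminus(A\cup B)}(a-f)\,d\mu$ need not lie in $Y_+$. Thus both the strict improvement of $B$ and the strict improvement of $T\setminus(A\cup B)$ are unaccounted for.

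There is a secondary problem in your Case~1: the bundle $\bar f(\omega)-\epsilon\frac{\mu(T_0)}{\mu(T_1)}y_0$ need not lie in $Y_+$, since nothing ensures $\bar f(\omega)\gg 0$. Continuity of $V_{T_1}$ is only available on $(Y_+)^\Omega$, so this step requires more care.

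For comparison, the paper's proof does not split into your two cases. It starts directly from the existence of an atom $C\subseteq T_1$ with $V_{T_1}(\hat f(t,\cdot))>V_{T_1}(f(t,\cdot))$, then scales $\hat f$ on $C$ by some $r_1<1$ to create a genuine resource saving, uses Lyapunov's theorem on the atomless part $T_0$ to approximate a fraction of $\int_{T_0}(f-a)\,d\mu$, and carefully assembles a coalition $S=E_0\cup C$ with $\int_S g\,d\mu\ll\int_S a\,d\mu$. This strict surplus is then leveraged, via Lemma~\ref{lem: block 2} and monotonicity, to build a grand-coalition block of $f$ in $\mathcal E(T\setminus S,f,r)$ for a suitable $r\in(0,1)$. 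The key device you are missing is precisely this creation of a strictly positive surplus before attempting to satisfy all agents simultaneously.
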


  \begin{proof}
  Consider the allocation $\hat f:T\times \Omega\to Y_+$ defined by
  \[
  \hat f(t, \omega)= \left\{
  \begin{array}{ll}
  f(t, \omega), & \mbox{if $(t, \omega)\in T_0\times \Omega$;}\\[0.5em]
  \frac{1}{\mu(T_1)}\int_{T_1} f(\cdot, \omega)d \mu, & \mbox{if $(t,
  \omega)\in T_1\times \Omega$.}
  \end{array}
  \right.
  \]
  To complete the proof, one only needs to verify $V_{T_1}(\hat f(t,
  \cdot))= V_{T_1}(f(t, \cdot))$ holds for all $t\in T_1$. Suppose
  that there exists a coalition $D\subseteq T_1$ such that
  $V_{T_1}(\hat f(t, \cdot))> V_{T_1}(f(t, \cdot))$ for all $t\in D$.
  Then applying an argument similar to that in Lemma \ref{lem: block 1},
  one can find some $r_1\in (0, 1)$ and a sub-coalition $C\subseteq D$
  such that $V_{T_1}(r_1 \hat f(t, \cdot))> V_{T_1}(f(t,\cdot))$
  for all $t\in C$. Let $r_2= \frac{\mu(C)}{\mu(T_1)}$ and
  $r_3= r_1+ \eta$ for some $\eta> 0$ such that $r_3\in (0, 1)$.
  Then $r_2\in (0, 1]$. Suppose that for each $\omega\in \Omega$,
  \[
  \alpha(\omega)= r_2 r_3\left(\int_T f(\cdot, \omega)d \mu- \int_T
  a(\cdot,\omega)d \mu\right)-r_2(1- r_3) \int_{T_1} a(\cdot,
  \omega)d \mu.
  \]
  Note that $\alpha(\omega)\in - {\rm int} Y_+$ for each
  $\omega\in \Omega$. Choose an $\epsilon> 0$ such that
  for each $\omega\in \Omega$, $\alpha(\omega)+
  B(0, 2\epsilon)\subseteq -{\rm int} Y_+$. By Lemma
  \ref{lem:lyapunov}, $H= {\rm cl}\left\{\int_E
  (f- a)\in Y^\Omega: E\in \Sigma_{T_0}\right\}$ is convex. So
  there is an $E_0\in \Sigma_{T_0}$ such that $\|\int_{E_0}
  (f- a)- r_2 r_3 \int_{T_0} (f-a)\|< \epsilon$. Pick an $u\in
  B(0, \epsilon)\cap {\rm int}Y_+$ and put $S= E_0\cup C$. Then,
  $\mu(S)< \mu(T)$. Note that the function $g: S\times \Omega
  \to Y_+$, defined by
  \[
  g(t, \omega)= \left\{
  \begin{array}{ll}
  \hat f(t, \omega)+ \frac{u}{2\mu(E_0)}, & \mbox{if $(t, \omega)
  \in E_0\times \Omega$;}\\[0.5em]
  r_3 \hat f(t, \omega)+ \frac{u}{2\mu(C)}, & \mbox{if $(t,
  \omega)\in C\times \Omega$,}
  \end{array}
  \right.
  \]
  is an $S$-allocation and $V_t(g(t, \cdot))> V_t(f(t, \cdot))$
  for almost all $t\in S$. Further, $g(t, \omega)\gg 0$ for
  all $(t, \omega)\in S\times \Omega$ and
  $\int_S g(\cdot, \omega)d\mu= \int_{E_0} f(\cdot, \omega)d\mu+
  r_2 r_3\int_{T_1} f(\cdot, \omega)d\mu+ u$
  for all $\omega\in\Omega$. By (A$_6$), $\int_C a(\cdot,\omega)
  d \mu=r_2 \int_{T_1} a(\cdot, \omega)d \mu$ for all $\omega\in
  \Omega$. Then it can be easily verified that for all $\omega
  \in \Omega$,
  \[
  -\alpha(\omega)+ \int_S (g(\cdot, \omega)- a(\cdot, \omega))d
  \mu=\int_{E_0} (f- a)- r_2 r_3 \int_{T_0} (f- a)+ u\in
  B(0, 2\epsilon).
  \]
  It follows that $\int_S g(\cdot, \omega)d\mu- \int_S a(\cdot,
  \omega) d\mu\ll 0$ for all $\omega\in \Omega$. Select an
  $z\gg 0$ such that $\int_S a(\cdot,\omega)d\mu- \int_S g(\cdot,
  \omega)d\mu\gg z$ for each $\omega\in \Omega$ and pick an $r
  \in (0, 1)$ such that $r_1 \hat f(t, \omega)\le r g(t,\omega)$
  for all $(t, \omega)\in C\times \Omega$. Note that the
  function $h_1:C\times \Omega\to Y_+$, defined by $h_1(t,\omega)
  = r_1 \hat f(t, \omega)$, is a $C$-allocation and $V_{T_1}
  (h_1(t, \cdot))> V_{T_1}(f(t, \cdot))$ for all $t\in C$.
  By Lemma \ref{lem: block 2}, there is an $E_0$-allocation
  $h_2: E_0\times \Omega\to Y_+$ such that $V_t(h_2(t,\cdot))>
  V_t(f(t, \cdot))$ for almost all $t\in E_0$, and
  \[
  \int_{E_0}
  h_2(\cdot, \omega)d \mu= \int_{E_0}\left(r  g(\cdot,\omega)+
  (1- r)f(\cdot, \omega)\right)d \mu
  \]
  for all $\omega\in \Omega$. Now, $h:S\times\Omega\to Y_+$,
  defined by
  \[
  h(t, \omega)= \left\{
  \begin{array}{ll}
  h_2(t, \omega), & \mbox{if $(t, \omega)\in E_0\times
  \Omega$;}\\[0.5em]
  h_1(t, \omega), & \mbox{if $(t, \omega)\in C\times \Omega$,}
  \end{array}
  \right.
  \]
  is an $S$-allocation, $V_t(h(t, \cdot))> V_t(f(t, \cdot))$
  for almost all $t\in S$, and
  \begin{eqnarray} \label{eqn:integralh}
  \int_{S}h(\cdot, \omega)d \mu\le \int_{S}(r g(\cdot, \omega)+
  (1- r)f(\cdot,\omega))d \mu \mbox{ for all } \omega\in
  \Omega.
  \end{eqnarray}
  Define a function $y:T\times \Omega\to Y_+$ such that
  \[
  y(t, \omega)= \left\{
  \begin{array}{ll}
  h(t, \omega), & \mbox{if $(t, \omega)\in S\times \Omega$;}
  \\[0.5em]
  f(t, \omega)+ \frac{rz}{\mu(T\setminus S)}, & \mbox{if
  $(t,\omega) \in (T\setminus S)\times \Omega$.}
  \end{array}
  \right.
  \]
  By (A$_3$), $V_t(y(t, \cdot))> V_t(f(t, \cdot))$ for almost all $t
  \in T\setminus S$. Thus, $y$ is an allocation and $V_t(y(t, \cdot))
  > V_t(f(t, \cdot))$ for almost all $t\in T$. Furthermore, using
  (\ref{eqn:integralh}) and $\int_{S}(a(\cdot, \omega)- g(\cdot,
  \omega))d \mu\gg z$, one can simply verify that for each
  $\omega \in \Omega$,
  \[
  \int_T (y(\cdot,\omega)- a(T\setminus S, f, r)(\cdot, \omega))d
  \mu\le (1- r)\int_{T} (f(\cdot, \omega)- a(\cdot, \omega))d \mu
  \leq 0.
  \]
  This means that $f$ is privately blocked by the grand coalition
  in $\mathcal{E}(T\setminus S, f, r)$, which contradicts with
  the fact that $f$ is robustly efficient. So $V_{T_1}(f(t,\cdot))
  \ge V_{T_1}(\hat f(t, \cdot))$ for all $t\in T_1$. Suppose that
  there is a coalition $W\subseteq T_1$ such that $V_{T_1}(f(t,
  \cdot))> V_{T_1}(\hat f(t,\cdot))$ for all $t\in W$. By (A$_4$),
  one can easily derive
  \[
  V_{T_1}
  (\hat f(t, \cdot))> \frac{1}{\mu(T_1)}\int_{T_1} V_{T_1}(\hat f(t,
  \cdot))d\mu= V_{T_1}(\hat f(t, \cdot)),
  \]
  which is a contradiction.
  Thus, $V_{T_1}(f(t, \cdot))= V_{T_1}(\hat f(t, \cdot))$ for all
  $t\in T_1$.
  \end{proof}

  Next, in answering a question mentioned Herv\'{e}s-Beloso and
  Moreno-Garc\'{i}a in \cite[p.705]{Herves-Beloso-Moreno-Garcia:08},
  we provide a characterization of Walrasian expectations equilibria
  by the veto power of the grand coalition in a mixed economy with
  asymmetric information and an ordered separable Banach space whose
  positive cone has an interior point as the commodity space.

  \begin{theorem}\label{thm:walrob}
  Assume that $Y$ is separable. Under \emph{(A$_1$)-(A$_7$)}, $f$
  is a Walrasian expectations allocation of $\mathcal E$ if and
  only if it is a robustly efficient allocation of $\mathcal E$.
  \end{theorem}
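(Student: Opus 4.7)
For the easy direction ($\Rightarrow$), the plan is a standard budget-argument contradiction. Suppose $(f,\pi)$ is a Walrasian expectations equilibrium and the grand coalition privately blocks $f$ in some $\mathcal E(S,f,r)$ via $y$. Since $f(t,\cdot)$ is $V_t$-maximal on $B_t(\pi)$ and $V_t(y(t,\cdot))>V_t(f(t,\cdot))$, the inequality $\sum_\omega\langle\pi(\omega),y(t,\omega)\rangle>\sum_\omega\langle\pi(\omega),a(t,\omega)\rangle$ holds for almost all $t\in T$; integrating this against $\mu$ and combining with the bound $\sum_\omega\langle\pi(\omega),\int_S(f-a)(\cdot,\omega)d\mu\rangle\le 0$ (which follows from $f(t,\cdot)\in B_t(\pi)$ pointwise on $S$) contradicts the feasibility bound $\int_T y(\cdot,\omega)d\mu\le \int_T a(S,f,r)(\cdot,\omega)d\mu$.

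For the converse ($\Leftarrow$), the plan is to transport $f$ to the atomless economy $\mathcal E^\ast$ and invoke Lemma \ref{lem:corwal}. Lemma \ref{lem:auxrob} first lets me replace $f$ by an $\hat f$ that agrees with $f$ on $T_0$, is constant across $T_1$, has $\int_T\hat f=\int_T f$, and satisfies $V_{T_1}(\hat f(t,\cdot))=V_{T_1}(f(t,\cdot))$ for every $t$; since $\hat f$ remains robustly efficient I assume $f=\hat f$. Define $f^\ast:T^\ast\times\Omega\to Y_+$ by $f^\ast=f$ on $T_0$ and $f^\ast(t,\cdot)=f(A,\cdot)$ on each $A^\ast$ with $A\in T_1$. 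Provided $f^\ast$ lies in the private core of $\mathcal E^\ast$, Lemma \ref{lem:corwal} gives a price $\pi$ such that $(f^\ast,\pi)$ is Walrasian in $\mathcal E^\ast$; restricting the Walrasian conditions to $T_0$ and collapsing the constant $f^\ast|_{A^\ast}$ to $f(A,\cdot)$ yields the Walrasian conditions for $(f,\pi)$ in $\mathcal E$.

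The main work is the private-core claim, which I argue by contradiction. If some coalition $S^\ast$ privately blocks $f^\ast$, apply Lemma \ref{lem:privateblocked} with $\epsilon$ small to extract $(R^\ast,g^\ast)$ and a sub-coalition $R_1^\ast$ with $\int_{R^\ast}(a-g^\ast)d\mu^\ast\gg 0$, $g^\ast\gg 0$ on $R_1^\ast$, and the proportional distribution of $R^\ast$ over large-agent components granted by (iii). A further Lyapunov refinement inside each $A^\ast$ can be used to equalise the atomic proportions $r_A=\mu^\ast(R^\ast\cap A^\ast)/\mu(A)$ to a common $r\in(0,1)$. I then construct a grand-coalition block of $f$ in $\mathcal E(S,f,r)$ with $S=T_1$: on $R^\ast\cap T_0$ set $y=g^\ast$; on $T_0\setminus R^\ast$ set $y=f+\delta e$ for $e\gg 0$ and $\delta>0$ absorbed by the slack so that (A$_3$) gives strict improvement while $\mathcal F_t$-measurability is preserved; on each $A\in T_1$ let $y(A,\cdot)$ be the $\mu^\ast$-average of $g^\ast$ over $R^\ast\cap A^\ast$ plus a positive correction, using partial concavity (A$_4$) together with (A$_3$) to retain a strict utility gain on the atom. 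The common proportion $r$ aligns the blend $(1-r)a+rf$ on $S=T_1$ with the aggregate needed for $y$ to be feasible in $\mathcal E(S,f,r)$.

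The principal obstacle I expect is this final atomic construction: guaranteeing that after (A$_4$)-averaging the utility improvement is \emph{strict} on every atom, and that a single blend $r$ and a single coalition $S$ can simultaneously accommodate all the atomic proportions $r_A$. The freedom to send $\epsilon\to 0$ in Lemma \ref{lem:privateblocked}, combined with the strict slack $\int_{R^\ast}(a-g^\ast)d\mu^\ast\gg 0$ and the strict positivity of $g^\ast$ on $R_1^\ast$, supplies the positive corrections needed, but reducing all $r_A$ to a common $r$ via Lyapunov while preserving the blocking property is the delicate point.
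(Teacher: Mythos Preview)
Your forward direction and the overall plan for the converse (pass to $\mathcal E^\ast$ and invoke Lemma~\ref{lem:corwal}) match the paper. But two substantive points diverge from the paper and leave gaps.

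First, you invoke Lemma~\ref{lem:privateblocked} directly on an arbitrary blocking coalition $S^\ast$, but that lemma requires $\mu^\ast(S^\ast\cap T_1^\ast)>0$, which nothing guarantees. The paper fixes this by first applying Lemma~\ref{lem:freeblock} (the Vind-type result) to enlarge the blocking coalition to measure $\mu^\ast(T_0)+\epsilon$ with $\epsilon=\mu(A_0)$ for a chosen atom $A_0$, which forces $\mu^\ast(S^\ast\cap T_1^\ast)\geq\epsilon>0$; only then does Lemma~\ref{lem:privateblocked} apply.

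Second, and more structurally, the paper does not spread the atomic part of $R^\ast$ across all atoms with a common ratio $r$. Instead it fixes a \emph{single} atom $A_0$, arranges via Lemmas~\ref{lem:freeblock} and~\ref{lem:privateblocked} that $\mu^\ast(R^\ast\cap T_1^\ast)=\mu(A_0)$ exactly, and then collapses the whole of $R^\ast\cap T_1^\ast$ onto $A_0$ by setting $\tilde g(A_0,\cdot)=\frac{1}{\mu(A_0)}\int_{R^\ast\cap T_1^\ast}g^\ast\,d\mu^\ast$. Because (A$_6$) makes all large agents identical, (A$_4$) gives a strict gain on $A_0$, and the endowment identity $\int_{R^\ast\cap T_1^\ast}a\,d\mu^\ast=\mu(A_0)\,a(A_0,\cdot)$ is automatic. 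The resulting coalition $E=(R^\ast\cap T_0)\cup A_0$ in $\mathcal E$ is then fed through Lemma~\ref{lem: block 2} and the final construction of Lemma~\ref{lem:auxrob} to obtain a grand-coalition block in an economy $\mathcal E(T\setminus E,f,r)$ for a suitable $r\in(0,1)$. Your choice $S=T_1$ with $r$ equal to the atomic proportion is a different target; the feasibility bookkeeping is not carried out, and in particular the $1/r$ scaling that arises from averaging $g^\ast$ over $R^\ast\cap A^\ast$ does not obviously align with the $(1-r)a+rf$ endowment. Your ``delicate point'' about equalising the $r_A$ is in fact a non-issue under (A$_6$) (all $A^\ast$ are identical, so one may redistribute freely within $T_1^\ast$), but it is the single-atom reduction that makes the remainder of the argument go through cleanly.

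A smaller issue: you assert that $\hat f$ remains robustly efficient and then replace $f$ by $\hat f$. This is not justified for general $(S,r)$, since $\int_{S\cap T_1}\hat f\neq\int_{S\cap T_1}f$ in general, so feasibility in $\mathcal E(S,\hat f,r)$ and in $\mathcal E(S,f,r)$ differ. The paper avoids this by arguing the contrapositive: if $f$ is not Walrasian then neither is $\hat f$ (a short price argument using (A$_3$)), hence $\hat f^\ast$ is not in the private core of $\mathcal E^\ast$, and the constructed block then contradicts robust efficiency of the \emph{original} $f$.
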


  \begin{proof}
  Suppose that $f$ is a Walrasian expectations allocation of
  $\mathcal E$. Applying an argument similar to that in
  \cite{Herves-Beloso-Moreno-Garcia:08}, one can show that
  it is robustly efficient.

  Conversely, let $f$ be a robustly efficient allocation of
  $\mathcal E$. By Lemma \ref{lem:auxrob}, there is an
  allocation $\hat f$ in $\mathcal E$ such that
  ${\hat f}|_{T_0\times \Omega} =f$, $\hat f (\cdot, \omega)$
  is a constant ${\bf c}(\omega)$ on $T_1$ for each $\omega\in
  \Omega$, $V_{T_1}(\hat f(t, \cdot))= V_{T_1}(f(t, \cdot))$
  for all $t\in T_1$ and $\int_T f d\mu= \int_T \hat f d\mu$.
  Suppose that $f$ is not a Walrasian expectations allocation of
  $\mathcal E$. Then $\hat f$ is not a Walrasian expectations
  allocation for $\mathcal E$. To see this, let $(\hat f,\pi)$
  be a Walrasian expectations equilibrium for $\mathcal E$,
  $d\in {\rm int} Y_+$ and $\alpha> 0$. By (A$_3$), one has
  $V_t(f(t,\cdot)+ \alpha d)> V_t(f(t, \cdot))= V_t(\hat f(t,
  \cdot))$ for all $t\in T$. It follows that for almost all
  $t\in T$,
  \[
  \sum_{\omega\in \Omega}\langle \pi(\omega), f(t,
  \omega)+ \alpha d\rangle> \sum_{\omega\in \Omega}\langle
  \pi(\omega), \hat f(t, \omega)\rangle.
  \]
  Letting $\alpha\to 0$,
  one has $\sum_{\omega\in \Omega}\langle \pi(\omega), f(t,
  \omega)\rangle\ge \sum_{\omega\in \Omega}\langle \pi(\omega),
  \hat f(t, \omega)\rangle$. So,
  \[
  \sum_{\omega\in \Omega}\langle \pi(\omega), f(t, \omega)
  \rangle= \sum_{\omega\in \Omega}\langle \pi(\omega),\hat
  f(t, \omega)\rangle\le \sum_{\omega\in \Omega}\langle
  \pi(\omega), a(t, \omega)\rangle
  \]
  holds for almost all $t\in T$, and one has a contradiction.
  Therefore, the allocation $\hat f^\ast:T^\ast\times \Omega
  \to Y_+$ defined by
  \[
  \hat f^\ast (t, \omega)= \left\{
  \begin{array}{ll}
  \hat{f}(t, \omega), & \mbox{if $(t, \omega)\in T_0\times
  \Omega$;} \\[0.5em]
  {\bf c}(\omega), & \mbox{if $(t,\omega) \in T_1^\ast
  \times \Omega$,}
  \end{array}
  \right.
  \]
  is not a Walrasian expectations allocation of $\mathcal
  E^\ast$. By Lemma \ref{lem:corwal}, $\hat f^\ast$
  is not in the private core of $\mathcal E^\ast$. Pick any
  $A_0\in T_1$ with $\mu(A_0)= \epsilon> 0$. According to
  Lemma \ref{lem:freeblock}, $\hat f^\ast$ is privately
  blocked by a coalition $S^\ast$ of $\mathcal E^\ast$ with
  $\mu^\ast(S^\ast)= \mu^\ast(T_0)+ \epsilon$, which yields
  $\mu^\ast(S^\ast\cap T_1^\ast)\geq \epsilon$. By Lemma
  \ref{lem:privateblocked}, there exists a coalition $R^\ast
  \subseteq S^\ast$, a sub-coalition $R_1^\ast$ of $R^\ast$
  and an $R^\ast$-allocation $g^\ast$ such that (i)-(iii) of
  Lemma \ref{lem:privateblocked} hold. Take a coalition $E$
  of $\mathcal E$ such that $E= (R^\ast\cap T_0)\cup A_0$,
  and define a function $\tilde g: E\times \Omega\to Y_+$
  by
  \[
  \tilde g(t, \omega)= \left\{
  \begin{array}{ll}
  g^\ast(t, \omega), & \mbox{if $(t, \omega)\in (R^\ast\cap
  T_0)\times \Omega$;}\\[0.5em]
  \frac{1}{\epsilon}\int_{R^\ast\cap T_1^\ast}g^\ast(\cdot,
  \omega)d \mu^\ast, & \mbox{otherwise.}
  \end{array}
  \right.
  \]
  Further, define another function $\tilde g^\ast:E^\ast
  \times \Omega\to Y_+$ such that
  \[
  \tilde g^\ast(t, \omega)= \left\{
  \begin{array}{ll}
  \tilde g(t, \omega), & \mbox{if $(t, \omega)\in (R^\ast\cap
  T_0)\times \Omega$;}\\[0.5em]
  \tilde g(A_0, \omega), & \mbox{if $(t,\omega)\in A_0^\ast
  \times\Omega$.}
  \end{array}
  \right.
  \]
  By (A$_4$), one concludes that $\tilde g^\ast$ is an
  $E^\ast$-allocation such that $V_t(\tilde g^\ast(t, \cdot))> V_t
  (\hat f^\ast(t, \cdot))$ for almost all $t\in E^\ast$ and
  $\int_{E^\ast} \tilde g^\ast(\cdot, \omega) d\mu^\ast\ll
  \int_{E^\ast}a(\cdot, \omega) d\mu^\ast$ for all $\omega \in
  \Omega$. Select some $b\gg 0$ such that $\int_{E^\ast}(a(\cdot,
  \omega) d\mu^\ast- \tilde g^\ast(\cdot, \omega)) d\mu^\ast\gg
  b$ for all $\omega\in \Omega$, and consider the function
  $g_b^\ast:E^\ast \times \Omega\to Y_+$ defined by
  $g_b^\ast (t, \omega)= \tilde g^\ast(t, \omega)+ \frac{b
  }{2 \mu^\ast(E^\ast)}$. By (A$_3$), $V_t(g_b^\ast
  (t, \cdot))> V_t(\hat f^\ast(t, \cdot))$ for almost all $t\in
  E^\ast$. Note that the function $g_b:E\times \Omega\to Y_+$,
  defined by
  \[
  g_b(t, \omega)= \left\{
  \begin{array}{ll}
  g_b^\ast(t, \omega), & \mbox{if $(t, \omega)\in (E
  \cap T_0) \times \Omega$;}\\[0.5em]
  \frac{1}{\epsilon}\int_{A_0^\ast} g_b^\ast(\cdot, \omega)d
  \mu^\ast, & \mbox{otherwise,}
  \end{array}
  \right.
  \]
  is an $E$-allocation such that $V_t(g_b(t, \cdot))>
  V_t(f(t, \cdot))$ for almost all $t\in E$. Choose an $r\in (0, 1)$
  satisfying $\tilde g(A_0, \omega)\le r g_b(A_0, \omega)$ for each
  $\omega\in \Omega$. By Lemma \ref{lem: block 2}, there exists an
  $(E\cap T_0)$-allocation $h_b$ such that $V_t(h_b(t, \cdot))>
  V_t(f(t, \cdot))$ for almost all $t\in E\cap T_0$ and
  $\int_{E\cap T_0}h_b(\cdot, \omega)d \mu= \int_{E\cap T_0}(r
  g_b(\cdot, \omega)+ (1- r) f(\cdot, \omega))d \mu$.
  Finally, consider the function $h:E\times\Omega\to Y_+$ defined by
  \[
  h(t, \omega)= \left\{
  \begin{array}{ll}
  h_b(t, \omega), & \mbox{if $(t, \omega)\in (E\cap T_0)\times
  \Omega$;}\\[0.5em]
  \tilde g(t, \omega), & \mbox{otherwise.}
  \end{array}
  \right.
  \]
  Note that $h$ is an $E$-allocation. Applying an argument similar
  to the final part of Lemma \ref{lem:auxrob}, one can show that
  $f$ is not robustly efficient. This is a contradiction and so
  $f$ is a Walrasin expectations allocation.
  \end{proof}

  \begin{remark}
  It is clear that the conclusion of Theorem \ref{thm:walrob} is
  valid in atomless economies whenever assumptions (A$_1$)-(A$_3$)
  and (A$_5$) hold. By Lemma \ref{lem:freeblock}, the coalition $S$
  of $\mathcal E(S, f, r)$ in Theorem \ref{thm:walrob} can be chosen
  arbitrarily small in an atomless economy. Thus, perturbation of
  small coalition is enough to characterize the Walrasian expectations
  allocations.
  \end{remark}

  \subsection{The $RW$-fine core and the ex-post core}
  \label{subsec:fineexpost}

  In this subsection, we establish a relationship between the
  $RW$-fine core and the ex-post core of $\mathcal E$.
  An \emph{information structure} for a coalition $S$ is a family
  $\{\mathcal{G}_t: t\in S\}$ of $\sigma$-algebras such that
  ${\mathcal G}_t \subseteq \mathcal F$ for all $t\in S$ and
  $\{t\in S: \mathcal{G}_t= \mathcal H\} \in \Sigma$ for every
  $\sigma$-algebra ${\mathcal H} \subseteq \mathcal F$. Since
  $\Omega$ is finite, the family $\{{\mathcal G}
  \subseteq {\mathcal F}: {\mathcal G} \mbox{ is a
  $\sigma$-algebra}\}$ is finite. Thus, it is possible
  that for an information structure $\{\mathcal{G}_t: t\in S\}$
  of $S$ and two distinct agents $t$ and $t'$ of $S$,
  $\mathcal G_t= \mathcal G_{t'}$.
  A \emph{communication system} for a coalition
  $S$ is an information structure $\{\mathcal{G}_t: t\in S\}$
  for $S$ such that $\mathcal F_t\subseteq \mathcal{G}_t\subseteq
  \bigvee \mathfrak P_S$ for almost all $t\in S$, and it is called
  a \emph{full communication system} if $\mathcal{G}_t= \bigvee
  \mathfrak P_S$ for almost all $t\in S$.
  Further, for any $\sigma$-algebra $\mathcal H$ with $\mathcal H
  \subseteq \mathcal F$, $\mathcal F$-measurable function $f:\Omega
  \to Y_+$ and $t\in T$, let $\mathbb E_t[f|\mathcal H]$ be the
  conditional expectation of $f$ given $\mathcal H$ with respect
  to $q_t$. For any coalition $S$, we now assume that an
  $S$-allocation (including initial endowment) is a function $f:S
  \times \Omega\to Y_+$ such that $f(\cdot, \omega) \in L_1^S(\mu,
  Y_+)$ for each $\omega\in \Omega$ and $f(t, \cdot)$ is $\mathcal
  F$-measurable for almost all $t\in S$. As mentioned previously,
  $T$-allocations are simply called \emph{allocations}.

  \begin{definition} \cite{Wilson:78}
  An allocation $f$ in $\mathcal E$ is \emph{RW-fine\footnote{RW
  is the abbreviation of Robert Wilson.} blocked} by a coalition
  $S$ if there are an $S$-allocation $g$, a communication system
  $\{\mathcal{G}_t\}_{t\in S}$ for $S$, and a nonempty event
  $A\in \bigcap_{t\in S} \mathcal G_t$ such that $\int_{S}g(\cdot,
  \omega)d\mu= \int_{S}a(\cdot, \omega) d\mu$ for all $\omega\in
  A$, and
  \[
  \mathbb E_t[U_t(\cdot, g(t, \cdot))| \mathcal{G}_t](\omega)>
  \mathbb E_t[U_t(\cdot, f(t, \cdot))| \mathcal{G}_t](\omega)
  \]
  for all $\omega\in A$ and almost all $t\in S$. The \emph{RW-fine
  core} of $\mathcal E$ is the set of all feasible allocations that
  cannot be $RW$-fine blocked by any coalition.
  \end{definition}

  \begin{definition} \cite{Einy-Moreno-Shitovitz:00}
  An allocation $f$ in $\mathcal E$ is \emph{ex-postly blocked} by
  a coalition $S$ if there exist an $S$-allocation $g$ and a state
  $\omega_0\in \Omega$ such that $\int_S g(\cdot, \omega_0)d\mu=
  \int_S a(\cdot,\omega_0)d\mu$, and $U_t(\omega_0, g(t,\omega_0))
  > U_t(\omega_0, f(t, \omega_0))$ for almost all $t\in S$. The
  \emph{ex-post core} of $\mathcal E$ is the set of all feasible
  allocations that cannot be ex-postly blocked by any coalition.
  \end{definition}

  \begin{lemma}\label{lem:fineex}
  Assume that $f$ is in the $RW$-fine core of $\mathcal E$. Under
  \emph{(A$_1$)-(A$_9$)}, there exists an allocation $\hat f$ in
  $\mathcal E$ such that $\hat f|_{T_0\times \Omega}=f$, $\hat
  f(\cdot, \omega)$ is constant on $T_1$ for each $\omega\in \Omega$,
  $U_{T_1}(\omega, \hat f(t, \omega))= U_{T_1}(\omega, f(t, \omega))$
  for all $(t,\omega)\in T_1 \times \Omega$ and $\int_T f d\mu=
  \int_T \hat f d\mu$.
  \end{lemma}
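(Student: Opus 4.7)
The natural choice is to put $\hat f(t,\omega)=f(t,\omega)$ on $T_0\times\Omega$ and $\hat f(t,\omega)=\frac{1}{\mu(T_1)}\int_{T_1}f(\cdot,\omega)\,d\mu$ on $T_1\times\Omega$. Conditions (i), (ii), (iv) of the lemma --- agreement with $f$ on $T_0$, constancy on $T_1$, and equality of aggregate consumption --- are then immediate, and $\hat f(t,\cdot)$ is $\mathcal F_t$-measurable on each atom of $T_1$ because (A$_6$) forces $\mathcal F_t=\mathcal F_{T_1}$ for all $t\in T_1$ while every $f(s,\cdot)$ with $s\in T_1$ is already $\mathcal F_{T_1}$-measurable. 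All the real content is therefore in condition (iii), the per-state identity $U_{T_1}(\omega,\hat f(\omega))=U_{T_1}(\omega,f(t,\omega))$ for every $(t,\omega)\in T_1\times\Omega$.

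Applied to the $T_1$-feasible assignment $f|_{T_1}$, assumption (A$_4$) gives, for each $\omega\in\Omega$,
\[
U_{T_1}(\omega,\hat f(\omega))\ \ge\ \frac{1}{\mu(T_1)}\int_{T_1}U_{T_1}(\omega,f(\cdot,\omega))\,d\mu.
\]
Hence the full identity (iii) reduces to establishing the reverse pointwise inequality $U_{T_1}(\omega,f(t,\omega))\ge U_{T_1}(\omega,\hat f(\omega))$ for every atom $t\in T_1$ and every $\omega\in\Omega$: granted this, integration turns the partial-concavity estimate into an equality of integrals, and since the integrand is bounded below by $U_{T_1}(\omega,\hat f(\omega))$, the equality propagates to every atom of $T_1$.

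To prove the reverse inequality I would argue by contradiction and exhibit an $RW$-fine block of $f$. Assume some $(t_0,\omega_0)\in T_1\times\Omega$ satisfies $U_{T_1}(\omega_0,f(t_0,\omega_0))<U_{T_1}(\omega_0,\hat f(\omega_0))$. By (A$_8$) one can choose a coalition $S$ containing $t_0$ such that $\{\omega_0\}\in\bigvee\mathfrak P_S$, and equip $S$ with the full communication system $\mathcal G_t=\bigvee\mathfrak P_S$, so that the singleton $A=\{\omega_0\}$ lies in $\bigcap_{t\in S}\mathcal G_t$. The decisive simplification is that on this singleton event every conditional expectation $\mathbb E_t[U_t(\cdot,g(t,\cdot))\mid\mathcal G_t](\omega_0)$ collapses to $U_t(\omega_0,g(t,\omega_0))$, so an $RW$-fine block on $\{\omega_0\}$ amounts to an ex-post style block at state $\omega_0$: one needs an $S$-allocation $g$ with $\int_S g(\cdot,\omega_0)\,d\mu=\int_S a(\cdot,\omega_0)\,d\mu$ and $U_t(\omega_0,g(t,\omega_0))>U_t(\omega_0,f(t,\omega_0))$ for almost every $t\in S$. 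I would take $S\cap T_1=\{t_0\}$, augmented by any other large atoms where $f$ underperforms $\hat f$ at $\omega_0$, together with a subset of $T_0$ produced by Lemma \ref{lem:lyapunov} to close the material balance at $\omega_0$; on $S\cap T_1$ set $g(\cdot,\omega_0)$ equal to a small convex perturbation of $\hat f(\omega_0)$ lying in ${\rm int}\, Y_+$, which gives strict utility improvement by (A$_2$)--(A$_3$), and on $S\cap T_0$ apply Lemma \ref{lem: block 2} exactly as in Section \ref{subsec:pribck} to secure simultaneously exact feasibility at $\omega_0$ and strict improvement for almost every small agent. In states $\omega\ne\omega_0$ the values $g(t,\omega)$ are immaterial to the block and may be set equal to $f(t,\omega)$. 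The resulting tuple $(S,g,\{\mathcal G_t\},\{\omega_0\})$ then $RW$-fine blocks $f$, contradicting the hypothesis and proving the reverse inequality.

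The principal technical difficulty is coordinating three requirements at once: admissibility of $\{\omega_0\}$ inside $\bigcap_{t\in S}\mathcal G_t$, strict state-$\omega_0$ utility improvement for every agent in $S$, and exact material balance $\int_S g(\cdot,\omega_0)\,d\mu=\int_S a(\cdot,\omega_0)\,d\mu$. Admissibility is immediate from (A$_8$) via the full communication system; arranging the other two simultaneously forces the Lyapunov-based selection of a small-agent subset to absorb the transfer from $f(t_0,\omega_0)$ onto a perturbation of $\hat f(\omega_0)$, and this is the step where the argument runs most closely parallel to the concluding portion of the proof of Lemma \ref{lem:auxrob}.
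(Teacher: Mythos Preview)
Your strategy coincides with the paper's: define $\hat f$ by averaging over $T_1$, use (A$_4$) for one inequality, and derive the reverse one by contradiction via an $RW$-fine block built along the lines of Lemma~\ref{lem:auxrob}. Two technical points in your construction need repair, however.

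First, (A$_8$) only guarantees $\bigvee\mathfrak P_T=\mathcal F$, not that $\mathcal F$ is the power set of $\Omega$. Hence the singleton $\{\omega_0\}$ need not lie in $\bigvee\mathfrak P_S$ for any coalition $S$, and you cannot take $A=\{\omega_0\}$ as the blocking event. The paper works instead with the atom $A(\omega_0)$ of $\mathcal F$ containing $\omega_0$; once $S$ is chosen so that $\bigvee\mathfrak P_S=\mathcal F$, this atom is admissible. The collapse of conditional expectations to pointwise utilities then requires (A$_9$) (the $\mathcal F$-measurability of $U_t(\cdot,x)$), which you do not invoke.

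Second, setting $g(t,\omega)=f(t,\omega)$ for $\omega\neq\omega_0$ is problematic: in this subsection an $S$-allocation must have $g(t,\cdot)$ $\mathcal F$-measurable, so altering $g$ on a single state that is not itself an $\mathcal F$-atom destroys measurability; moreover, the $RW$-fine block demands exact feasibility for \emph{every} $\omega\in A$, not just at $\omega_0$. The paper handles both issues by setting $y(t,\omega)=a(t,\omega)$ off $A(\omega_0)$ and $y(t,\omega)=h(t)$ (constant in $\omega$) on $A(\omega_0)$; since $a(t,\cdot)$ is $\mathcal F$-measurable, $a$ is constant across $A(\omega_0)$ and exact feasibility extends from $\omega_0$ to the whole atom automatically.
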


  \begin{proof}
  Consider the allocation $\hat f:T\times \Omega\to Y_+$ defined by
  \[
  \hat f(t, \omega)= \left\{
  \begin{array}{ll}
  f(t, \omega), & \mbox{if $(t, \omega)\in T_0\times \Omega$;}\\[0.5em]
  \frac{1}{\mu(T_1)}\int_{T_1} f(\cdot, \omega)d \mu, & \mbox{if $(t,
  \omega)\in T_1\times \Omega$.}
  \end{array}
  \right.
  \]
  One needs to verify $U_{T_1}(\omega, \hat f(t, \omega))=
  U_{T_1}(\omega, f(t, \omega))$ for all $(t,\omega)\in T_1 \times
  \Omega$. Suppose that there exist a coalition $D\subseteq T_1$
  and a state $\omega_0\in \Omega$ such that $U_{T_1}(\omega_0, \hat
  f(t, \omega_0))> U_{T_1}(\omega_0, f(t, \omega_0))$ for all $t\in
  D$. Then, a contradiction can be derived by a proof similar to that
  of Lemma \ref{lem:auxrob} except for the fact that the coalition
  $E_0$ can be chosen as $\bigcup_{\mathcal Q\in \mathfrak P(T_0)}
  E_0^\mathcal Q$, where each $E_0^\mathcal Q$ satisfies the
  condition
  \[
  \left\|\int_{E_0^\mathcal Q}(f(\cdot, \omega_0)- a(\cdot,
  \omega_0))d\mu- r_2 r_3\int_{T_0\cap T_\mathcal Q}(f(\cdot,
  \omega_0)- a(\cdot, \omega_0))d\mu\right\| <\frac{\epsilon}{
  |\mathfrak P(T_0)|},
  \]
  the blocking coalition is of the form $R= S\bigcup \left(T_0
  \setminus \bigcup_{\mathcal Q\in \mathfrak P(T_0)}T_\mathcal
  Q\right)$, where $S$ is defined in Lemma \ref{lem:auxrob},
  and the function $g:R\to Y_+$ is defined by
  \[
  g(t)= \left\{
  \begin{array}{ll}
  \hat f(t, \omega_0)+ \frac{u}{2\mu(E_0)}, & \mbox{if $t
  \in E_0$;}\\[0.5em]
  r_3 \hat f(t, \omega_0)+ \frac{u}{2\mu(C)}, & \mbox{if
  $t\in C$;}\\[0.5em]
  f(t, \omega_0), & \mbox{otherwise.}
  \end{array}
  \right.
  \]
  Note that $\bigvee \mathfrak P_R= \bigvee \mathfrak P_T=
  \mathcal F$ and $\int_R g d\mu \le \int_R a(\cdot, \omega_0)
  d\mu$. Let $b= \int_R a(\cdot, \omega_0)- \int_R g d\mu$.
  Consider a function $h: R\to Y_+$ defined by
  $h(t)= g(t)+ \frac{b}{\mu(R)}$. By (A$_3$), $U_t(\omega_0,
  h(t))> U_t(\omega_0, f(t, \omega_0))$ for almost all $t\in R$.
  Let $A(\omega_0)$ denote the atom of $\mathcal F$ containing
  $\omega_0$. Define a function $y:R\times \Omega\to Y_+$ by
  \[
  y(t, \omega)= \left\{
  \begin{array}{ll}
  h(t), & \mbox{if $(t, \omega)\in R\times
  A(\omega_0)$;}\\[0.5em]
  a(t, \omega), & \mbox{otherwise.}
  \end{array}
  \right.
  \]
  Then, $y$ is an $R$-allocation. Since $a(t, \cdot)$ is
  $\mathcal{F}$-measurable, $a(t, \omega)= a(t, \omega^\prime)$
  for almost all $t\in R$ and all $\omega, \omega^\prime \in
  A(\omega_0)$. Hence, $\int_{R}y(\cdot, \omega)d\mu=
  \int_{R}a(\cdot,\omega) d\mu$ for all $\omega\in A(\omega_0)$.
  By (A$_8$), $\bigvee \mathfrak P_R= \mathcal F$. Thus using
  (A$_9$), one has $\mathbb E_t[U_t(\cdot, f(t, \cdot))| \bigvee
  \mathfrak P_R]= U_t(\cdot, f(t, \cdot))$ and $\mathbb
  E_t[U_t(\cdot, y(t, \cdot))| \bigvee \mathfrak P_R]= U_t(\cdot,
  y(t, \cdot))$. Further, for all $\omega\in A(\omega_0)$ and
  almost all $t\in R$, one has that
  \begin{eqnarray*}
  \mathbb E_t\left[U_t(\cdot, y(t, \cdot))\big| \bigvee \mathfrak
  P_R\right](\omega)
  &=& U_t(\omega, y(t, \omega))= U_t(\omega_0, \hat h(t))\\
  &>& U_t(\omega_0, \hat f(t, \omega_0))\\ \noindent
  &=& \mathbb E_t\left[U_t(\cdot, f(t, \cdot))\big| \bigvee
  \mathfrak P_R\right](\omega),
  \end{eqnarray*}
  which implies that $f$ is $RW$-fine blocked by $R$ via $y$.
  This contradicts with the assumption. Hence,
  $U_{T_1}(\omega, f(t,\omega))\ge U_{T_1}(\omega, \hat f(t,
  \omega))$ for all $(t,\omega)\in T_1 \times \Omega$. By an
  argument similar to that in Lemma \ref{lem:auxrob}, one can
  further show $U_{T_1}(\omega,\hat f(t, \omega))= U_{T_1}
  (\omega, f(t,\omega))$ for all $(t, \omega)\in T_1 \times
  \Omega$.
  \end{proof}

  The following theorem is an extension of Theorem 3.1 in
  \cite{Einy-Moreno-Shitovitz:00} to mixed economies with
  infinitely many commodities and the exact feasibility. In
  addition, the assumption $\mathfrak P_T= \mathfrak P(T)$
  used by Einy et al. is not assumed in our result. To this
  end, we assume that for each $\omega\in \Omega$, $\mathcal
  E(\omega)$ denotes the symmetric information economy whose
  space of agents are $T$, and whose the consumption set, the
  utility function and the initial endowment of agent $t$ are
  $Y_+$, $U_t(\omega,\cdot)$ and $a(t, \omega)$ respectively.

  \begin{theorem}\label{thm:Ex-postCore}
  Assume that $\mathcal E$ satisfies \emph{(A$_1$)-(A$_9$)}. If
  $f$ is in the $RW$-fine core of $\mathcal E$, then it is also
  in the ex-post core of $\mathcal E.$
  \end{theorem}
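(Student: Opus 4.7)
The plan is to argue by contrapositive: assume $f$ is not in the ex-post core of $\mathcal E$ and exhibit an $RW$-fine blocker of $f$, contradicting $f \in RW$-fine core. First I would apply Lemma \ref{lem:fineex} to obtain $\hat f$ agreeing with $f$ on $T_0 \times \Omega$, constant in $t$ on each slice $T_1 \times \{\omega\}$, and satisfying $U_{T_1}(\omega, \hat f(t, \omega)) = U_{T_1}(\omega, f(t, \omega))$ on $T_1 \times \Omega$. Per-state utilities of $f$ and $\hat f$ then coincide on all of $T$, so an ex-post blocker of $f$ is one of $\hat f$. Fix therefore an ex-post triple $(S, \omega_0, g)$ with $\int_S g(\cdot, \omega_0)\, d\mu = \int_S a(\cdot, \omega_0)\, d\mu$ and $U_t(\omega_0, g(t, \omega_0)) > U_t(\omega_0, \hat f(t, \omega_0))$ for almost all $t \in S$.

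The next step is to freeze $g$ along the atom $A(\omega_0) \in \mathcal F$ containing $\omega_0$, on which $a(t, \cdot)$, $f(t, \cdot)$ and, by (A$_9$), $U_t(\cdot, x)$ are all constant. Define $y: S \times \Omega \to Y_+$ by $y(t, \omega) = g(t, \omega_0)$ when $\omega \in A(\omega_0)$ and $y(t, \omega) = a(t, \omega)$ otherwise; then $y(t, \cdot)$ is $\mathcal F$-measurable, and the ex-post feasibility at $\omega_0$ together with the constancy of $a(t, \cdot)$ on $A(\omega_0)$ yields $\int_S y(\cdot, \omega)\, d\mu = \int_S a(\cdot, \omega)\, d\mu$ for every $\omega \in A(\omega_0)$. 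To pick a valid communication system I would enlarge $S$ to a coalition $R$ with $\bigvee \mathfrak P_R = \bigvee \mathfrak P_T = \mathcal F$ (the last equality from (A$_8$)) by adjoining a $\mu$-null subset of $T_0$ carrying one representative of each information type missing from $\mathfrak P_S$; extending $y$ by $y = a$ on the adjoined part preserves $\mathcal F$-measurability and feasibility on $A(\omega_0)$. The full communication system $\mathcal G_t = \mathcal F$ is then admissible, since $\mathcal F_t \subseteq \mathcal F = \bigvee \mathfrak P_R$ for almost all $t \in R$, and $A(\omega_0) \in \bigcap_{t \in R} \mathcal G_t$.

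For the conclusion, because $U_t(\cdot, x)$, $f(t, \cdot)$ and $y(t, \cdot)$ are all $\mathcal F$-measurable, the conditional expectations given $\mathcal G_t = \mathcal F$ collapse to pointwise utilities, so for every $\omega \in A(\omega_0)$ and almost all $t \in R$,
\[
\mathbb{E}_t[U_t(\cdot, y(t, \cdot)) \mid \mathcal G_t](\omega) = U_t(\omega_0, g(t, \omega_0)) > U_t(\omega_0, \hat f(t, \omega_0)) = U_t(\omega_0, f(t, \omega_0)) = \mathbb{E}_t[U_t(\cdot, f(t, \cdot)) \mid \mathcal G_t](\omega),
\]
contradicting $f \in RW$-fine core. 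The main obstacle lies in the third step: when the unique large-agent information type $\mathcal Q_0$ (forced by (A$_6$)) is missing from $\mathfrak P_S$ and is shared by no small agent in $T_0$, the $\mu$-null adjunction above does not suffice and a positive-measure slice of $T_1$ must be absorbed into $R$. In that subcase one would extend $y$ on the added atoms by leveraging the per-state utility invariance of $\hat f$ on $T_1$ coming from Lemma \ref{lem:fineex}, together with partial concavity (A$_4$) and strict positivity (A$_5$), to preserve the strict utility gain.
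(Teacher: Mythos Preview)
Your contrapositive strategy and the ``easy case'' are correct and indeed match the final step of the paper's proof: once you have a blocking coalition $R$ with $\bigvee\mathfrak P_R=\mathcal F$, freezing on the atom $A(\omega_0)$, using the full communication system, and collapsing conditional expectations via (A$_9$) is exactly how the paper concludes. The null-adjunction of missing small-agent types is also what the paper does at the very end, via $E=R\cup\bigl(T_0\setminus\bigcup_{\mathcal Q\in\mathfrak P(T_0)}T_{\mathcal Q}\bigr)$.

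The genuine gap is in your hard subcase. You correctly note that when $\mathcal Q_0\notin\mathfrak P_S$ and no small agent carries $\mathcal Q_0$, an atom $A_0\in T_1$ must be adjoined with positive measure, and then the $RW$-fine blocking requires a \emph{strict} utility gain at $A_0$ on $A(\omega_0)$. Your proposed fix does not deliver this: the only surplus available comes from shaving $g$ on $S$ (Lemma~\ref{lem: block 1} style), which lets you hand $A_0$ the bundle $a(A_0,\omega_0)+z/\mu(A_0)$ for some $z\gg 0$. By (A$_3$) this beats $a(A_0,\omega_0)$, not $f(A_0,\omega_0)$; there is no reason $U_{A_0}(\omega_0,a(A_0,\omega_0)+z/\mu(A_0))>U_{A_0}(\omega_0,f(A_0,\omega_0))$. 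Invoking (A$_4$) does not help either, since partial concavity produces an average over $T_1$-mass already present in the blocking coalition, and here $S\subseteq T_0$ carries none. Lemma~\ref{lem:fineex} only equates $U_{T_1}(\omega,\hat f)$ with $U_{T_1}(\omega,f)$; it gives you nothing to dominate $f(A_0,\omega_0)$ with.

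The paper closes this gap by a genuinely different route. It passes to the auxiliary atomless economy $\mathcal E^\ast$, shows $\hat f^\ast(\cdot,\omega_0)$ is not in the core of $\mathcal E^\ast(\omega_0)$, and then uses the Vind-type Lemma~\ref{lem:freeblock} to \emph{inflate} the blocking coalition to a prescribed large measure, forcing it to contain positive $T_1^\ast$-mass and to meet every positive-measure information type in $T_0$. A Lyapunov argument (as in Lemma~\ref{lem:privateblocked}) then shrinks this coalition proportionally so that its $T_1^\ast$-part has measure exactly $\mu(A_0)$ while all type proportions are preserved. Only now does (A$_4$) do real work: averaging the blocking allocation over $R^\ast\cap T_1^\ast$ yields a single bundle for $A_0$ that strictly beats $\hat f^\ast(\cdot,\omega_0)$, hence $f(A_0,\omega_0)$, by partial concavity. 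The null-adjunction you describe is then applied as a last cosmetic step. In short, your argument is missing the passage through $\mathcal E^\ast$ and the Vind inflation, without which the atom cannot be made strictly better off.
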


  \begin{proof}
  Suppose that $f$ is not in the ex-post core of $\mathcal E$.
  The allocation $\hat f$ defined in Lemma \ref{lem:fineex} is
  not in the ex-post core of $\mathcal E$ either. Then there
  is a state $\omega_0\in \Omega$ such that $\hat f(\cdot,
  \omega_0)$ is a feasible allocation in the symmetric information
  economy $\mathcal E(\omega_0)$ and is not in the core of $\mathcal
  E(\omega_0)$. Consider an allocation $\hat f^\ast:T^\ast\times
  \Omega\to Y_+$ defined by $\hat f^\ast(t, \omega) = \hat f(t,
  \omega)$, if $(t, \omega)\in T_0\times \Omega$; and $\hat f^\ast(t,
  \omega) = \hat f(T_1, \omega)$, if $(t, \omega)\in T_1^\ast\times
  \Omega$, where $\hat f(T_1, \omega)$ denotes the constant value of
  $\hat f(\cdot,\omega)$ on $T_1$ for each $\omega\in \Omega$. Then
  $\hat f^\ast(\cdot,\omega_0)$ is a feasible allocation in $\mathcal
  E^\ast(\omega_0)$ and $\hat f^\ast(\cdot, \omega_0)$ is not in
  the core of $\mathcal E^\ast(\omega_0)$. Choose an arbitrary
  $A_0\in T_1$ and let $\mu(A_0)= \epsilon >0$. Note that under
  {\rm (A$_3$)}, the conclusion of Lemma \ref{lem:freeblock}
  also holds with the exact feasibility in a deterministic
  economy. Thus, $\hat f^\ast(\cdot, \omega_0)$ is blocked by a
  coalition $S^\ast$ via $\hat g^\ast$ such that $\mu^\ast(S^\ast)
  =\mu^\ast(T_0)+ \epsilon$, if
  \[
  \mu^\ast(T_1^\ast\setminus A_0^\ast)< \min \{\mu^\ast(T_0\cap
  T_{\mathcal Q}^\ast): \mathcal Q\in \mathfrak P(T_0)\},
  \]
  and otherwise,
  $\mu^\ast(S^\ast)> \mu^\ast(T_0) -\min\{\mu^\ast(T_0\cap
  T_{\mathcal Q}^\ast): \mathcal Q\in \mathfrak P(T_0)\}+ \
  \mu^\ast(T_1^\ast)$.
  Clearly, $\mu^\ast(S^\ast \cap T_1^\ast)\geq \epsilon$ and
  $\bigvee \mathfrak P(S^\ast)= \bigvee \mathfrak P(T)$. Let
  $\alpha= \frac{\epsilon}{\mu^\ast (S^\ast\cap T_1^\ast)}$.
  Applying (A$_3$) and an argument similar to that in Lemma
  \ref{lem:privateblocked}, one can show that there exists a
  coalition $R^\ast\subseteq \bigcup_{\mathcal Q\in \mathfrak
  P(T^\ast)}(S^\ast\cap T_\mathcal Q^\ast)$
  blocking $\hat f^\ast(\cdot, \omega_0)$ via $\hat
  h^\ast: R^\ast\to Y_+$ in
  $\mathcal E^\ast(\omega_0)$ such that $\mu^\ast(R^\ast\cap
  T_{\mathcal Q}^\ast)= \alpha \mu^\ast(S^\ast\cap T_{\mathcal
  Q}^\ast)$ for all $\mathcal Q\in \mathfrak P(S^\ast)$ and
  $\mu^\ast(R^\ast\cap T_1^\ast)= \epsilon$. Note that
  $\bigvee \mathfrak P(R^\ast)= \bigvee \mathfrak P(S^\ast)$.
  Consider a coalition $R$ of $\mathcal E$ define by $R= (R^\ast
  \cap T_0)\cup A_0$. Then, $\bigvee \mathfrak P(R)= \bigvee
  \mathfrak P(T)$. We consider a function $\hat h: R\to Y_+$
  defined by
  \[
  \hat h(t) = \left\{
  \begin{array}{ll}
  \hat h^\ast(t), & \mbox{if $t\in R^\ast\cap T_0$;}\\[0.5em]
  \frac{1}{\epsilon}\int_{R^\ast\cap T_1^\ast} \hat h^\ast
  d\mu^\ast, & \mbox{otherwise.}
  \end{array}
  \right.
  \]
  Obviously, $U_t(\omega_0, \hat h(t))> U_t (\omega_0, \hat f(t,
  \omega_0))$ if $t\in R^\ast\cap T_0$. By (A$_4$), $U_{T_1}
  (\omega_0,\hat h(t))> U_{T_1}(\omega_0, \hat f(t, \omega_0))$
  if $t= A_0$. Moreover, $\int_R \hat h d\mu = \int_R a(\cdot,
  \omega_0)d\mu$. Define a coalition $E= R\cup \left(T_0
  \setminus \bigcup_{\mathcal Q\in \mathfrak P(T_0)} T_\mathcal
  Q\right)$. Then $\bigvee \mathfrak P_T= \bigvee \mathfrak
  P_E$. Let $A(\omega_0)$ be the atom of $\bigvee \mathfrak
  P_T$ containing $\omega_0$. Now, define a function $y:E
  \times \Omega\to Y_+$ such that
  \[
  y(t, \omega) = \left\{
  \begin{array}{ll}
  \hat h(t), & \mbox{if $(t, \omega)\in R\times A(\omega_0)$;}
  \\[0.5em]
  a(t, \omega), & \mbox{otherwise.}
  \end{array}
  \right.
  \]
  Then, $y$ is an $E$-allocation. Applying an argument similar
  to that in Lemma \ref{lem:fineex}, one can show that $f$
  is $RW$-fine blocked by $E$ via $y$. This contradicts with
  the assumption, which completes the proof.
  \end{proof}

  \begin{remark}
  It is obvious from the proof of Theorem \ref{thm:Ex-postCore}
  that a similar result holds for atomless economies under
  (A$_1$)-(A$_3$), (A$_5$) and (A$_8$)-(A$_9$) only.
  \end{remark}

  \subsection{The weak fine core}
  \label{subsec:weakfine}

  In this subsection, we extend Proposition 5.1 in
  \cite{Einy-Moreno-Shitovitz:01} to mixed economies with infinitely
  many commodities and the exact feasibility. We also relax the
  assumption $\mathfrak P_T= \mathfrak P(T)$.

  \begin{definition}
  A feasible assignment $f$ in $\mathcal E$ is said to be in the
  \emph{weak fine core} of $\mathcal E$ if $f(t, \cdot)$ is $\bigvee
  \mathfrak P_T$-measurable for almost all $t\in T$, and $f$ cannot
  be $NY$-fine blocked by any coalition.
  \end{definition}

  In the sequel, the economy $\mathcal E^s$ is similar to
  $\mathcal{E}$ except for the information of every agent being
  $\bigvee \mathfrak P_T$. The proof of the next lemma is similar
  to that of Lemma \ref{lem:fineex}.

  \begin{lemma}\label{lem:weakpri}
  Assume that $f$ is in the weak fine core of $\mathcal E$.
  Under \emph{(A$_1$)-(A$_7$)}, there exists an allocation $\hat f$
  such that $\hat{f}|_{T_0\times \Omega}=f$, $\hat f(\cdot, \omega)$
  is constant on $T_1$ for each $\omega\in \Omega$, $V_{T_1}(\hat
  f(t, \cdot))= V_{T_1}(f(t, \cdot))$ for almost all $t\in T_1$ and
  $\int_T f d\mu= \int_T \hat f d\mu$.
  \end{lemma}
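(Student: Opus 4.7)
The plan is to mirror the proof of Lemma \ref{lem:fineex}, which itself echoes Lemma \ref{lem:auxrob}, adapting the blocking construction to the weak fine core setting. First I define
\[
\hat f(t, \omega) = \begin{cases} f(t, \omega), & (t, \omega)\in T_0\times \Omega,\\ \frac{1}{\mu(T_1)}\int_{T_1} f(\cdot, \omega)d\mu, & (t, \omega)\in T_1\times \Omega. \end{cases}
\]
Then $\hat f|_{T_0\times \Omega}= f$, $\hat f(\cdot, \omega)$ is constant on $T_1$ for each $\omega$, and $\int_T f d\mu= \int_T \hat f d\mu$ holds automatically. Moreover, since $f(t, \cdot)$ is $\bigvee \mathfrak P_T$-measurable for almost all $t\in T$ (because $f$ lies in the weak fine core), so is $\hat f$. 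Thus the whole task reduces to verifying that $V_{T_1}(\hat f(t, \cdot))= V_{T_1}(f(t, \cdot))$ for almost all $t\in T_1$.

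The main step is proving $V_{T_1}(f(t, \cdot))\ge V_{T_1}(\hat f(t, \cdot))$ for almost all $t\in T_1$, by contradiction. Assume there exists $D\subseteq T_1$ with $\mu(D)>0$ and $V_{T_1}(\hat f(t, \cdot))> V_{T_1}(f(t, \cdot))$ for all $t\in D$. Following the scheme of Lemma \ref{lem:auxrob}, I would pick $r_1\in (0,1)$ and a sub-coalition $C\subseteq D$ so that $V_{T_1}(r_1\hat f(t, \cdot))> V_{T_1}(f(t, \cdot))$ on $C$, set $r_2=\mu(C)/\mu(T_1)$ and $r_3= r_1+\eta$ for $\eta>0$ small with $r_3\in(0,1)$. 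For each $\mathcal Q\in \mathfrak P(T_0)$, Lemma \ref{lem:lyapunov} applied to $f-a$ on $T_0\cap T_\mathcal Q$ yields $E_0^\mathcal Q\in \Sigma_{T_0\cap T_\mathcal Q}$ with $\int_{E_0^\mathcal Q}(f-a)d\mu$ arbitrarily close to $r_2 r_3\int_{T_0\cap T_\mathcal Q}(f-a)d\mu$. Setting $E_0=\bigcup_{\mathcal Q\in \mathfrak P(T_0)} E_0^\mathcal Q$, the candidate coalition is $R= E_0\cup C\cup \left(T_0\setminus \bigcup_{\mathcal Q\in \mathfrak P(T_0)} T_\mathcal Q\right)$, chosen precisely so that $\mathfrak P_R= \mathfrak P_T$ and hence $\bigvee \mathfrak P_R= \bigvee \mathfrak P_T$. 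Assembling $g$ on $R$ from $\hat f$ plus a strictly positive perturbation on $E_0$, from $r_3\hat f$ plus a strictly positive perturbation on $C$, and from $a$ on the remainder, and invoking Lemma \ref{lem: block 2} to replace the $E_0$-piece by an $E_0$-allocation with integral equal to $\int_{E_0}(rg+(1-r)f)d\mu$, I obtain an $R$-assignment that strictly improves $V_t$ almost everywhere on $R$ with $\int_R g d\mu\le \int_R a d\mu$.

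The main obstacle is converting this into an $R$-\emph{exactly} feasible, $\bigvee \mathfrak P_R$-measurable assignment, as required by the definition of $NY$-fine blocking (the private blocking of Lemma \ref{lem:auxrob} only gave inequality-feasibility in a perturbed economy). To overcome it, I set $b(\omega)= \int_R(a(\cdot, \omega)- g(\cdot, \omega))d\mu$; since $a(t, \cdot)$ and $f(t, \cdot)$ are $\bigvee \mathfrak P_T$-measurable and $\bigvee \mathfrak P_R= \bigvee \mathfrak P_T$, the function $b$ is $\bigvee \mathfrak P_R$-measurable, and $b(\omega)\gg 0$ by construction when the approximation parameters are chosen sufficiently well. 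Replacing $g$ by $g+ b/\mu(R)$ preserves strict utility improvement by (A$_3$), restores exact feasibility, and retains $\bigvee \mathfrak P_R$-measurability. This contradicts $f$ being in the weak fine core, giving the required inequality.

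Finally, the reverse inequality $V_{T_1}(\hat f(t, \cdot))\ge V_{T_1}(f(t, \cdot))$ is immediate from (A$_4$): partial concavity applied to the feasible $f$ with $S=T$ yields $V_{T_1}(\hat f)\ge \frac{1}{\mu(T_1)}\int_{T_1} V_{T_1}(f(t, \cdot))d\mu$, and if equality failed on a set of positive measure this would combine with the previous step to produce $V_{T_1}(\hat f)> V_{T_1}(\hat f)$, a contradiction. Hence $V_{T_1}(\hat f(t, \cdot))= V_{T_1}(f(t, \cdot))$ for almost all $t\in T_1$, completing the proof.
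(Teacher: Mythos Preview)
Your overall plan matches the paper's: it states that the proof is ``similar to that of Lemma~\ref{lem:fineex}'', and you correctly define $\hat f$, reduce to showing $V_{T_1}(\hat f(t,\cdot))=V_{T_1}(f(t,\cdot))$ on $T_1$, build the blocking coalition $R$ so that $\bigvee\mathfrak P_R=\bigvee\mathfrak P_T$, and close with the concavity argument from (A$_4$). The idea of achieving exact feasibility by adding the slack $b/\mu(R)$ is also the right one.

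There is, however, a genuine gap in your invocation of Lemma~\ref{lem: block 2}. After replacing the $E_0$-piece so that its integral equals $\int_{E_0}(rg+(1-r)f)\,d\mu$ (and, following Lemma~\ref{lem:auxrob}, using $r_1\hat f$ on $C$), the resulting $R$-integral is only bounded above by $r\int_S g\,d\mu + (1-r)\int_S f\,d\mu$, where $S=E_0\cup C$. To conclude $\int_R g\,d\mu\le \int_R a\,d\mu$ from this you would need $\int_S f\,d\mu\le \int_S a\,d\mu$, which does not follow from feasibility of $f$ on $T$. In Lemma~\ref{lem:auxrob} the device of Lemma~\ref{lem: block 2} is tailored to feasibility in the \emph{perturbed} economy $\mathcal E(T\setminus S,f,r)$; that mechanism is irrelevant here, since the weak fine core requires $NY$-fine blocking in $\mathcal E$ itself.

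The fix is simply to omit Lemma~\ref{lem: block 2}, exactly as the paper does in Lemma~\ref{lem:fineex}. Your assembled $g$ (before any replacement) already satisfies $V_t(g(t,\cdot))>V_t(f(t,\cdot))$ for almost all $t\in R$, is $\bigvee\mathfrak P_R$-measurable, and---by the $\alpha(\omega)$-computation carried over verbatim from Lemma~\ref{lem:auxrob}---satisfies $\int_R g(\cdot,\omega)\,d\mu\ll \int_R a(\cdot,\omega)\,d\mu$ for all $\omega$. Your final step of adding $b(\omega)/\mu(R)$ with $b(\omega)=\int_R(a(\cdot,\omega)-g(\cdot,\omega))\,d\mu$ then works as written.
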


  \begin{theorem}\label{thm:weakfine}
  Assume that $\mathcal{E}$ satisfies \emph{(A$_1$)-(A$_7$)}. Then
  $f$ is in the weak fine core of $\mathcal E$ if and only if $f$
  is in the private core of $\mathcal E^s$.
  \end{theorem}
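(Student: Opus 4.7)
The plan is to handle the two directions asymmetrically. The sufficiency $(\Leftarrow)$ is an essentially formal consequence of the inclusion $\bigvee \mathfrak P_S\subseteq \bigvee \mathfrak P_T$ for every coalition $S$, while the necessity $(\Rightarrow)$ will be handled by transferring to the associated atomless economy and exploiting the Vind-type conclusion of Lemma \ref{lem:freeblock} together with Lemma \ref{lem:weakpri}.

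For $(\Leftarrow)$, if $f$ lies in the private core of $\mathcal E^s$, then $f$ is feasible and each $f(t,\cdot)$ is $\bigvee \mathfrak P_T$-measurable, as the weak fine core requires. Suppose, towards a contradiction, that $f$ were $NY$-fine blocked in $\mathcal E$ by a coalition $S$ via an $S$-exactly feasible $g$ with $g(t,\cdot)$ being $\bigvee \mathfrak P_S$-measurable. Since $\bigvee \mathfrak P_S\subseteq \bigvee \mathfrak P_T$, each $g(t,\cdot)$ is also $\bigvee \mathfrak P_T$-measurable, and $S$-exact feasibility implies $S$-feasibility, so $S$ and $g$ would privately block $f$ in $\mathcal E^s$.

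For $(\Rightarrow)$, assume $f$ is in the weak fine core and, towards a contradiction, that $f$ is not in the private core of $\mathcal E^s$: some coalition $S$ and $S$-feasible $g$ with $g(t,\cdot)$ being $\bigvee \mathfrak P_T$-measurable satisfies $V_t(g(t,\cdot))>V_t(f(t,\cdot))$ almost everywhere on $S$. By Lemma \ref{lem:weakpri} I pass to $\hat f$ agreeing with $f$ on $T_0$ and constant on $T_1$; lifting $\hat f$ and $g$ to $\hat f^\ast$ and $g^\ast$ on $T^\ast$ (spreading the common values over $T_1^\ast$) produces a private blocking of $\hat f^\ast$ in $(\mathcal E^\ast)^s$. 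Applying Lemma \ref{lem:freeblock} to $(\mathcal E^\ast)^s$, together with Lyapunov convexity (Lemma \ref{lem:lyapunov}) worked piece-by-piece on each $T_\mathcal Q^\ast$ as in Lemma \ref{lem:privateblocked}, I shape a private blocking coalition $\tilde S^\ast$ so that (i) $\mu^\ast(\tilde S^\ast\cap T_\mathcal Q^\ast)>0$ for every $\mathcal Q\in \mathfrak P(T)$, giving $\bigvee \mathfrak P_{\tilde S^\ast}=\bigvee \mathfrak P_T$, and (ii) for every atom $A\in T_1$, either $A^\ast\subseteq \tilde S^\ast$ or $A^\ast\cap \tilde S^\ast=\emptyset$. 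Adding the $\bigvee \mathfrak P_T$-measurable slack $\frac{1}{\mu^\ast(\tilde S^\ast)}\int_{\tilde S^\ast}(a-\tilde g^\ast)d\mu^\ast$ uniformly to the blocking allocation $\tilde g^\ast$ promotes it, via (A$_3$), to an exactly feasible $NY$-fine blocking of $\hat f^\ast$ in $\mathcal E^\ast$.

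Descending to $\mathcal E$, I set $\tilde S=(\tilde S^\ast\cap T_0)\cup \{A\in T_1: A^\ast\subseteq \tilde S^\ast\}$, which by (ii) is a coalition of $\mathcal E$ and by (i) satisfies $\bigvee \mathfrak P_{\tilde S}=\bigvee \mathfrak P_T$; I define $\tilde g$ on $\tilde S$ by $\tilde g^\ast$ on $\tilde S\cap T_0$ and by $\frac{1}{\mu^\ast(A^\ast)}\int_{A^\ast}\tilde g^\ast(\cdot,\omega)d\mu^\ast$ on each $A\in T_1\cap \tilde S$. Partial concavity (A$_4$) ensures $V_A(\tilde g(A,\cdot))\ge \frac{1}{\mu^\ast(A^\ast)}\int_{A^\ast}V_t(\tilde g^\ast(t,\cdot))d\mu^\ast>V_A(\hat f(A,\cdot))$, and the atom-respecting property (ii) gives the aggregate identity $\int_{\tilde S}\tilde g\,d\mu=\int_{\tilde S^\ast}\tilde g^\ast\,d\mu^\ast=\int_{\tilde S^\ast}a\,d\mu^\ast=\int_{\tilde S}a\,d\mu$, so $\tilde g$ is $\bigvee \mathfrak P_{\tilde S}$-measurable, exactly $\tilde S$-feasible, and $NY$-fine blocks $f$ in $\mathcal E$, contradicting the weak fine core hypothesis. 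The hard step will be the simultaneous enforcement of (i) and (ii): the Lyapunov shaping is free on each $T_0\cap T_\mathcal Q^\ast$ but only all-or-nothing on each atom-trace $A^\ast\subseteq T_1^\ast$, so the coalition must be assembled piece-by-piece while preserving the private blocking of $\hat f^\ast$ in $(\mathcal E^\ast)^s$.
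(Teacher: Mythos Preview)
Your overall strategy --- pass to the atomless economy $\mathcal E^\ast$, use the Vind-type Lemma~\ref{lem:freeblock} to shape a private blocking coalition in $(\mathcal E^\ast)^s$, then descend to $\mathcal E$ --- is essentially the paper's route (which mirrors the proof of Theorem~\ref{thm:Ex-postCore}, after first encoding $\mathcal E^s$ as a deterministic economy with commodity space $Y_+^k$ via the map $\gamma$). The direction $(\Leftarrow)$ is handled identically. However, two points in your execution of $(\Rightarrow)$ need repair.

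First, the inference ``(i) gives $\bigvee \mathfrak P_{\tilde S^\ast}=\bigvee \mathfrak P_T$'' is not correct: having $\mu^\ast(\tilde S^\ast\cap T_\mathcal Q^\ast)>0$ for each $\mathcal Q\in\mathfrak P(T)$ only yields $\bigvee\mathfrak P(T)\subseteq\bigvee\mathfrak P_{\tilde S^\ast}\subseteq\bigvee\mathfrak P_T$, and when $\mathfrak P_T\neq\mathfrak P(T)$ (null-measure information types are present) the inclusion may be strict. The paper closes this gap by adjoining the $\mu$-null set $T_0\setminus\bigcup_{\mathcal Q\in\mathfrak P(T_0)}T_\mathcal Q$ to the blocking coalition, letting those agents consume $a(t,\cdot)$; this recovers $\bigvee\mathfrak P_E=\bigvee\mathfrak P_T$ without disturbing exact feasibility or the utility inequalities.

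Second, and more substantively, your atom-respecting condition (ii) is stronger than necessary and you supply no mechanism to enforce it; you yourself flag this as ``the hard step'', and with countably many atom-traces $A^\ast$ that can only be taken all-or-nothing, there is no evident way to assemble $\tilde S^\ast$ piece-by-piece while simultaneously preserving the private blocking and the slack needed for (A$_3$). The paper avoids this entirely: since by (A$_6$) all large agents are of the same type, one fixes a single atom $A_0\in T_1$, shapes the blocking coalition (via the arguments of Lemmas~\ref{lem:freeblock} and~\ref{lem:privateblocked}) so that $\mu^\ast(R^\ast\cap T_1^\ast)=\mu(A_0)$, and then descends by setting $R=(R^\ast\cap T_0)\cup A_0$ and assigning to $A_0$ the single average $\frac{1}{\mu(A_0)}\int_{R^\ast\cap T_1^\ast}g^\ast(\cdot,\omega)\,d\mu^\ast$, invoking (A$_4$) once. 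With this simplification in place of (ii), your argument goes through.
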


  \begin{proof}
  It is clear that if $f$ is in the private core of $\mathcal E^s$,
  then $f$ is in the weak fine core of $\mathcal E$. Now, assume
  that $f$ is in the weak fine core of $\mathcal E$. Let $\bigvee
  \mathfrak P_T$ be generated by the partition $\{A_1, ...,A_k\}$
  of $\Omega$, and let $X$ denote the set of all $\bigvee \mathfrak
  P_T$-measurable elements of $(Y_+)^\Omega$. Define a function
  $\gamma:X\to Y_+^k$ such that $\gamma(f)= f_s$, where $f_s=
  (f(\omega_1), ...,f(\omega_k))$ if $\omega_j \in A_j$ for all
  $1\leq j\leq k$. Now for all $t\in T$, consider a function
  $V_t^s:Y_+^k \to \mathbb R$ defined by $V_t^s(f_s)=
  V_t(\gamma^{-1}(f_s))$. Let $\widetilde{\mathcal E}^s$ be a
  symmetric information economy whose space of economic agents is
  $(T, \Sigma, \mu)$, and in which the consumption set of every agent
  is $Y_+^k$, the utility function and initial endowment of agent
  $t$ are $V_t^s$ and $a^s(t)= \gamma(a(t, \cdot))$ respectively.
  Suppose that $f$ is not in the private core of $\mathcal E^s$.
  Then $f_s$ is not in the private core of $\widetilde{\mathcal
  E}^s$. Thus $\hat f_s$ is is not in the private core of
  $\widetilde{\mathcal E}^s$. Applying an argument similar to
  that in the proof of Theorem \ref{thm:Ex-postCore}, one can
  show that there is a coalition $R\subseteq \bigcup_{\mathcal
  Q\in \mathfrak P(T)} T_\mathcal Q$ blocking $f^s$ via $h^s$
  such that $\bigvee \mathfrak P(R)= \bigvee \mathfrak P(T)$. Let
  $E= R\cup \left(T_0\setminus \bigcup_{\mathcal Q\in \mathfrak
  P(T_0)} T_\mathcal Q\right)$. Obviously, $\bigvee \mathfrak P_T
  = \bigvee \mathfrak P_E$. Define a function $y^s:E\to Y_+^k$ by
  \[
  y^s(t) := \left\{
  \begin{array}{ll}
  h^s(t), & \mbox{if $t\in R$;}\\[0.5em]
  a^s(t), & \mbox{otherwise.}
  \end{array}
  \right.
  \]
  Note that $V_t^s(y^s(t))>V_t^s(f^s(t))$ for almost all $t\in E$.
  Furthermore, $\int_E y^s d\mu=\int_E a^s d\mu$. Let $y(t,\cdot)
  =\gamma^{-1}(y^s(t))$ for all $t\in E$. Then, $y(t, \cdot)$ is
  $\bigvee \mathfrak P_E$-measurable and $V_t(y(t, \cdot))>
  V_t (f(t, \cdot))$ for almost all $t\in E$. Moreover, $\int_E
  y(\cdot, \omega)d\mu= \int_E a(\cdot, \omega)d\mu$ for all
  $\omega\in \Omega$. Thus, $f$ is also $NY$-fine blocked by $E$
  via $y$. This contradicts with the fact that $f$ is in the weak
  fine core of $\mathcal E$. Consequently, $f$ must be in the
  private core of $\mathcal E^s$.
  \end{proof}

  \begin{remark}
  A similar conclusion can be derived for atomless economies
  under (A$_1$)-(A$_3$) and (A$_5$) only.
  \end{remark}

%
%


\bibliographystyle{spbasic}      


\end{document}